\newtheorem{theorem}{Theorem}[section]
\newtheorem{prop}[theorem]{Proposition}
\newtheorem{cor}[theorem]{Corollary}
\newtheorem{lemma}[theorem]{Lemma}
\newtheorem{definition}[theorem]{Definition}
\newtheorem{remark}[theorem]{Remark}
\newtheorem{example}[theorem]{Example}
\def\Z{{\mathbb{Z}}}
\def\Q{{\mathbb{Q}}}
\def\N{{\mathbb{N}}}
\def\V{{\mathbb{V}}}
\def\A{{\mathbb{A}}}
\def\PP{{\mathbb{P}}}
\def\T{{\mathbb{T}}}
\def\Y{{\mathbb{Y}}}
\def\I{{\mathbb{I}}}
\def\rank{\hbox{\rm{rank}}}
\def\Spec{\hbox{\rm{Spec}}}
\def\Span{\hbox{\rm{Span}}}
\def\Proj{\hbox{\rm{Proj}}}
\def\divsor{\hbox{\rm{div}}}
\def\Div{\hbox{\rm{Div}}}
\def\CDiv{\hbox{\rm{CDiv}}}
\def\Cl{\hbox{\rm{Cl}}}
\def\Pic{\hbox{\rm{Pic}}}
\def\Hom{\hbox{\rm{Hom}}}
\def\image{\hbox{\rm{Im}}}
\def\lc{\hbox{\rm{lc}}}
\def\Syz{\hbox{\rm{Syz}}}
\def\Frac{\hbox{\rm{Frac}}}
\def\p{{P[\sigma]}}
\def\D{{\sigma}}
\def\bu{{\mathbf{u}}}
\def\bw{{\mathbf{w}}}
\def\bv{{\mathbf{v}}}
\def\ba{{\mathbf{a}}}
\def\bb{{\mathbf{b}}}
\def\bc{{\mathbf{c}}}
\def\bd{{\mathbf{d}}}
\def\be{{\mathbf{e}}}
\def\bm{{\mathbf{m}}}
\begin{document}

\title{Toric P-Difference Varieties}
\author{Jie Wang}
\address{KLMM, Academy of Mathematics and Systems Science, Chinese Academy of Sciences, Beijing 100190, China}
\email{wangjie212@mails.ucas.ac.cn}
\date{\today}

\subjclass[2000]{Primary 12H10; Secondary 14M25}

\keywords{$\Z[x]$-lattice, affine $P[x]$-semimodule, P-difference variety, toric P-difference variety, diference torus}

\begin{abstract}
In this paper, we introduce the concept of P-difference varieties and study the properties of toric P-difference varieties. Toric P-difference varieties are analogues of toric varieties in difference algebra geometry. The category of affine toric P-difference varieties with toric morphisms is shown to be antiequivalent to the category of affine $P[x]$-semimodules with $P[x]$-semimodule morphisms. Moreover, there is a one-to-one correspondence between the irreducible invariant P-difference subvarieties of an affine toric P-difference variety and the faces of the corresponding affine $P[x]$-semimodule. We also define abstract toric P-difference varieties associated with fans by gluing affine toric P-difference varieties. The irreducible invariant $\p$-subvarieties-faces correspondence is generalized to abstract toric P-difference varieties. By virtue of this correspondence, a divisor theory for abstract toric P-difference varieties is developed.



\end{abstract}

\maketitle

\section{Introduction}
Toric varieties are very interesting objects of study in algebraic geometry since they have deep connections with the theory of polytopes, symplectic geometry and mirror symmetry, and have applications in many other fields such as physics, coding theory, algebraic statistics and geometric modeling. Toric difference varieties are analogues of toric varieties in difference algebra geometry and are first studied by Gao, Huang, Wang, Yuan in \cite{dd-tdv}. Simply speaking, an affine toric difference variety is an affine difference variety which can be parameterized by difference monomials, or equivalently, is an irreducible affine difference variety containing a difference torus as a Cohn open subset such that the group action of the difference torus on itself extends to a difference algebraic group action on the affine difference variety. In \cite{dd-tdv}, many properties of affine toric difference varieties are characterized by using affine $\N[x]$-semimodules. Actually, the category of affine toric difference varieties with toric morphisms is antiequivalent to the category of affine $\N[x]$-semimodules with $\N[x]$-semimodule morphisms.

In algebraic geometry, the divisor theory is a very useful tool to study the properties of algebraic varieties. However, the divisor theory for toric difference varieties defined in \cite{dd-tdv} does not behave well. In this paper, in order to develop a divisor theory for toric difference varieties, we will introduce another generalization of toric varieties in difference algebra geometry, i.e., toric P-difference varieties.

P-difference varieties are generalizations of ordinary difference varieties by admitting variables of the defining difference polynomials with negative degrees in some sense. More concretely, we define an order on $\mathbb{Z}[x]$ as follows: $f=\sum_{i=0}^pa_ix^i>g=\sum_{i=0}^pb_ix^i$ if and only if there exists an integer $k$ such that $a_i=b_i$ for $k+1\le i\le p$ and $a_k>b_k$. Obviously it is a total order on $\mathbb{Z}[x]$ and $f>0$ if and only if $\lc(f)>0$. Denote $P[x]=\{f\in \mathbb{Z}[x]\mid f\ge0\}$. For $g=\sum_{i=0}^sc_ix^i\in\Z[x]$, denote $a^g=\prod_{i=1}^s(\sigma^i(a))^{c_i}$. Assume $k$ is a difference field with the difference operator $\D$. Let $k\{y_1,\ldots,y_m\}^{P[\sigma]}=k[y_1^{P[x]},\ldots,y_m^{P[x]}]$ which is called the $P[\sigma]$-polynomial ring in the difference variables $y_1,\ldots,y_m$ over $k$, where $y_i^{P[x]}:=\{y_i^g\mid g\in P[x]\}$, $i=1,\ldots,m$. An element in $k\{y_1,\ldots,y_m\}^{P[\sigma]}$ is called a P-difference polynomial. An affine P-difference variety over $k$ is the zero sets defined by some P-difference polynomials. Now we can say an affine toric P-difference variety is an affine P-difference variety which can be parameterized by P-difference monomials. As in the algebraic case, there is a difference algebraic group action on an affine toric P-difference variety. That is to say, an affine toric P-difference variety is an irreducible affine P-difference variety containing a difference torus as an open subset such that the action of the difference torus on itself extends to a difference algebraic group action on the affine P-difference variety.

Every affine toric P-difference variety corresponds to an affine $P[x]$-semimodule, i.e., if $X$ is an affine toric P-difference variety, then there exists an affine $P[x]$-semimodule $S$ such that $X=\Spec^{P[\sigma]}(k[S])$. It turns out that many properties of affine toric P-difference varieties can be described using affine $P[x]$-semimodules. Actually, the category of affine toric P-difference varieties with toric morphisms is antiequivalent to the category of affine $P[x]$-semimodules with $P[x]$-semimodule morphisms. Moreover, there is a one-to-one correspondence between the irreducible invariant P-difference subvarieties of an affine toric P-difference variety and the faces of the corresponding affine $P[x]$-semimodule and a one-to-one correspondence between the $T$-orbits of an affine toric difference variety and the faces of the corresponding affine $P[x]$-semimodule.

A fan is defined to be a finite set of affine $P[x]$-semimodules which satisfies certain compatible conditions. We further define the abstract toric P-difference variety associated with a fan by gluing affine toric P-difference varieties along open subsets. As examples, projective toric P-difference varieties defined by using $\Z[x]$-lattice points are all abstract toric P-difference varieties. The irreducible invariant $\p$-subvarieties-faces correspondence still applies to abstract toric P-difference varieties constructed from fans. By virtue of this correspondence, we can define divisors and divisor class modules for toric P-difference varieties. In particular, the class module and the Picard module of a toric P-difference variety are defined. Moreover, we will establish connections between the properties of toric P-difference varieties and divisor class modules.

The rest of this paper is organized as follows.
In Section 2, we list some preliminaries for difference algebra geometry and preliminaries for $\Z[x]$-lattices which will be used in this paper.
In section 3, we introduce the concept of P-difference varieties.
In section 4, affine toric difference varieties are defined and basic properties are proved.
In section 5, projective toric difference varieties are defined and basic properties are proved.
In section 6, we will define abstract toric difference varieties associated with fans and prove their basic properties.
In section 7, we will develop a divisor theory for toric P-difference varieties.
Conclusions are given in Section 8.

\section{Preliminaries}
We list some basic notations and results about difference algebraic geometry and $\Z[x]$-lattices in this section. For more details about difference algebraic geometry, please refer to \cite{Hrushovski1, wibmer}. For more details about $\Z[x]$-lattices, please refer to \cite{dd-tdv}.
\subsection{Preliminaries for Difference Algebraic Geometry}
First we recall some basic notions from difference algebra. For more details, please refer to \cite{levin, wibmer}. All rings in this paper will be assumed to be commutative and unital.

A {\em difference ring} or {\em $\sigma$-ring} for short $(R,\sigma)$, is a ring $R$ together with a ring endomorphism $\sigma\colon R\rightarrow R$. If $R$ is a field, then we call it a {\em difference field}, or a {\em $\sigma$-field} for short. We usually omit $\sigma$ from the notation, simply refer to $R$ as a $\sigma$-ring or a $\sigma$-field. A {\em morphism between $\sigma$-rings} $R$ and $S$ is a ring homomorphism $\psi\colon R\rightarrow S$ which commutes with $\D$. In this paper, all $\sigma$-fields will be assumed to be of characteristic $0$.

Let $k$ be a $\sigma$-field. A $k$-algebra $R$ is called a {\em $k$-$\sigma$-algebra} if the algebra structure map $k\rightarrow R$ is a morphism of $\sigma$-rings. A {\em morphism of $k$-$\sigma$-algebras} is a morphism of $k$-algebras which is also a morphism of $\sigma$-rings. A $k$-subalgebra of a $k$-$\sigma$-algebra is called a {\em $k$-$\sigma$-subalgebra} if it is closed under $\sigma$. If a $k$-$\sigma$-algebra is a $\sigma$-field, then it is called a {\em $\sigma$-field extension} of $k$. Let $R$ and $S$ be two $k$-$\sigma$-algebras. Then $R\otimes_k S$ is naturally a $k$-$\sigma$-algebra by defining $\sigma(r\otimes s)=\sigma(r)\otimes \sigma(s)$ for $r\in R$ and $s\in S$.

Let $k$ be a $\sigma$-field and $R$ a $k$-$\sigma$-algebra. For a subset $A$ of $R$, the smallest $k$-$\sigma$-subalgebra of $R$ containing $A$ is denoted by $k\{A\}$. If there exists a finite subset $A$ of $R$ such that $R=k\{A\}$, we say that $R$ is {\em finitely $\sigma$-generated} over $k$. If additionally $R$ is a $\sigma$-field, the smallest $k$-$\sigma$-subfield of $R$ containing $A$ is denoted by $k\langle A\rangle$.

Now we introduce the following useful notation. Let $x$ be an algebraic indeterminate and $p=\sum_{i=0}^s c_i x^i\in\Z[x]$. For $a$ in a $\sigma$-field $K$, denote $a^p = \prod_{i=0}^s (\sigma^i(a))^{c_i}$. It is easy to check that for $p, q\in\Z[x], a^{p+q}=a^{p} a^{q}, a^{pq}= (a^{p})^{q}$.

Let $k$ be a $\sigma$-field. Suppose $y=\{y_1,\ldots,y_m\}$ is a set of $\sigma$-indeterminates over $k$. Then the {\em $\sigma$-polynomial ring} over $k$ in $y$ is the polynomial ring in the variables $y,\sigma(y),\sigma^2(y),\ldots$. It is denoted by $k\{y_1,\ldots,y_m\}$ and has a natural $k$-$\sigma$-algebra structure. An element in $k\{y_1,\ldots,y_m\}$ is called a {\em $\sigma$-polynomial} over $k$. A {\em $\sigma$-polynomial ideal}, or simply a {\em $\sigma$-ideal}, $I$ in $k\{y_1,\ldots,y_m\}$ is an algebraic ideal which is closed under $\sigma$, i.e.\ $\sigma(I)\subset I$. If $I$ also has the property that $\sigma(a)\in I$ implies $a\in I$, it is called a {\em reflexive $\sigma$-ideal}. A {\em $\sigma$-prime ideal} is a reflexive $\sigma$-ideal which is prime as an algebraic ideal. A $\sigma$-ideal $I$ is called {\em perfect} if for any $g\in\N[x]\setminus\{0\}$ and $a\in k\{y_1,\ldots,y_m\}$, $a^g\in I$ implies $a\in I$. It is easy to prove every $\sigma$-prime ideal is perfect. If $S$ is a finite set of $\sigma$-polynomials in $k\{y_1,\ldots,y_m\}$, we use $(S)$, $[S]$, and $\{S\}$ to denote the algebraic ideal, the $\sigma$-ideal, and the perfect $\sigma$-ideal generated by $S$ respectively.

Let $k$ be a $\sigma$-field. We denote the category of $\sigma$-field extensions of $k$ by $\mathscr{E}_k$ and the category of $K^n$ by $\mathscr{E}_k^n$ where $K \in\mathscr{E}_k$. Let $F\subseteq k\{y_1,\ldots,y_m\}$ be a set of $\sigma$-polynomials. For any $K \in\mathscr{E}_k$, define the solutions of $F$ in $K$ to be
$$\mathbb{V}_K(F):=\{a\in K^n\mid f(a)=0 \textrm{ for all } f\in F\}.$$
Note that $K\rightsquigarrow \V_K(F)$ is naturally a functor from the category of $\sigma$-field extensions of $k$ to the category of sets. Denote this functor by $\V(F)$.
\begin{definition}
Let $k$ be a $\sigma$-field. An {\em (affine) difference variety} or {\em $\sigma$-variety} over $k$ is a functor $X$ from the category of $\sigma$-field extensions of $k$ to the category of sets which is of the form $\mathbb{V}(F)$ for some subset $F$ of $k\{y_1,\ldots,y_m\}$. In this situation, we say that $X$ is the (affine) $\sigma$-variety defined by $F$.
\end{definition}

If no confusion is caused, we will omit the word ``affine'' for simplicity.

The functor $\mathbb{A}_k^m$ given by $\mathbb{A}_k^m(K)=K^m$ for $K\in \mathscr{E}_k$ is called the {\em $\sigma$-affine ($m$-)space} over $k$. Obviously, $\mathbb{A}_k^m=\mathbb{V}(0)$ is an affine $\sigma$-variety over $k$.

By definition, a {\em morphism} $\phi\colon X\rightarrow Y$ of $\sigma$-varieties consists of maps $\phi_K\colon X(K)\rightarrow Y(K)$ for any $K \in\mathscr{E}_k$.
If $X$ and $Y$ are two $\sigma$-varieties over $k$, then we write $X\subseteq Y$ to indicate that $X$ is a subfunctor of $Y$. This simply means that $X(K)\subseteq Y(K)$ for every $K\in \mathscr{E}_k$. In this situation, we also say that $X$ is a {\em $\sigma$-subvariety} of $Y$.

Let $X$ be a $\sigma$-subvariety of $\mathbb{A}_k^m$. Then
\[\mathbb{I}(X):=\{f\in k\{y_1,\ldots,y_m\}\mid f(a)=0\textrm{ for all }a\in X(K) \textrm{ and all }K\in \mathscr{E}_k\}\]
is called the {\em vanishing ideal} of $X$.
It is well known that $\sigma$-subvarieties of $\mathbb{A}_k^m$ are in a one-to-one correspondence with perfect $\sigma$-ideals of $k\{y_1,\ldots,y_m\}$ and we have $\I(\V(F))=\{F\}$ for $F\subseteq k\{y_1,\ldots,y_m\}$.

\begin{definition}
Let $X$ be a $\sigma$-subvariety of $\mathbb{A}_k^m$. Then the $k$-$\sigma$-algebra
\[k\{X\}:=k\{y_1,\ldots,y_m\}/\mathbb{I}(X)\]
is called the {\em $\sigma$-coordinate ring} of $X$.
\end{definition}

A $k$-$\sigma$-algebra is called an {\em affine $k$-$\sigma$-algebra} if it is isomorphic to $k\{y_1,\ldots,y_m\}/\mathbb{I}(X)$ for some affine $\D$-variety $X$. Then by definition, $k\{X\}$ is an affine $k$-$\sigma$-algebra.

The following lemma is taken from \cite[Remark 2.1.10]{wibmer}.
\begin{lemma}\label{pdag-lemma1}
Let $X$ be a $k$-$\sigma$-variety. Then for any $K\in \mathscr{E}_k$, there is a natural bijection between $X(K)$ and the set of $k$-$\sigma$-algebra homomorphisms from $k\{X\}$ to $K$. Indeed,
$$X\simeq \Hom(k\{X\},\underline{\mspace{18mu}})$$
as functors.
\end{lemma}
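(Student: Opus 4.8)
The plan is to write down the bijection explicitly via evaluation of $\sigma$-polynomials and then verify that it is natural in $K$. Write $X = \V(F)$ as a $\sigma$-subvariety of $\A_k^m$ for some $F\subseteq k\{y_1,\ldots,y_m\}$, so that $X(K)=\V_K(F)\subseteq K^m$ and $k\{X\}=k\{y_1,\ldots,y_m\}/\I(X)$ with $\I(X)=\{F\}$; let $\pi\colon k\{y_1,\ldots,y_m\}\to k\{X\}$ be the quotient map. The tool underlying everything is the $\sigma$-equivariant universal property of the $\sigma$-polynomial ring: for any $k$-$\sigma$-algebra $R$, sending a $k$-$\sigma$-algebra homomorphism $\varphi\colon k\{y_1,\ldots,y_m\}\to R$ to the tuple $(\varphi(y_1),\ldots,\varphi(y_m))$ is a bijection onto $R^m$, the inverse sending $a=(a_1,\ldots,a_m)$ to the evaluation homomorphism $\varepsilon_a$ with $y_i\mapsto a_i$. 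The one point that genuinely needs care is that $\varepsilon_a$ really is $\sigma$-equivariant and not merely a ring homomorphism; this holds because $k\{y_1,\ldots,y_m\}$ is the polynomial ring in the algebraically independent variables $\sigma^j(y_i)$ with $\sigma$ acting by the shift $\sigma^j(y_i)\mapsto\sigma^{j+1}(y_i)$, so that $\varepsilon_a(\sigma(f))=\sigma(\varepsilon_a(f))$ once $\sigma^j(y_i)\mapsto\sigma^j(a_i)$ is imposed. I would record this as the first step.

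Next I would construct the two maps. Given $K\in\mathscr{E}_k$ and $a\in X(K)$, the homomorphism $\varepsilon_a\colon k\{y_1,\ldots,y_m\}\to K$ vanishes on $\I(X)$ by the very definition of the vanishing ideal, hence factors uniquely through $\pi$ as a $k$-$\sigma$-algebra homomorphism $\overline{\varepsilon}_a\colon k\{X\}\to K$; this gives $\Phi_K\colon X(K)\to\Hom(k\{X\},K)$. Conversely, given $\psi\in\Hom(k\{X\},K)$, the composite $\psi\circ\pi$ is a $k$-$\sigma$-algebra homomorphism out of $k\{y_1,\ldots,y_m\}$, hence equals $\varepsilon_a$ for $a:=\big((\psi\circ\pi)(y_1),\ldots,(\psi\circ\pi)(y_m)\big)\in K^m$ by the universal property; and since $\pi(f)=0$ for every $f\in F\subseteq\I(X)$, we get $f(a)=\psi(\pi(f))=0$, so $a\in\V_K(F)=X(K)$. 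This gives $\Psi_K\colon\Hom(k\{X\},K)\to X(K)$.

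Finally I would check that $\Phi_K$ and $\Psi_K$ are mutually inverse and that $\Phi=(\Phi_K)_K$ is natural. For the first, $\Psi_K(\Phi_K(a))$ has $i$-th coordinate $\overline{\varepsilon}_a(\pi(y_i))=\varepsilon_a(y_i)=a_i$, so it recovers $a$; and $\Phi_K(\Psi_K(\psi))$ and $\psi$ become equal after precomposition with the surjection $\pi$ (both restrict to $\varepsilon_a$ on $k\{y_1,\ldots,y_m\}$), hence are equal. For naturality, a morphism $\rho\colon K\to L$ in $\mathscr{E}_k$ induces $X(\rho)\colon X(K)\to X(L)$ by coordinatewise application of $\rho$, and $\varepsilon_{\rho(a)}=\rho\circ\varepsilon_a$ on $k\{y_1,\ldots,y_m\}$ forces $\overline{\varepsilon}_{\rho(a)}=\rho\circ\overline{\varepsilon}_a$ on $k\{X\}$, which is precisely the commutativity of the square expressing $\Phi_L\circ X(\rho)=(\rho\circ{-})\circ\Phi_K$; thus $\Phi$ is a natural isomorphism $X\xrightarrow{\sim}\Hom(k\{X\},\underline{\mspace{18mu}})$. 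Apart from the $\sigma$-equivariant universal property flagged in the first step, every step here is a routine verification.
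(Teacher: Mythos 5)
Your proof is correct and follows the same standard approach the paper implicitly relies on (the paper simply cites \cite[Remark 2.1.10]{wibmer} here, and gives the same one-line sketch for the $P[\sigma]$-analogue in Lemma \ref{apv-lemma1}): identify points with evaluation homomorphisms via the universal property of the $\sigma$-polynomial ring, check that vanishing on $\I(X)$ is equivalent to landing in $X(K)$, and verify naturality. You have merely filled in the routine details — $\sigma$-equivariance of evaluation, mutual inversity, and the naturality square — that the cited source takes for granted.
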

\begin{definition}
Let $X\subseteq \mathbb{A}_k^m$ and $Y\subseteq \mathbb{A}_k^n$ be $k$-$\sigma$-varieties. A morphism of functors $f\colon X\rightarrow Y$ is called a {\em morphism of $k$-$\sigma$-varieties} if there exist $\sigma$-polynomials $f_1,\ldots,f_n\in k\{y_1,\ldots,y_m\}$ such that $f(a)=(f_1(a),\ldots,f_n(a))$ for every $a\in X(K)$ and all $K\in \mathscr{E}_k$.
\end{definition}

Similarly to affine algebraic varieties, we have
\begin{theorem}
Let $k$ be a $\sigma$-field. The category of affine $k$-$\sigma$-varieties is antiequivalent to the category of affine $k$-$\sigma$-algebras.
\end{theorem}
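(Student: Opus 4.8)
The plan is to show that the assignment $X\mapsto k\{X\}$ extends to a contravariant functor which is fully faithful and essentially surjective; by the standard categorical criterion this is precisely what it means for the category of affine $k$-$\sigma$-varieties to be antiequivalent to the category of affine $k$-$\sigma$-algebras, and a quasi-inverse is then obtained by choosing presentations.

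First I would make the functor precise on morphisms. Let $f\colon X\to Y$ be a morphism of $k$-$\sigma$-varieties with $X\subseteq\A_k^m$, $Y\subseteq\A_k^n$, realized by $\sigma$-polynomials $f_1,\ldots,f_n\in k\{y_1,\ldots,y_m\}$, and let $z_1,\ldots,z_n$ be the coordinate functions on $\A_k^n$. For $g\in\I(Y)$ and $a\in X(K)$ one has $g(f_1(a),\ldots,f_n(a))=g(f(a))=0$ since $f(a)\in Y(K)$; hence $g(f_1,\ldots,f_n)\in\I(X)$, so $z_i\mapsto\overline{f_i}\in k\{X\}$ induces a $k$-$\sigma$-algebra homomorphism $f^{*}\colon k\{Y\}\to k\{X\}$. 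Independence of the chosen $f_i$ (two choices differ by elements of $\I(X)$) and the functor axioms $(\mathrm{id}_X)^{*}=\mathrm{id}_{k\{X\}}$ and $(g\circ f)^{*}=f^{*}\circ g^{*}$ are immediate.

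Essential surjectivity is the very definition of an affine $k$-$\sigma$-algebra. For faithfulness, if $f,g\colon X\to Y$ have $f^{*}=g^{*}$ then $\overline{f_i}=\overline{g_i}$, i.e.\ $f_i-g_i\in\I(X)$, so $f_i(a)=g_i(a)$ for all $a\in X(K)$ and all $K\in\mathscr{E}_k$, whence $f=g$. The heart of the argument is fullness: given a $k$-$\sigma$-algebra homomorphism $\psi\colon k\{Y\}\to k\{X\}$, lift each $\psi(\overline{z_i})$ to some $f_i\in k\{y_1,\ldots,y_m\}$ and put $f=(f_1,\ldots,f_n)$. For $g\in\I(Y)$, reducing modulo $\I(X)$ and using that $\psi$ fixes $k$ and commutes with $\sigma$ gives $g(f_1,\ldots,f_n)\equiv g(\psi(\overline{z_1}),\ldots,\psi(\overline{z_n}))=\psi\big(g(\overline{z_1},\ldots,\overline{z_n})\big)=\psi(0)=0$ in $k\{X\}$, so $g(f_1,\ldots,f_n)\in\I(X)$; therefore $f(a)\in\mathbb{V}_K(\I(Y))$ for every $a\in X(K)$. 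Invoking the one-to-one correspondence between $\sigma$-subvarieties of $\A_k^n$ and perfect $\sigma$-ideals, namely $\V(\I(Y))=Y$, we conclude $f(X)\subseteq Y$, so $f$ is a morphism of $k$-$\sigma$-varieties, and by construction $f^{*}=\psi$.

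It remains to record the quasi-inverse: send an affine $k$-$\sigma$-algebra $A$ to $\V(I)$ for any chosen presentation $A\cong k\{y_1,\ldots,y_m\}/I$; since $A$ is affine the ideal $I$ is perfect, hence $\I(\V(I))=\{I\}=I$ and $k\{\V(I)\}\cong A$, while fullness guarantees independence from the presentation up to canonical isomorphism. The only genuinely difference-algebraic input is the perfect-$\sigma$-ideal/$\sigma$-variety correspondence used in the fullness step (together with the fact that $\V(F)$ depends only on the perfect $\sigma$-ideal generated by $F$); everything else transcribes the classical affine case verbatim, and Lemma~\ref{pdag-lemma1} provides an alternative Yoneda-style route to the same conclusion. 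I expect this correspondence step to be the main, though quite mild, obstacle.
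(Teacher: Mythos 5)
Your proof is correct. The paper itself does not spell out an argument here; it simply cites Wibmer~[2.1] and moves on. Your write-up is the standard argument for such an antiequivalence, and it is in fact the same argument the paper later \emph{does} spell out for the $P[\sigma]$ analogue in Section~3 (the antiequivalence between affine $k$-$P[\sigma]$-varieties and affine $k$-$P[\sigma]$-algebras): there, too, injectivity and surjectivity of $\Hom(X,Y)\to\Hom(k\{Y\},k\{X\})$, $f\mapsto f^*$, are verified exactly as you do. One minor observation: for the fullness step you only need the elementary Galois-connection identity $\V(\I(Y))=Y$, which already holds because $Y=\V(F)\Rightarrow F\subseteq\I(Y)\Rightarrow\V(\I(Y))\subseteq\V(F)=Y$; invoking the full perfect-ideal/$\sigma$-subvariety bijection is harmless but more than is required.
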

\begin{proof}
Please refer to \cite[2.1]{wibmer}.
\end{proof}

Suppose $X$ is an affine $k$-$\sigma$-variety. Let $\Spec^{\sigma}(k\{X\})$ be the set of all $\sigma$-prime ideals of $k\{X\}$. Let $F\subseteq k\{X\}$. Set
$$\mathcal{V}(F):=\{\mathfrak{p}\in \Spec^{\sigma}(k\{X\})\mid F\subseteq \mathfrak{p}\}\subseteq \Spec^{\sigma}(k\{X\}).$$
Obviously, $\mathcal{V}(F)=\mathcal{V}(\{F\})$. It can be checked that $\Spec^{\sigma}(k\{X\})$ is a topological space with closed sets of forms $\mathcal{V}(F)$. Then the {\em topological space} of $X$ is $\Spec^{\sigma}(k\{X\})$ equipped with the above Cohn topology.

Let $k$ be a $\sigma$-field and $F\subseteq k\{y_1,\ldots,y_m\}$. Let $K,L\in\mathscr{E}_k$. Two solutions $a\in\V_K(F)$ and $b\in\V_L(F)$ are called {\em equivalent} if there exists a $k$-$\sigma$-isomorphism between $k\langle a\rangle$ and $k\langle b\rangle$ which maps $a$ to $b$. Obviously this defines an equivalence relation.
The following theorem gives a relationship between equivalence classes of solutions of $I$ and $\sigma$-prime ideals containing $I$. For the proof, please refer to \cite[p.31]{wibmer}.
\begin{theorem}\label{pdag-thm}
Let $X$ be a $k$-$\sigma$-variety. There is a natural bijection between the set of equivalence classes of solutions of $\mathbb{I}(X)$ and $\Spec^{\sigma}(k\{X\})$.
\end{theorem}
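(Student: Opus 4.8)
The plan is to construct the bijection from the evaluation homomorphisms attached to solutions, and then to verify the three standard things: that it is well defined on equivalence classes, injective, and surjective. Fix a presentation $X=\V(F)\subseteq\mathbb{A}_k^m$, so $k\{X\}=k\{y_1,\ldots,y_m\}/\I(X)$ and $X(K)=\V_K(\I(X))$ for every $K\in\mathscr{E}_k$. Given a solution $a\in X(K)$, I would consider the $k$-$\sigma$-algebra homomorphism $\varepsilon_a\colon k\{y_1,\ldots,y_m\}\to K$ sending $y_i\mapsto a_i$; its image is $k\{a\}$, and it factors through $k\{X\}$ because $a$ kills $\I(X)$ (this is the correspondence of Lemma \ref{pdag-lemma1}). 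Set $\mathfrak p_a:=\ker\varepsilon_a\supseteq\I(X)$. Then $\mathfrak p_a$ is prime since $k\{a\}$, a subring of the field $K$, is a domain; and it is reflexive since $\sigma$, being a field endomorphism of $K$, is injective, so $\sigma(f)\in\mathfrak p_a$ gives $\sigma(\varepsilon_a(f))=\varepsilon_a(\sigma(f))=0$, hence $\varepsilon_a(f)=0$ and $f\in\mathfrak p_a$. Thus $\mathfrak p_a/\I(X)\in\Spec^{\sigma}(k\{X\})$; this is the map $\Phi$ I want, and it visibly uses no choices, which is the content of ``natural''.

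Next I would check that $\Phi$ is constant on equivalence classes and injective. If $a\in X(K)$ and $b\in X(L)$ are equivalent via a $k$-$\sigma$-isomorphism $\psi\colon k\langle a\rangle\to k\langle b\rangle$ with $\psi(a)=b$, then $\psi$ restricts to an isomorphism $k\{a\}\to k\{b\}$ and $\varepsilon_b=\psi\circ\varepsilon_a$, so $\mathfrak p_a=\ker\varepsilon_a=\ker\varepsilon_b=\mathfrak p_b$; hence $\Phi$ descends to a map $\overline\Phi$ on equivalence classes. Conversely, if $\mathfrak p_a=\mathfrak p_b=:\mathfrak p$, then $\varepsilon_a$ and $\varepsilon_b$ induce $k$-$\sigma$-isomorphisms $k\{y\}/\mathfrak p\cong k\{a\}$ and $k\{y\}/\mathfrak p\cong k\{b\}$ matching $a$ with $b$. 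Since $\sigma$ is injective on the fields $K,L$, it is injective on $k\{a\},k\{b\}$, and $\Frac(k\{a\})$ is closed under $\sigma$, so $k\langle a\rangle=\Frac(k\{a\})$ and likewise for $b$. As an isomorphism of domains extends uniquely to one of their fraction fields and automatically commutes with $\sigma$, composing yields the $k$-$\sigma$-isomorphism witnessing $a\sim b$; hence $\overline\Phi$ is injective.

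For surjectivity I would start from $\mathfrak q\in\Spec^{\sigma}(k\{X\})$, pull it back to a $\sigma$-prime ideal $\mathfrak p\supseteq\I(X)$ of $k\{y_1,\ldots,y_m\}$, and set $R:=k\{y_1,\ldots,y_m\}/\mathfrak p$. Reflexivity of $\mathfrak p$ is exactly the assertion that $\sigma$ is injective on the domain $R$, so $\sigma$ extends uniquely to $K:=\Frac(R)$ and makes it a $\sigma$-field; since $k$ is a field, $k\to R\to K$ is injective, so $K\in\mathscr{E}_k$. Letting $a$ be the image of $(y_1,\ldots,y_m)$ in $K^m$, every $f\in\I(X)\subseteq\mathfrak p$ satisfies $f(a)=0$, so $a\in\V_K(\I(X))=X(K)$, and $\mathfrak p_a=\ker\varepsilon_a=\mathfrak p$ because $R$ embeds in $K$; thus $\Phi(a)=\mathfrak q$. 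This shows $\overline\Phi$ is a bijection, and the same description of $\Phi$ shows it is compatible with pulling back $\sigma$-prime ideals along the homomorphism $k\{X'\}\to k\{X\}$ attached to a morphism $X\to X'$, which is the naturality statement.

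I do not expect a genuine obstacle: the argument is entirely a dictionary between solutions, evaluation maps, and reflexive $\sigma$-prime ideals. The points requiring care — and where a reader should be alert — are the standard $\sigma$-algebra inputs used as black boxes: a field endomorphism is injective (needed both for reflexivity of $\mathfrak p_a$ and for extending $\sigma$ to a fraction field), the $\sigma$-field $k\langle a\rangle$ coincides with $\Frac(k\{a\})$, and reflexivity of a $\sigma$-prime ideal is equivalent to injectivity of $\sigma$ on the corresponding quotient $\sigma$-domain. These are precisely what is packaged in \cite[p.\ 31]{wibmer}.
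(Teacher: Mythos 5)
Your proof is correct. For this theorem the paper gives no argument of its own; it simply defers to \cite[p.\ 31]{wibmer}, and what you have written is the standard dictionary that reference packages: a solution $a$ gives the $\sigma$-prime kernel $\mathfrak p_a=\ker\varepsilon_a$, primeness comes from $k\{a\}$ being a subring of a field, reflexivity from injectivity of a field endomorphism, and conversely a $\sigma$-prime $\mathfrak p$ gives the generic solution in $\Frac\bigl(k\{y\}/\mathfrak p\bigr)$ with $\sigma$ extended, the two directions matching via $k\langle a\rangle=\Frac(k\{a\})$. All the auxiliary facts you flag (injectivity of $\sigma$ on a $\sigma$-field, reflexivity of $\mathfrak p$ being equivalent to injectivity of $\sigma$ on the quotient domain, unique $\sigma$-compatible extension to fraction fields) are exactly right, so this is a faithful, self-contained filling-in of the cited proof rather than a different route.
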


Because of Theorem \ref{pdag-thm}, we shall not strictly distinguish between a $\sigma$-variety and its topological space. In other words, we use $X$ to mean the $\sigma$-variety or its topological space.


\subsection{Preliminaries for $\Z[x]$-lattices}
A $\Z[x]$-module which can be embedded into $\Z[x]^n$ for some $n$ is called a {\em $\Z[x]$-lattice}. Since $\Z[x]^n$ is Noetherian as a $\Z[x]$-module, we see that any $\Z[x]$-lattice is finitely generated. Let $L$ be a $\Z[x]$-lattice. We always identity it with a $\Z[x]$-submodule of $\Z[x]^n$ for some $n$. Define the {\em rank} of $L$ to be
\[\rank(L):=\dim_{\Q(x)}\Span_{\Q(x)}(L).\]
Note that $L$ may not be a free $\Z[x]$-module, thus the number of minimal generators of $L$ can be larger than its rank.

Sometimes we want to know whether a $\Z[x]$-module is a $\Z[x]$-lattice, i.e.\ whether it can be embedded into $\Z[x]^n$ for some $n$. The following lemma is taken from \cite[p.172]{rotman}:
\begin{lemma}\label{pzl-lemma}
Let $R$ be a domain and $A$ an $R$-module. If $A$ is finitely generated and torsion-free, then $A$ can be imbedded into a finitely generated free $R$-module.
\end{lemma}

Therefore, the condition for a finitely generated $\Z[x]$-module to be a $\Z[x]$-lattice is that it has no torsion.

Suppose $U=\{\bu_1,\ldots,\bu_m\}\subset\Z[x]^n$. The {\em syzygy module} of $U$, which is denoted by $\Syz(U)$, is $$\Syz(U):=\{\bv\in\Z[x]^m\mid U\bv=\mathbf{0}\},$$
where we regard $U$ as a matrix with columns $\bu_i$.

It is clear that $\Syz(U)$ is a $\Z[x]$-lattice in $\Z[x]^m$. Moreover,
\begin{lemma}\label{pzl-lemma2}
$\Syz(U)$ is a free $\Z[x]$-module of rank $m-\rank(U)$.
\end{lemma}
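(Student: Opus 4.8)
The plan is to reduce the statement to the standard structure theory of finitely generated modules over a PID, applied not to $\Z[x]$ itself (which is not a PID) but to the larger ring $\Q(x)$, and then to use $\Z[x]$-lattice-theoretic arguments to descend the rank computation. First I would consider the $\Z[x]$-linear map $\varphi\colon \Z[x]^m\to\Z[x]^n$ given by the matrix $U$, so that $\Syz(U)=\ker\varphi$. Since $\Z[x]^m$ is Noetherian as a $\Z[x]$-module, $\Syz(U)$ is finitely generated, and it is clearly torsion-free as a submodule of the free module $\Z[x]^m$; by Lemma \ref{pzl-lemma} this already shows $\Syz(U)$ is a $\Z[x]$-lattice, so it remains to prove that it is in fact \emph{free} of the asserted rank.

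For the rank count, I would tensor the exact sequence
\[
0\longrightarrow \Syz(U)\longrightarrow \Z[x]^m \stackrel{\varphi}{\longrightarrow} \Z[x]^n
\]
with the fraction field $\Q(x)$, which is flat over $\Z[x]$, to obtain an exact sequence of $\Q(x)$-vector spaces
\[
0\longrightarrow \Syz(U)\otimes_{\Z[x]}\Q(x)\longrightarrow \Q(x)^m \longrightarrow \Q(x)^n.
\]
The image of the last map is the $\Q(x)$-span of the columns of $U$, which has dimension $\rank(U)$ by definition, so the rank–nullity theorem gives $\dim_{\Q(x)}\bigl(\Syz(U)\otimes_{\Z[x]}\Q(x)\bigr)=m-\rank(U)$. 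Since for any $\Z[x]$-lattice $L$ one has $\rank(L)=\dim_{\Q(x)}\Span_{\Q(x)}(L)=\dim_{\Q(x)}(L\otimes_{\Z[x]}\Q(x))$ (the natural map $L\otimes_{\Z[x]}\Q(x)\to\Span_{\Q(x)}(L)$ being an isomorphism because $L$ embeds in a free module and localization is exact), this yields $\rank(\Syz(U))=m-\rank(U)$.

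It remains to establish freeness, which is the one genuinely nontrivial point. I would invoke the Quillen–Suslin theorem: $\Syz(U)$ is a submodule of $\Z[x]^m$, hence torsion-free and therefore (being finitely generated) a second syzygy—more directly, the quotient $\Z[x]^m/\Syz(U)$ embeds in $\Z[x]^n$, so it is torsion-free, hence of projective dimension at most one over the regular ring $\Z[x]$ of global dimension two; thus $\Syz(U)$, being the kernel of a surjection from a free module onto a module of projective dimension $\le 1$, is projective. Since $\Z[x]$ is a polynomial ring over the PID $\Z$, every finitely generated projective $\Z[x]$-module is free by Quillen–Suslin, so $\Syz(U)$ is free, necessarily of rank equal to its $\Q(x)$-dimension $m-\rank(U)$. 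The main obstacle is precisely this freeness step: over a general base one would only get projectivity, and it is the special fact that $\Z[x]$ is a polynomial ring over $\Z$ that makes the statement true as stated; if the paper prefers to avoid Quillen–Suslin it could instead restrict attention to the conclusion "$\Syz(U)$ is a $\Z[x]$-lattice of rank $m-\rank(U)$," for which the tensoring argument above suffices and no freeness input is needed.
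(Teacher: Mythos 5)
Your proof is correct. The paper itself states Lemma \ref{pzl-lemma2} without proof and refers to \cite{dd-tdv} for the theory of $\Z[x]$-lattices, so there is no in-text argument to compare against; I assess your reasoning on its own. The rank computation---tensoring $0\to\Syz(U)\to\Z[x]^m\stackrel{\varphi}{\longrightarrow}\Z[x]^n$ with the flat extension $\Q(x)$ and applying rank--nullity, together with the observation that for a $\Z[x]$-lattice $L$ one has $\Span_{\Q(x)}(L)\simeq L\otimes_{\Z[x]}\Q(x)$---is exactly the standard argument. The freeness step is, as you say, the only genuinely nontrivial point, and your chain of reasoning is sound: $\Z[x]^m/\Syz(U)\cong\image(\varphi)$ embeds in the free module $\Z[x]^n$, hence is torsion-free; since $\Z[x]$ is a two-dimensional regular domain in which every maximal ideal has height $2$, torsion-freeness forces local depth $\geq 1$ and Auslander--Buchsbaum gives projective dimension $\leq 1$; therefore $\Syz(U)$, as the first syzygy of a module of projective dimension $\leq 1$, is projective; and finitely generated projective modules over $\Z[x]$ are free. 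For that last input you cite Quillen--Suslin, which works; the sharper attribution for the one-variable case over a PID is Seshadri's 1958 theorem, but either suffices. One small correction to your closing remark: the fallback of proving only that $\Syz(U)$ is a $\Z[x]$-lattice of rank $m-\rank(U)$ would \emph{not} be adequate for this paper, since freeness of syzygy modules is used essentially in the proofs of Lemma \ref{pi-le} and Lemma \ref{atv-lemma}, and again in Section 7 where explicit $\Z[x]$-bases of such modules are chosen; so some input at the strength of Seshadri or Quillen--Suslin is unavoidable here.
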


A $\Z[x]$-lattice $L\subseteq \Z[x]^{m}$ is said to be {\em toric} if it is $\Z[x]$-saturated, that is for any nonzero $g\in\Z[x]$ and $\bu \in\Z[x]^{m}$, $g\bu \in L$ implies $\bu \in L$.
\begin{remark}\label{pi-re1}
If $U=\{\bu_1,\ldots,\bu_m\}\subset \Z[x]^n$, then the syzygy module $L$ of $U$ is obviously $\Z[x]$-saturated and hence toric.
\end{remark}

For a $\Z[x]$-lattice $L\subseteq \Z[x]^m$, let $$L^C:=\{\bu\in \Z[x]^m\mid\langle \bu,\bv\rangle=0, \forall\bv\in L\},$$ where $\langle \bu,\bv\rangle=\bu^{\tau}\bv$ is the dot product of $\bu$ and $\bv$. By Lemma \ref{pzl-lemma2}, $L^C$ is a free $\Z[x]$-module and of rank $m-\rank(L)$.
\begin{remark}\label{pi-re2}
For a toric $\Z[x]$-lattice, one can check that $(L^C)^C=L$.
\end{remark}

For $\bu=(u_1,\ldots,u_m)\in \Z[x]^m$, we denote $\Y^{\bu}=\prod_{i=1}^my_i^{u_i}$. $\Y^{\bu}$ is called a {\em $\sigma$-monomial} in $\Y$ and ${\bu}$ is called its {\em support}.
\begin{definition}
Given a $\Z[x]$-lattice $L\subseteq \Z[x]^m$, we define a binomial $\sigma$-ideal $I_L\subseteq k\{y_1,\ldots,y_m\}$ associated with $L$:
\[I_L:=[\Y^{\bu^+}-\Y^{\bu^-}\mid\bu\in L]=[\Y^{\bu}-\Y^{\bv}\mid \bu,\bv\in \N[x]^m \textrm{ with } \bu-\bv\in L],\]
where $\bu^{+}, \bu^{-}\in \N[x]^m$ are the
positive part and the negative part of $\bu=\bu^{+}-\bu^{-}$, respectively. $L$ is called the {\em support lattice} of $I_L$.
If $L$ is toric, then the corresponding $\Z[x]$-lattice ideal $I_L$ is called a {\em toric $\sigma$-ideal}.
\end{definition}

The following two lemmas will be used later.
\begin{lemma}\label{pi-le}
Let $M$ be a $\Z[x]$-lattice. Then $N=\Hom_{\Z[x]}(M,\Z[x])$ is a free $\Z[x]$-module and has the same rank with $M$.
\end{lemma}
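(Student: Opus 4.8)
The plan is to reduce the statement to a computation with a presentation matrix of $M$, then invoke the fact that the dual (syzygy-type) module is free by Lemma \ref{pzl-lemma2}. First I would fix a finite generating set of $M$; since $M$ is a $\Z[x]$-lattice it is finitely generated (as noted after the definition of $\Z[x]$-lattice, because $\Z[x]^n$ is Noetherian as a $\Z[x]$-module), say $M$ is generated by $m$ elements. Choosing such generators gives a surjection $\pi\colon\Z[x]^m\twoheadrightarrow M$ whose kernel $K$ is itself a $\Z[x]$-lattice, hence finitely generated; pick generators of $K$ and assemble them as the columns of a matrix $A\in\Z[x]^{m\times r}$, so that we have an exact sequence $\Z[x]^r\xrightarrow{A}\Z[x]^m\xrightarrow{\pi}M\to 0$.

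Next I would apply the contravariant functor $\Hom_{\Z[x]}(-,\Z[x])$, which is left exact, to this presentation. This yields an exact sequence
\[
0\to \Hom_{\Z[x]}(M,\Z[x])\xrightarrow{\pi^{*}}\Hom_{\Z[x]}(\Z[x]^m,\Z[x])\xrightarrow{A^{*}}\Hom_{\Z[x]}(\Z[x]^r,\Z[x]),
\]
and after the canonical identifications $\Hom_{\Z[x]}(\Z[x]^m,\Z[x])\cong\Z[x]^m$ and $\Hom_{\Z[x]}(\Z[x]^r,\Z[x])\cong\Z[x]^r$, the map $A^{*}$ becomes multiplication by the transpose $A^{\tau}$. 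Hence $N=\Hom_{\Z[x]}(M,\Z[x])$ is identified with $\ker(A^{\tau})=\Syz(\text{rows of }A)\subseteq\Z[x]^m$, the syzygy module of the columns of $A^{\tau}$. By Remark \ref{pi-re1} this is $\Z[x]$-saturated, and by Lemma \ref{pzl-lemma2} it is a free $\Z[x]$-module of rank $m-\rank(A^{\tau})=m-\rank(A)$. Since $\rank(A)=m-\rank(M)$ from the exactness of the original presentation (tensoring with the fraction field $\Q(x)$, which is flat, gives $\dim_{\Q(x)}(M\otimes\Q(x))=m-\rank A$, and $\dim_{\Q(x)}(M\otimes\Q(x))=\rank(M)$ because $M$ is torsion-free), we conclude $N$ is free of rank $m-(m-\rank M)=\rank M$.

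The only genuinely delicate point is keeping the bookkeeping between $M$ and its Matlis-type dual honest: $M$ need not be free (as the paper explicitly warns, the number of minimal generators can exceed the rank), so one cannot simply dualize a basis, and one must be careful that the identification $\Hom_{\Z[x]}(\Z[x]^m,\Z[x])\cong\Z[x]^m$ is the one under which $A^{*}$ is literally $A^{\tau}$, up to a harmless transpose. Once that is set up, everything else is formal: left-exactness of $\Hom$, flatness of $\Q(x)$ over $\Z[x]$ for the rank count, and the already-established freeness of syzygy modules. I would present the rank computation last, since it is where the hypothesis that $M$ is a lattice (equivalently, torsion-free) is actually used.
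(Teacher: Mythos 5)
Your proof is correct and is essentially the same as the paper's. The paper maps $\varphi\in N$ to $(\varphi(\bu_1),\dots,\varphi(\bu_m))\in\Z[x]^m$ and identifies the image with $L^C$ where $L=\ker(\Z[x]^m\to M)$; you instead dualize a free presentation $\Z[x]^r\xrightarrow{A}\Z[x]^m\to M\to0$ and identify $N$ with $\ker(A^\tau)$. Since the columns of $A$ generate $L$, $\ker(A^\tau)=L^C$, so the two identifications coincide, and both conclude with the same appeal to Lemma~\ref{pzl-lemma2} and the same rank count over $\Q(x)$.
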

\begin{proof}
Suppose $M=\Z[x](\{\bu_1,\ldots,\bu_m\})\in\Z[x]^n$. Define a map
$$\theta\colon\Z[x]^m\longrightarrow M, \be_i\longmapsto \bu_i,$$
where $\{\be_i\}_{i=1}^m$ is the standard basis of $\Z[x]^m$. Let $L=\ker(\theta)$. By Lemma \ref{pzl-lemma2}, $\rank(L)=m-\rank(M)$. Define a map
$$\alpha\colon N\rightarrow \Z[x]^m, \alpha(\varphi)=(\varphi(\bu_1),\ldots,\varphi(\bu_m)), \forall\varphi\in N.$$
It is easy to see that $\alpha$ is an embedding and the image of $\alpha$ is $L^C$ which implies $N\simeq L^C$. Hence $N$ is free and $\rank(N)=m-\rank(L)=m-(m-\rank(M))=\rank(M)$ by Lemma \ref{pzl-lemma2}.
\end{proof}

For $\bv\in \Z[x]^n$, we define a $\Z[x]$-module homomorphism $\varphi_{\bv}\colon M\rightarrow \Z[x]$ by $\varphi_{\bv}(\bu)=\langle \bu,\bv\rangle$, for all $\bu\in M$. So $\varphi_{\bv}\in N$ and we get a map $\theta\colon \Z[x]^n\rightarrow N, \bv\mapsto \varphi_{\bv}$.
\begin{lemma}\label{atv-lemma}
For any $\varphi\in N$, there exists $g\in \Z[x]$ such that $g\varphi=\varphi_{\bv}$ for some $\bv\in \Z[x]^n$.
\end{lemma}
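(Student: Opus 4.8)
The plan is to extend scalars from $\Z[x]$ to its fraction field $\Q(x)$, where the standard dot product becomes a non-degenerate bilinear form so that every linear functional is represented by a vector, and then to clear denominators.

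First I would fix the embedding $M\subseteq \Z[x]^n$ and set $V:=\Span_{\Q(x)}(M)\subseteq \Q(x)^n$. Since $M$ is a $\Z[x]$-lattice it is torsion-free, so $M\to V$ is injective, and taking common denominators shows that every element of $V$ can be written as $\tfrac1q\bu$ with $q\in\Z[x]\setminus\{0\}$ and $\bu\in M$. I would then extend $\varphi\in N$ to a map $\tilde\varphi\colon V\to\Q(x)$ by $\tilde\varphi(\tfrac1q\bu):=\tfrac1q\varphi(\bu)$. The only point needing care is that this is well defined: if $\tfrac1q\bu=\tfrac1{q'}\bu'$ with $\bu,\bu'\in M$, then $q'\bu=q\bu'$ in $M$, so applying $\varphi$ and using its $\Z[x]$-linearity gives $q'\varphi(\bu)=q\varphi(\bu')$, i.e.\ $\tfrac1q\varphi(\bu)=\tfrac1{q'}\varphi(\bu')$; $\Q(x)$-linearity of $\tilde\varphi$ follows similarly. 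This is exactly the place where torsion-freeness of $M$ is used.

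Next, since $\langle\cdot,\cdot\rangle$ on $\Q(x)^n$ is a non-degenerate symmetric bilinear form, $\bw\mapsto\langle\cdot,\bw\rangle$ is an isomorphism of $\Q(x)^n$ onto its dual $(\Q(x)^n)^{*}$. I would extend $\tilde\varphi$ arbitrarily from $V$ to a $\Q(x)$-linear functional on all of $\Q(x)^n$ (extend a basis of $V$ to a basis of $\Q(x)^n$ and prescribe values on the complement), obtaining a vector $\bw\in\Q(x)^n$ with $\tilde\varphi(\bu)=\langle\bu,\bw\rangle$ for all $\bu\in V$, in particular for all $\bu\in M$.

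Finally, let $g\in\Z[x]\setminus\{0\}$ be a common denominator of the $n$ coordinates of $\bw$, so that $\bv:=g\bw\in\Z[x]^n$. Then for every $\bu\in M$ one has $g\varphi(\bu)=g\tilde\varphi(\bu)=g\langle\bu,\bw\rangle=\langle\bu,g\bw\rangle=\langle\bu,\bv\rangle=\varphi_{\bv}(\bu)$, and both sides already lie in $\Z[x]$ (the left side trivially, the right side because $\bu,\bv\in\Z[x]^n$), so $g\varphi=\varphi_{\bv}$ in $N$. There is no serious obstacle in this argument; the substantive ingredients are the non-degeneracy of the dot product over $\Q(x)$ and the well-definedness of the scalar extension $\tilde\varphi$.
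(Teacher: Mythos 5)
Your proof is correct, and it takes a somewhat different route from the paper's. The paper considers the map $\theta\colon\Z[x]^n\to N$, $\bv\mapsto\varphi_{\bv}$, which fits into the exact sequence $0\to M^C\to\Z[x]^n\to N$; after tensoring with $\Q(x)$ it computes $\rank(\image\theta)=n-\rank(M^C)=\rank(M)=\rank(N)$, and then invokes Lemma~\ref{pi-le} (that $N$ is free, hence torsion-free) to conclude that $N/\image\theta$ is a torsion module, so that some multiple $g\varphi$ lands in $\image\theta$. You instead construct the representing vector directly: extend $\varphi$ to a $\Q(x)$-linear form on $\Span_{\Q(x)}(M)$, extend further to all of $\Q(x)^n$, use non-degeneracy of the standard pairing over the field $\Q(x)$ to write this form as $\langle\cdot,\bw\rangle$, and clear denominators. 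The net effect is the same --- both arguments are really showing that $\theta\otimes\Q(x)$ hits $\varphi$ --- but yours is more hands-on and sidesteps the rank bookkeeping and the appeal to Lemma~\ref{pi-le}, relying only on the given embedding $M\subseteq\Z[x]^n$, while the paper's version is terser given that the syzygy/rank machinery and the freeness of $N$ are already on the table. One small remark: the well-definedness of $\tilde\varphi$ in your argument uses the embedding $M\subseteq\Z[x]^n$ (so that the identity $q'\bu=q\bu'$ can be read inside $M$ and hit with $\varphi$); this embedding is what encodes torsion-freeness here, so your parenthetical attribution is fine, just indirectly so.
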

\begin{proof}
The map $\theta$ gives an exact sequence:
\[0\longrightarrow M^C\longrightarrow\Z[x]^n\stackrel{\theta}{\longrightarrow} N.\]
Tensor it with $\Q(x)$ to obtain:
\[0\longrightarrow M^C_{\Q(x)}\longrightarrow\Q(x)^n\stackrel{\theta_{\Q(x)}}{\longrightarrow} N_{\Q(x)}.\]
Therefore $\rank(\image(\theta_{\Q(x)}))=n-\rank(M^C_{\Q(x)})=n-(n-\rank(M))=\rank(M)$. It follows $\rank(\image(\theta))=\rank(M)$. Since $N$ is a free $\Z[x]$-module by Lemma \ref{pi-le}, for any $\varphi\in N$, there exists $g\in \Z[x]$ such that $g\varphi\in \image(\theta)$, which implies that there exists $\bv\in\Z[x]^n$ such that $g\varphi=\varphi_{\bv}$.
\end{proof}

\section{Affine $P[\sigma]$-Varieties}
In this section, we will introduce the concept of affine $P[\sigma]$-varieties which is a generalization of the usual $\sigma$-variety.

\subsection{Perfect $P[\sigma]$-Ideals}
Let us define an order on $\mathbb{Z}[x]$ as follows: $f=\sum_{i=0}^pa_ix^i>g=\sum_{i=0}^pb_ix^i$ if and only if there exists an integer $k$ such that $a_i=b_i$ for $k+1\le i\le p$ and $a_k>b_k$. Obviously it is a total order on $\mathbb{Z}[x]$ and $f>0$ if and only if $\lc(f)>0$. Denote $P[x]=\{f\in \mathbb{Z}[x]\mid f\ge0\}$ and $P[x]^*=P[x]\backslash{\{0\}}$. In a $\sigma$-field $K$, we set that for $a\in K, g\in P[x]^*, a^g=0$ if and only if $a=0$.

Let $k$ be a $\sigma$-field. Suppose $y=\{y_1,\ldots,y_m\}$ is a set of $\sigma$-indeterminates over $k$. Then the $P[\sigma]$-polynomial ring over $k$ in $y$ is the polynomial ring in the variables $y_1^{P[x]},\ldots,y_m^{P[x]}$, where $y_i^{P[x]}$ means $\{y_i^g\mid g\in P[x]^*\}, i=1,\ldots,m$. It is denoted by
\[k\{y_1,\ldots,y_m\}^{\p}:=k[y_1^{P[x]},\ldots,y_m^{P[x]}]\]
and has a natural $k$-$\sigma$-algebra structure. An element in $k\{y_1,\ldots,y_m\}^{\p}$ is called a {\em $P[\sigma]$-polynomial} over $k$. For $\bu=(u_1,\ldots,u_m)\in P[x]^m$, $\Y^{\bu}=\prod_{i=1}^my_i^{u_i}$ is called a {\em $P[\sigma]$-monomial}.
A {\em $\p$-term} is the product of a constant in $k$ and a $\p$-monomial.

\begin{definition}
A $\sigma$-ideal $I\subseteq k\{y_1,\ldots,y_m\}^{P[\sigma]}$ is called a {\em $P[\sigma]$-ideal} if $\Y^{\bu}f\in I$ implies $\Y^{g\bu}f\in I$ for any $g\in P[x]^*$, where $\Y^{\bu}$ is a $P[\sigma]$-monomial.\\
A $\sigma$-ideal $I\subseteq k\{y_1,\ldots,y_m\}^{P[\sigma]}$ is called a {\em $P[\sigma]$-perfect ideal} if it is a perfect $\sigma$-ideal and a $P[\sigma]$-ideal.\\
A $\sigma$-ideal $I\subseteq k\{y_1,\ldots,y_m\}^{P[\sigma]}$ is called a {\em $P[\sigma]$-prime ideal} if it is a $P[\sigma]$-perfect ideal and a prime ideal.
\end{definition}
\begin{remark}\label{pi-remark}
For $\bu\in P[x]^m$, we denote $b_{\bu}=(b_1,\ldots,b_m)\in\{0,1\}^m$ such that $b_i=1$ if $u_i\ne0$ and $b_i=0$ if $u_i=0$ for $i=1,\ldots,m$. If $I\subseteq k\{y_1,\ldots,y_m\}^{P[\sigma]}$ is a perfect $\sigma$-ideal and $\Y^{\bu}f\in I$ with $\bu=\bu^+-\bu^-$, where $\bu^+,\bu^-\in \N[x]^s$, then $\Y^{\bu^+}f\in I$, and therefore by the property of perfect $\sigma$-ideals, $\Y^{b_{\bu}}f\in I$. Furthermore, if $I$ is a $\p$-perfect ideal, then $\Y^{\bu}f\in I$ implies that $\Y^{\bv}f\in I$, for any $\bv\in P[x]^m$ satisfying $v_i\ne0$ if $u_i\ne0, 1\le i\le m$.
\end{remark}

It is easy to check that the intersection of $\p$-perfect ideals is again a $\p$-perfect ideal. Therefore, each subset $F$ of $k\{y_1,\ldots,y_m\}^{P[\sigma]}$ is contained in a smallest $\p$-perfect ideal, which is called the {\em $\p$-perfect closure} of $F$ or the $\p$-perfect ideal generated by $F$. It is denoted by $\{F\}^{\p}$.

For a perfect $\sigma$-ideal $\mathfrak{a}$ of $k\{y_1,\ldots,y_m\}^{P[\sigma]}$, set
\begin{align}\label{pi-equ1}
\mathfrak{a}':&=\{\Y^{\bu}f\mid\Y^{b_{\bu}}f\in\mathfrak{a}, \bu\in P[x]^m\}\\
&=\{y_{r_1}^{u_1}\cdots y_{r_s}^{u_s}f\mid y_{r_1}\cdots y_{r_s}f\in\mathfrak{a}, u_i\in P[x]^*, 1\le i\le s\}.
\end{align}
Let $F$ be a subset of $k\{y_1,\ldots,y_m\}^{P[\sigma]}$. Define $F^{[1]}=\{F\}'$ and recursively define $F^{[i]}=\{F^{[i-1]}\}'$ for $i\geqslant 2$. One can check that $\{F\}^{\p}=\cup_{i\geqslant 1}F^{[i]}$. Moreover, we have
\begin{lemma}
Let $F$ and $G$ be two subsets of $k\{y_1,\ldots,y_m\}^{P[\sigma]}$. Then
\begin{enumerate}
\item[(a)] $F^{[1]}G^{[1]}\subseteq (FG)^{[1]}$;
\item[(b)] $F^{[i]}G^{[i]}\subseteq (FG)^{[i]}$ for $i\geqslant 1$;
\item[(c)] $F^{[i]}\cap G^{[i]}=(FG)^{[i]}$ for $i\geqslant 1$.
\end{enumerate}
\end{lemma}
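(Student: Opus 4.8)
The plan is to prove the three inclusions/equalities in sequence, deriving (b) and (c) from (a) by an induction on $i$. Throughout I will use the description $\{F\}^{\p}=\cup_{i\geqslant 1}F^{[i]}$ together with the operation $\mathfrak{a}\mapsto\mathfrak{a}'$ defined in \eqref{pi-equ1} and with the standard fact that for perfect $\sigma$-ideals $\mathfrak a,\mathfrak b$ one has $\{FG\}=\{F\}\cap\{G\}$ (the difference-algebra analogue of the radical-ideal identity $\sqrt{IJ}=\sqrt I\cap\sqrt J$), which I will quote for $\sigma$-perfect ideals.

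First I would prove (a). Take $h=\Y^{\bu}f\in F^{[1]}=\{F\}'$ with $\Y^{b_{\bu}}f\in\{F\}$, and $h'=\Y^{\bv}g\in G^{[1]}$ with $\Y^{b_{\bv}}g\in\{G\}$. Their product is $\Y^{\bu}\Y^{\bv}fg=\Y^{\bu+\bv}fg$. Now $\Y^{b_{\bu}}f\cdot\Y^{b_{\bv}}g=\Y^{b_{\bu}+b_{\bv}}fg$ lies in $\{F\}\{G\}\subseteq\{F\}\cap\{G\}=\{FG\}$. Applying the perfect-$\sigma$-ideal reduction (Remark \ref{pi-remark}: $\Y^{\bw}e\in\mathfrak a$ perfect $\Rightarrow\Y^{b_{\bw}}e\in\mathfrak a$) to $\Y^{b_{\bu}+b_{\bv}}fg\in\{FG\}$ gives $\Y^{b_{\bu+\bv}}fg\in\{FG\}$, since $b_{\bu+\bv}=b_{b_{\bu}+b_{\bv}}$ because all coefficients of $\bu,\bv$ are nonnegative (here the total order on $\Z[x]$ matters: $\bu,\bv\in P[x]^m$ so no cancellation occurs in $\bu+\bv$, i.e. $(\bu+\bv)_i=0$ iff $u_i=v_i=0$). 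Hence $\Y^{\bu+\bv}fg\in\{FG\}'=(FG)^{[1]}$, which is (a).

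Next, (b) follows by induction on $i$, the base case $i=1$ being (a). For the inductive step, $F^{[i+1]}G^{[i+1]}=\{F^{[i]}\}'\{G^{[i]}\}'\subseteq(F^{[i]}G^{[i]})^{[1]}$ by (a) applied to the sets $F^{[i]}$ and $G^{[i]}$; and by the inductive hypothesis $F^{[i]}G^{[i]}\subseteq(FG)^{[i]}$, so $(F^{[i]}G^{[i]})^{[1]}\subseteq((FG)^{[i]})^{[1]}=(FG)^{[i+1]}$ — here I use that $S\subseteq T$ implies $S^{[1]}\subseteq T^{[1]}$, which is immediate from the definition since $\{S\}\subseteq\{T\}$ and the descriptions in \eqref{pi-equ1} are monotone. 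For (c), the inclusion $(FG)^{[i]}\subseteq F^{[i]}\cap G^{[i]}$ is monotonicity applied to $FG\subseteq\{F\}$ and $FG\subseteq\{G\}$ (so $(FG)^{[i]}\subseteq F^{[i]}$ and $\subseteq G^{[i]}$); the reverse inclusion $F^{[i]}\cap G^{[i]}\subseteq(FG)^{[i]}$ is the substantive half. Given $h\in F^{[i]}\cap G^{[i]}$, we have $h^2\in F^{[i]}G^{[i]}\subseteq(FG)^{[i]}$ by (b); since $(FG)^{[i]}$ is a perfect $\sigma$-ideal (each $F^{[j]}$ is $\{F^{[j-1]}\}'$, which one checks is again $\p$-perfect, hence in particular perfect) and $h^2=h^{2}$ with $2\in\N[x]\setminus\{0\}$, the perfect property forces $h\in(FG)^{[i]}$.

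The main obstacle I anticipate is part (a), and specifically the careful bookkeeping that the exponent $b_{\bu+\bv}$ produced after the perfect-ideal reduction is exactly $b_{b_{\bu}+b_{\bv}}$; this is where the positivity $\bu,\bv\in P[x]^m$ (so that addition in $P[x]^m$ has no cancellation, by the total order on $\Z[x]$) is essential, and it is worth isolating as a small remark that for $\bu\in P[x]^m$ the support pattern $b_{\bu}$ is additive on sums. A secondary point needing care is the identity $\{F\}\{G\}\subseteq\{FG\}=\{F\}\cap\{G\}$ for perfect $\sigma$-ideals: the inclusion $\{F\}\{G\}\subseteq\{F\}\cap\{G\}$ is trivial, and $\{F\}\cap\{G\}\subseteq\{FG\}$ holds because if $a\in\{F\}\cap\{G\}$ then $a^2\in\{F\}\{G\}\subseteq[FG]$, whence $a\in\{FG\}$ by perfectness; I will state this explicitly rather than rely on the reader. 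Everything else is monotonicity of the operations $F\mapsto\{F\}$, $\mathfrak a\mapsto\mathfrak a'$, and $F\mapsto F^{[i]}$, which is routine from the definitions.
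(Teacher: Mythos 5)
Your proofs of (a) and (b) follow the paper's argument and are in fact a bit more careful in (a): the paper passes directly from $\Y^{b_{\bu}}f\cdot\Y^{b_{\bv}}g\in\{FG\}$ to $\Y^{\bu+\bv}fg\in(FG)^{[1]}$, whereas you spell out the missing step (apply perfect reduction to get $\Y^{b_{b_{\bu}+b_{\bv}}}(fg)\in\{FG\}$ and observe $b_{b_{\bu}+b_{\bv}}=b_{\bu+\bv}$ because $\bu,\bv\in P[x]^m$ so no cancellation occurs). Your remark on $\{F\}\cap\{G\}=\{FG\}$ for perfect $\sigma$-ideals, and your use of monotonicity of $S\mapsto S^{[1]}$, are also fine and match what the paper uses implicitly.

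Part (c) is where you diverge, and where there is a genuine gap. You take $h\in F^{[i]}\cap G^{[i]}$, deduce $h^2\in(FG)^{[i]}$ from (b), and then conclude $h\in(FG)^{[i]}$ by invoking that $(FG)^{[i]}$ is a perfect $\sigma$-ideal. But $(FG)^{[i]}=\{(FG)^{[i-1]}\}'$, and the $(\cdot)'$ operation applied to a perfect $\sigma$-ideal is not by fiat another perfect $\sigma$-ideal — indeed it is not even immediate from (\ref{pi-equ1}) that $\mathfrak a'$ is an ideal. Your parenthetical ``which one checks is again $\p$-perfect'' is doing the entire work of the step and is left unproved. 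It is also a structural red flag: if $\{F^{[j-1]}\}'$ really were $\p$-perfect for every $j$, then (since for a $\p$-perfect ideal $\mathfrak a$ one has both $\{\mathfrak a\}=\mathfrak a$ and $\mathfrak a'=\mathfrak a$, the latter by the last part of Remark \ref{pi-remark}) the recursion $F^{[j]}=\{F^{[j-1]}\}'$ would stabilize at $j=1$, giving $\{F\}^{\p}=F^{[1]}$ outright and making the union $\cup_{i\geq 1}F^{[i]}$ and the induction on $i$ vacuous. That is clearly at odds with how the paper sets this machinery up, so you should not rely on this claim without a careful independent proof. The paper's proof of (c) is more elementary and avoids the issue: it writes the element as $\Y^{\bu}f$, applies (b) to get $\Y^{2\bu}f^2\in(FG)^{[i]}=\{(FG)^{[i-1]}\}'$ with witnessing exponent $2\bu$, reads off $\Y^{b_{\bu}}f^2\in\{(FG)^{[i-1]}\}$ from (\ref{pi-equ1}) (using $b_{2\bu}=b_{\bu}$), and only then invokes perfectness of $\{(FG)^{[i-1]}\}$ — an object that is perfect by construction, not by a nontrivial lemma — to get $\Y^{b_{\bu}}f$ and hence $\Y^{\bu}f\in(FG)^{[i]}$. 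I would rewrite your (c) to follow that route.
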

\begin{proof}
(a): Let $\Y^{\bu}f\in F^{[1]}$ and $\Y^{\bv}g\in G^{[1]}$. Then by (\ref{pi-equ1}), $\Y^{b_{\bu}}f\in \{F\}$ and $\Y^{b_{\bv}}g\in \{G\}$. So $\Y^{b_{\bu}}f\cdot\Y^{b_{\bv}}g\in \{F\}\cap\{G\}=\{FG\}$, and hence $\Y^{\bu}f\cdot\Y^{\bv}g\in (FG)^{[1]}$.\\
(b): We prove (b) by induction on $i$. The case $i=1$ is proved by (a). For the inductive step, assume now $i\ge2$. Then by (a) and the induction hypothesis,
\begin{align*}
  F^{[i]}G^{[i]}&=(F^{[i-1]})^{[1]}(G^{[i-1]})^{[1]}\subseteq (F^{[i-1]}G^{[i-1]})^{[1]}\\
  &\subseteq ((FG)^{[i-1]})^{[1]}=(FG)^{[i]}.
\end{align*}
(c): It is obvious that $(FG)^{[i]}\subseteq F^{[i]}\cap G^{[i]}$. For the converse, let $\Y^{\bu}f\in F^{[i]}\cap G^{[i]}$, then $\Y^{2\bu}f^2\in F^{[i]}G^{[i]}\subseteq (FG)^{[i]}=\{(FG)^{[i-1]}\}'$, so by (\ref{pi-equ1}), $\Y^{b_{\bu}}f^2\in \{(FG)^{[i-1]}\}$ and hence $\Y^{b_{\bu}}f\in \{(FG)^{[i-1]}\}$. It follows $\Y^{\bu}f\in (FG)^{[i]}$ which proves $F^{[i]}\cap G^{[i]}\subseteq(FG)^{[i]}$.
\end{proof}
\begin{prop}\label{pi-prop}
Let $F$ and $G$ be two subsets of $k\{y_1,\ldots,y_m\}^{P[\sigma]}$. Then
\[\{F\}^{\p}\cap\{G\}^{\p}=\{FG\}^{\p}.\]
\end{prop}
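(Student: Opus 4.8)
The inclusion $\{FG\}^{\p}\subseteq\{F\}^{\p}\cap\{G\}^{\p}$ is immediate, since $FG\subseteq\{F\}^{\p}$ and $FG\subseteq\{G\}^{\p}$, and both $\{F\}^{\p}$ and $\{G\}^{\p}$ are $\p$-perfect ideals, so the smallest $\p$-perfect ideal containing $FG$ is contained in each. The content is the reverse inclusion. The plan is to exploit the explicit description $\{F\}^{\p}=\cup_{i\geqslant 1}F^{[i]}$ together with part (c) of the preceding lemma, which already gives $F^{[i]}\cap G^{[i]}=(FG)^{[i]}$ for every $i\geqslant 1$.

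\textbf{Key steps.} First I would take $h\in\{F\}^{\p}\cap\{G\}^{\p}$. By the description of the $\p$-perfect closure as an ascending union, there exist $i,j\geqslant 1$ with $h\in F^{[i]}$ and $h\in G^{[j]}$. The second step is to observe that the operations $F\rightsquigarrow F^{[1]}$ and hence $F\rightsquigarrow F^{[i]}$ are monotone in the sense that $F^{[i]}\subseteq F^{[i+1]}$ (each $F^{[i]}$ is contained in the $\p$-perfect ideal it generates, and $\{F^{[i]}\}'\supseteq F^{[i]}$, or more directly $F^{[i]}\subseteq\{F^{[i]}\}\subseteq\{F^{[i]}\}'=F^{[i+1]}$). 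Setting $\ell=\max\{i,j\}$, it follows that $h\in F^{[\ell]}\cap G^{[\ell]}$. The third step is to apply Lemma~(c) with the common index $\ell$: $F^{[\ell]}\cap G^{[\ell]}=(FG)^{[\ell]}$, so $h\in(FG)^{[\ell]}\subseteq\cup_{i\geqslant 1}(FG)^{[i]}=\{FG\}^{\p}$. This proves $\{F\}^{\p}\cap\{G\}^{\p}\subseteq\{FG\}^{\p}$ and closes the argument.

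\textbf{Main obstacle.} The only genuinely nontrivial ingredient is part (c) of the lemma, which is already established in the excerpt, so the remaining work is routine. The small point that requires care is the monotonicity $F^{[i]}\subseteq F^{[i+1]}$, needed to align the two indices $i$ and $j$ at a common value before invoking (c); this should be verified from the definition $F^{[i]}=\{F^{[i-1]}\}'$ and the evident inclusion $A\subseteq\{A\}'$ (take $\bu=b_{\bu}$, i.e. all exponents equal to those already present, in the description (\ref{pi-equ1}) of the prime on a perfect $\sigma$-ideal, after first passing to $\{A\}$). I expect no serious difficulty beyond bookkeeping.
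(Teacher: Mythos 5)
Your proof is correct and follows essentially the same route as the paper's: write $\{F\}^{\p}$ and $\{G\}^{\p}$ as the ascending unions $\cup_i F^{[i]}$ and $\cup_j G^{[j]}$, align the two indices by monotonicity, and apply part (c) of the preceding lemma at the common index. The only difference is that you spell out the monotonicity $F^{[i]}\subseteq F^{[i+1]}$, which the paper uses silently when it says "without loss of generality $i\le j$, then $f\in F^{[j]}$."
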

\begin{proof}
Obviously, $\{F\}^{\p}\cap\{G\}^{\p}\supseteq\{FG\}^{\p}$. For the converse, let $f\in \{F\}^{\p}\cap\{G\}^{\p}$, then there exist $i$ and $j$ such that $f\in \{F\}^{[i]}$ and $f\in \{G\}^{[j]}$. Without loss of generality, we can assume $i\le j$, then $f\in \{F\}^{[j]}$. Therefore, $f\in F^{[j]}\cap G^{[j]}=(FG)^{[j]}\subseteq\{FG\}^{\p}$.
\end{proof}
\begin{theorem}\label{pi-thm1}
Let $F$ be a subset of $k\{y_1,\ldots,y_m\}^{P[\sigma]}$. Then $\{F\}^{\p}$ is the intersection of all $\p$-prime ideals containing $F$. In particular, every $\p$-perfect ideal is the intersection of $\p$-prime ideals.
\end{theorem}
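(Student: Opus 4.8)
The plan is to mimic the classical proof that a perfect difference ideal is the intersection of the reflexive prime difference ideals containing it, adapting it to the $\p$-setting. Let $J = \bigcap\{\mathfrak{p} \mid \mathfrak{p} \text{ is } \p\text{-prime}, F \subseteq \mathfrak{p}\}$. Since every $\p$-prime ideal is in particular a $\p$-perfect ideal, and the $\p$-perfect closure $\{F\}^{\p}$ is the smallest $\p$-perfect ideal containing $F$, we get the easy inclusion $\{F\}^{\p} \subseteq J$ immediately. The substance is the reverse inclusion $J \subseteq \{F\}^{\p}$, equivalently: if $h \notin \{F\}^{\p}$, then there is a $\p$-prime ideal containing $F$ but not $h$.

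First I would set up a Zorn's lemma argument. Fix $h \notin \{F\}^{\p}$ and consider the family $\Sigma$ of all $\p$-perfect ideals $\mathfrak{a}$ with $F \subseteq \mathfrak{a}$ and $h \notin \mathfrak{a}$. This family is nonempty (it contains $\{F\}^{\p}$), and it is closed under unions of chains because the defining conditions are preserved under directed unions and because the $\p$-perfect property is visibly preserved under increasing unions (a witness $\Y^{\bu}g \in \mathfrak{a}$ lies in some member of the chain, and then so does $\Y^{g\bu}g$). So $\Sigma$ has a maximal element $\mathfrak{q}$. The crux is to show $\mathfrak{q}$ is prime as an ordinary ideal; since it is already $\p$-perfect, this makes it a $\p$-prime ideal with $F \subseteq \mathfrak{q}$ and $h \notin \mathfrak{q}$, finishing the proof.

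To prove $\mathfrak{q}$ is prime, suppose $ab \in \mathfrak{q}$ but $a \notin \mathfrak{q}$ and $b \notin \mathfrak{q}$. By maximality, the $\p$-perfect ideals $\{\mathfrak{q} \cup \{a\}\}^{\p}$ and $\{\mathfrak{q} \cup \{b\}\}^{\p}$ both contain $h$. Now I would invoke Proposition \ref{pi-prop}: writing $F_1 = \mathfrak{q} \cup \{a\}$ and $F_2 = \mathfrak{q} \cup \{b\}$, we have $h \in \{F_1\}^{\p} \cap \{F_2\}^{\p} = \{F_1 F_2\}^{\p}$. But every generator of $F_1 F_2$ lies in $\mathfrak{q}$: products $q q'$, $qa$, $qb$ with $q,q' \in \mathfrak{q}$ are in $\mathfrak{q}$, and $ab \in \mathfrak{q}$ by assumption. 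Hence $\{F_1 F_2\}^{\p} \subseteq \mathfrak{q}$, so $h \in \mathfrak{q}$, a contradiction. Therefore $\mathfrak{q}$ is prime. The "in particular" clause is then immediate: a $\p$-perfect ideal $I$ equals $\{I\}^{\p}$, which by the first part is the intersection of the $\p$-prime ideals containing $I$.

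I expect the main obstacle to be the bookkeeping around the Zorn's lemma step, specifically verifying carefully that $\Sigma$ is closed under unions of chains in a way that respects all three conditions simultaneously (being a $\sigma$-ideal, being perfect, being a $\p$-ideal), and confirming that the union of a chain of $\p$-perfect ideals is again $\p$-perfect rather than merely a $\p$-ideal — this uses that perfectness, being an "if $a^g \in \mathfrak{a}$ then $a \in \mathfrak{a}$" condition with a single witness, passes to directed unions. Everything else reduces cleanly to Proposition \ref{pi-prop}, which is exactly the multiplicativity statement tailored for this kind of prime-avoidance argument, so once the Zorn setup is in place the primality of $\mathfrak{q}$ is a short formal consequence.
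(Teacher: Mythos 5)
Your proposal is correct and follows essentially the same route as the paper: a Zorn's lemma argument producing a maximal $\p$-perfect ideal avoiding $h$, with primality of the maximal element deduced from Proposition~\ref{pi-prop} exactly as you describe. The only cosmetic difference is that the paper first reduces to showing every $\p$-perfect ideal is an intersection of $\p$-prime ideals and then runs Zorn from there, whereas you run Zorn directly from $\{F\}^{\p}$; the content is identical.
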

\begin{proof}
Because $\p$-prime ideals are $\p$-perfect, it is clear that $\{F\}^{\p}$ is contained in every $\p$-prime ideal containing $F$. It suffices to show that every $\p$-perfect ideal is the intersection of $\p$-prime ideals.

Let $I$ be any $\p$-perfect ideal of $k\{y_1,\ldots,y_m\}^{P[\sigma]}$. If $f$ is contained in every $\p$-prime ideal containing $I$, we have to show that $f\in I$. Suppose the contrary, then we just need to find a $\p$-prime ideal containing $I$ which doesn't contain $f$. Let $\Sigma=\{\mathfrak{p}\mid\textrm{ $\mathfrak{p}$ is a $\p$-perfect ideal}, \mathfrak{p}\supseteq I, f\notin \mathfrak{p}\}$. Clearly the union of an ascending chain of $\p$-perfect ideals containing $I$ not containing $f$ is again a $\p$-perfect ideal containing $I$ not containing $f$. Since $I\in\Sigma$, $\Sigma$ is a nonempty set. So by Zorn's Lemma, there exists a maximal element in $\Sigma$, denoted it by $\mathfrak{q}$. We claim that $\mathfrak{q}$ is a $\p$-prime ideal.

We only need to show that $\mathfrak{q}$ is prime. Suppose $gh\in \mathfrak{q}$, if both $g$ and $h$ are not in $\mathfrak{q}$, then by the maximality of $\mathfrak{q}$, $f\in \{\mathfrak{q},g\}^{\p}$ and $f\in \{\mathfrak{q},h\}^{\p}$. So $f\in \{\mathfrak{q},g\}^{\p}\cap\{\mathfrak{q},h\}^{\p}$. By Proposition \ref{pi-prop}, $\{\mathfrak{q},g\}^{\p}\cap\{\mathfrak{q},h\}^{\p}=\{\mathfrak{q}^2,\mathfrak{q}g,\mathfrak{q}h,gh\}^{\p}\subseteq \mathfrak{q}$, so $f\in \mathfrak{q}$ which is contradictory to the choice of $\mathfrak{q}$. Thus $\mathfrak{q}$ is a $\p$-prime ideal and it contains $I$ but doesn't contain $f$ as desired.
\end{proof}
\begin{remark}
It is well known that every perfect $\sigma$-ideal is a finite intersection of $\sigma$-prime ideals, which is equivalent to the finite generated property of perfect $\sigma$-ideals. However, in the $\p$-case, it is still unknown that if a $\p$-perfect ideal could be written as an intersection of finitely many $\p$-prime ideals.
\end{remark}

\subsection{Affine $P[\sigma]$-Varieties}
Let $F$ be any subset of $k\{y_1,\ldots,y_m\}^{P[\sigma]}$ and $K$ any $\sigma$-field extension of $k$. We define the solutions of $F$ in $K$ to be
$$\mathbb{V}_K(F)=\{a\in K^n\mid f(a)=0 \textrm{ for all } f\in F\}.$$
\begin{definition}
Let $k$ be a $\sigma$-field. An {\em affine $P[\sigma]$-variety} over $k$ is a functor $X$ from the category of $\sigma$-field extensions of $k$ to the category of sets which is of the form $X=\mathbb{V}(F)$ for some subset $F$ of $k\{y_1,\ldots,y_m\}^{P[\sigma]}$. In this situation, we say that $X$ is the (affine) $P[\sigma]$-variety defined by $F$.
\end{definition}

Obviously, the $\sigma$-affine space $\mathbb{A}_k^m=\mathbb{V}(0)$ is an affine $P[\sigma]$-variety over $k$. Moreover, every affine $\sigma$-variety can be naturally viewed as an affine $\p$-variety.

If $X$ and $Y$ are two $P[\sigma]$-varieties over $k$, then we write $X\subseteq Y$ to indicate that $X$ is a subfunctor of $Y$. This simply means that $X(K)\subseteq Y(K)$ for every $K\in \mathscr{E}_k$. In this situation, we also say that $X$ is a {\em $P[\sigma]$-subvariety} of $Y$.

Let $X$ be a $P[\sigma]$-subvariety of $\mathbb{A}_k^m$. Then
\[\mathbb{I}(X):=\{f\in k\{y_1,\ldots,y_m\}^{P[\sigma]}\mid f(a)=0\textrm{ for all }K\in \mathscr{E}_k\textrm{ and all }a\in X(K)\}\]
is called the {\em vanishing ideal} of $X$.
\begin{lemma}\label{apv-lemma}
Let $X$ be a $P[\sigma]$-subvariety of $\mathbb{A}_k^m$. Then $\mathbb{I}(X)$ is a $P[\sigma]$-perfect ideal.
\end{lemma}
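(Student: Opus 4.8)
The plan is to show that $\mathbb{I}(X)$ satisfies the three defining conditions of a $P[\sigma]$-perfect ideal: it is a $\sigma$-ideal, it is a perfect $\sigma$-ideal, and it is a $P[\sigma]$-ideal in the sense of the definition (closed under replacing $\Y^{\bu}f$ by $\Y^{g\bu}f$ for $g\in P[x]^*$). Each of these is verified pointwise, by evaluating at an arbitrary solution $a\in X(K)$ for an arbitrary $K\in\mathscr{E}_k$, using only that $f(a)=0$ for all $f\in\mathbb{I}(X)$ and the multiplicative behavior of the operations $a\rightsquigarrow a^g$.

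First I would check that $\mathbb{I}(X)$ is an ideal closed under $\sigma$: this is routine, since if $f(a)=0$ then $(\sigma f)(a)=\sigma(f(a))=0$ because $a$ and the coefficients live in a $\sigma$-field and evaluation commutes with $\sigma$; the ideal axioms are immediate. Next, for the perfectness: suppose $h^g\in\mathbb{I}(X)$ for some $g\in\N[x]\setminus\{0\}$ and some $h\in k\{y_1,\ldots,y_m\}^{P[\sigma]}$. Then for every $a\in X(K)$ we have $h(a)^g=0$ in $K$; since $K$ is a field (an integral domain) and $g\neq 0$, and recalling the convention that $b^g=0$ iff $b=0$ for $g\in P[x]^*$ — together with the fact that $b^g=\prod_i(\sigma^i(b))^{c_i}$ is a product of powers in the field — we conclude $h(a)=0$. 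Hence $h\in\mathbb{I}(X)$. (I should be slightly careful: $g\in\N[x]$ here, so all exponents $c_i$ are nonnegative, and $h(a)^g=0$ in the field forces some $\sigma^i(h(a))=0$, hence $h(a)=0$ since $\sigma$ is injective on the field $K$.)

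The remaining point is the $P[\sigma]$-ideal condition. Suppose $\Y^{\bu}f\in\mathbb{I}(X)$ where $\Y^{\bu}$ is a $P[\sigma]$-monomial and $f\in k\{y_1,\ldots,y_m\}^{P[\sigma]}$; I must show $\Y^{g\bu}f\in\mathbb{I}(X)$ for any $g\in P[x]^*$. Fix $a=(a_1,\ldots,a_m)\in X(K)$. Then $a^{\bu}f(a)=0$, where $a^{\bu}=\prod_i a_i^{u_i}$. If $f(a)=0$ there is nothing to prove. Otherwise, since $K$ is a field, $a^{\bu}=0$, so $a_i^{u_i}=0$ for some $i$ with $u_i\neq 0$, whence $a_i=0$; but then $a_i^{(g\bu)_i}=a_i^{gu_i}=0$ as well (again $gu_i\neq 0$ since $g,u_i\in P[x]^*$ and $P[x]^*$ is closed under multiplication, as $\lc(g)\lc(u_i)>0$), so $a^{g\bu}=0$ and therefore $a^{g\bu}f(a)=0$. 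This holds for all $a$ and all $K$, so $\Y^{g\bu}f\in\mathbb{I}(X)$. The main (and really only) subtlety throughout is bookkeeping with the convention $b^g=0\iff b=0$ and the injectivity of $\sigma$ on a $\sigma$-field, which is what lets every step reduce to ordinary reasoning in an integral domain; there is no deep obstacle here, the statement being the $P[\sigma]$-analogue of the classical fact that vanishing ideals are perfect.
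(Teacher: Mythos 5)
Your proof is correct and follows essentially the same approach as the paper, which simply asserts that $\mathbb{I}(X)$ is clearly a perfect $\sigma$-ideal and then checks the $P[\sigma]$-ideal condition by the same pointwise argument ($a^{\bu}f(a)=0$ implies $a^{g\bu}f(a)=0$). You have merely filled in the routine verifications that the paper leaves implicit.
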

\begin{proof}
Clearly, $\mathbb{I}(X)$ is a perfect $\sigma$-ideal. Suppose that $\Y^{\bu}f \in I$ and $g\in P[x]^*$. Then for every $K\in \mathscr{E}_k$ and for every $a\in X(K)$, $a^{\bu}f(a)=0$, which implies $a^{g\bu}f(a)=0$. It follows $\Y^{g\bu}f\in I$. Thus $\mathbb{I}(X)$ is a $P[\sigma]$-perfect ideal.
\end{proof}
\begin{definition}
Let $X$ be a $P[\sigma]$-subvariety of $\mathbb{A}_k^m$. Then the $k$-$\sigma$-algebra
\[k\{X\}:=k\{y_1,\ldots,y_m\}^{P[\sigma]}/\mathbb{I}(X)\]
is called the $P[\sigma]$-coordinate ring of $X$.
\end{definition}

A $k$-$\sigma$-algebra isomorphic to some $k\{y_1,\ldots,y_m\}^{P[\sigma]}/\mathbb{I}(X)$ is called an {\em affine $k$-$P[\sigma]$-algebra}.
\begin{lemma}\label{apv-lemma1}
Let $X$ be a $k$-$P[\sigma]$-variety. Then for any $K\in \mathscr{E}_k$, there is a natural bijection between $X(K)$ and the set of $k$-$\sigma$-algebra homomorphisms from $k\{X\}$ to $K$. Indeed, $X\simeq \Hom(k\{X\},\underline{\mspace{18mu}})$ as functors.
\end{lemma}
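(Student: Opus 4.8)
The plan is to follow the proof of Lemma~\ref{pdag-lemma1}, adding the book-keeping demanded by the $P[x]$-exponents and the convention $a^g=0\iff a=0$ for $g\in P[x]^*$. Write $X=\V(F)\subseteq\A_k^m$, so $k\{X\}=R/\I(X)$ where $R=k\{y_1,\dots,y_m\}^{\p}$ and $\I(X)$ is $\p$-perfect by Lemma~\ref{apv-lemma}. Let $\bar y_i$ denote the image of $y_i$ in $k\{X\}$ and fix $K\in\mathscr{E}_k$.

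\emph{From points to homomorphisms.} Given $a=(a_1,\dots,a_m)\in X(K)\subseteq K^m$, I would define $\mathrm{ev}_a\colon R\to K$ on generators by $y_i^{\,g}\mapsto a_i^g$ for $g\in P[x]^*$, extended $k$-linearly and multiplicatively. This is well defined because it respects the defining relations $\Y^{\bu}\Y^{\bv}=\Y^{\bu+\bv}$ of $R$: one needs $a_i^{g}a_i^{h}=a_i^{g+h}$, which holds by $a^{p+q}=a^pa^q$ if $a_i\neq0$, and by the convention together with the closure of $P[x]^*$ under addition if $a_i=0$. It commutes with $\sigma$, since on generators $\mathrm{ev}_a(\sigma(y_i^{\,g}))=\mathrm{ev}_a(y_i^{\,xg})=a_i^{xg}=\sigma(a_i^g)=\sigma(\mathrm{ev}_a(y_i^{\,g}))$. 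As $a\in X(K)$ forces $\mathrm{ev}_a(f)=f(a)=0$ for every $f\in\I(X)$, the map $\mathrm{ev}_a$ factors through a $k$-$\sigma$-homomorphism $\overline{\mathrm{ev}}_a\colon k\{X\}\to K$.

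\emph{From homomorphisms to points.} Given $\psi\in\Hom(k\{X\},K)$, set $a_i:=\psi(\bar y_i)$ and $a:=(a_1,\dots,a_m)$. I claim $\psi=\overline{\mathrm{ev}}_a$, i.e.\ $\psi(\bar y_i^{\,g})=a_i^g$ for all $g\in P[x]^*$. If $a_i\neq0$, write $g=g^+-g^-$ with $g^+,g^-\in\N[x]$; since $\bar y_i^{\,h}=\prod_j\sigma^j(\bar y_i)^{c_j}$ in $k\{X\}$ for $h=\sum_jc_jx^j\in\N[x]$ and $\psi$ is a ring homomorphism commuting with $\sigma$, we get $\psi(\bar y_i^{\,g^{+}})=a_i^{g^{+}}$ and $\psi(\bar y_i^{\,g^{-}})=a_i^{g^{-}}$; applying $\psi$ to $\bar y_i^{\,g}\bar y_i^{\,g^-}=\bar y_i^{\,g^+}$ and dividing by $a_i^{g^-}\neq0$ yields $\psi(\bar y_i^{\,g})=a_i^{g^+-g^-}=a_i^g$. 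If $a_i=0$, note that every $g\in P[x]^*$ satisfies $g\geq1$ (the leading coefficient of $g-1$ equals that of $g$ when $\deg g\geq1$, and the case $\deg g=0$ is clear), so $g-1\in P[x]$ and $\bar y_i^{\,g}=\bar y_i\,\bar y_i^{\,g-1}$ with $\bar y_i^{\,0}:=1$; hence $\psi(\bar y_i^{\,g})=a_i\,\psi(\bar y_i^{\,g-1})=0=a_i^g$. Thus $\psi=\overline{\mathrm{ev}}_a$, and in particular $f(a)=\psi(\bar f)=0$ for all $f\in\I(X)\supseteq F$, so $a\in\V_K(F)=X(K)$.

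Finally I would observe that $a\mapsto\overline{\mathrm{ev}}_a$ and $\psi\mapsto(\psi(\bar y_1),\dots,\psi(\bar y_m))$ are mutually inverse ($\overline{\mathrm{ev}}_a(\bar y_i)=a_i$ on one side, $\psi=\overline{\mathrm{ev}}_{(\psi(\bar y_i))_i}$ by the previous paragraph on the other), and that the resulting bijection is natural in $K$: postcomposing $\mathrm{ev}_a$ with a $k$-$\sigma$-homomorphism $K\to L$ gives $\mathrm{ev}$ at the image of $a$. This yields the functor isomorphism $X\simeq\Hom(k\{X\},\underline{\mspace{18mu}})$. The only step that is not a formal transcription of the classical argument is the treatment of a coordinate $a_i=0$ together with an exponent $g\in P[x]^*$ having a nonzero negative part (e.g.\ $g=x-1$ or $g=x^2-x$), where the decomposition $g=g^+-g^-$ conveys no information about $\psi(\bar y_i^{\,g})$; the point that $g\geq1$, so $y_i^{\,g}$ is a multiple of $y_i$ in $R$, is what closes that gap, and I expect it to be the main thing to get right.
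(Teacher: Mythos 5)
Your proof is correct and follows the same route as the paper's very terse one-line argument: $a\mapsto\mathrm{ev}_a$ and $\psi\mapsto(\psi(\bar y_1),\dots,\psi(\bar y_m))$. The verification you supply—especially the observation that every $g\in P[x]^*$ satisfies $g\geq 1$, so that $y_i^{\,g}=y_i\cdot y_i^{\,g-1}$ in $R$ and the case $\psi(\bar y_i)=0$ still forces $\psi(\bar y_i^{\,g})=0$—is exactly the detail needed to make the paper's stated correspondence well-defined and a bijection.
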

\begin{proof}
Let $\bar{y}$ to denote the coordinate functions on $X$. If $a\in X(K)$, define a $k$-$\sigma$-morphism from $k\{X\}$ to $K$ by sending $\bar{y}$ to $a$. Conversely, given a $k$-$\sigma$-morphism $\phi\colon k\{X\}\rightarrow K$, then $a:=\phi(\bar{y})$ belongs to $X(K)$.
\end{proof}
\begin{prop}\label{prop}
Let $F$ be a subset of $k\{y_1,\ldots,y_m\}^{\p}$, then
\[\I(\V(F))=\{F\}^{\p}.\]
\end{prop}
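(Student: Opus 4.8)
The plan is to prove the two inclusions $\{F\}^{\p}\subseteq \I(\V(F))$ and $\I(\V(F))\subseteq\{F\}^{\p}$ separately, with the first being routine and the second being the substantive part that relies on the structure theory developed above, in particular Theorem~\ref{pi-thm1} and Theorem~\ref{pdag-thm}.

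First I would establish $\{F\}^{\p}\subseteq \I(\V(F))$. By Lemma~\ref{apv-lemma}, $\I(\V(F))$ is a $\p$-perfect ideal, and it clearly contains $F$ since every solution of $F$ annihilates every element of $F$. Since $\{F\}^{\p}$ is by definition the smallest $\p$-perfect ideal containing $F$, the inclusion follows immediately. This direction uses nothing beyond the definitions and Lemma~\ref{apv-lemma}.

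For the reverse inclusion $\I(\V(F))\subseteq\{F\}^{\p}$, the key is Theorem~\ref{pi-thm1}, which says $\{F\}^{\p}$ is the intersection of all $\p$-prime ideals containing $F$. So it suffices to show that every $\p$-prime ideal $\mathfrak{p}$ containing $F$ also contains $\I(\V(F))$. Fix such a $\mathfrak{p}$; I want to produce a $\sigma$-field extension $K$ of $k$ and a point $a\in\V_K(F)$ whose vanishing ideal in $k\{y_1,\ldots,y_m\}^{\p}$ is exactly $\mathfrak{p}$. Since $\mathfrak{p}$ is prime and reflexive, the quotient $R=k\{y_1,\ldots,y_m\}^{\p}/\mathfrak{p}$ is a $k$-$\sigma$-algebra that is an integral domain whose endomorphism $\sigma$ is injective; hence $\sigma$ extends to the fraction field $K=\Frac(R)$, making $K$ a $\sigma$-field extension of $k$ (here one uses characteristic $0$ and the standard fact that a $\sigma$-prime ideal yields a $\sigma$-field of fractions, as in the classical $\sigma$-variety setting referenced via Theorem~\ref{pdag-thm}). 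Let $a=(\bar y_1,\ldots,\bar y_m)\in K^m$ be the image of the coordinate functions. Then for $f\in F\subseteq\mathfrak{p}$ we have $f(a)=0$, so $a\in\V_K(F)$; consequently any $h\in\I(\V(F))$ satisfies $h(a)=0$, i.e.\ $h\in\mathfrak{p}$. Since this holds for every $\p$-prime $\mathfrak{p}\supseteq F$, intersecting gives $\I(\V(F))\subseteq\{F\}^{\p}$ by Theorem~\ref{pi-thm1}.

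The main obstacle I anticipate is the passage from the $\p$-prime ideal $\mathfrak{p}$ to an honest point in some $\sigma$-field extension: one must check that the localization/fraction-field construction is compatible with the $P[\sigma]$-structure, namely that sending $y_i\mapsto \bar y_i$ is well defined on all of $k\{y_1,\ldots,y_m\}^{\p}$ (the variables $y_i^g$ for $g\in P[x]^*$ must map consistently, which is where one invokes the convention $a^g=0\iff a=0$ and the $\p$-ideal property ensuring that $\bar y_i=0$ in $R$ forces $\bar y_i^{\,g}=0$ as well). Modulo this verification, which parallels the classical case recorded in Theorem~\ref{pdag-thm} and Lemma~\ref{apv-lemma1}, the argument is a direct reduction to Theorem~\ref{pi-thm1}.
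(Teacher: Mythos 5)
Your proof is correct and follows essentially the same route as the paper's: the inclusion $\{F\}^{\p}\subseteq\I(\V(F))$ via Lemma~\ref{apv-lemma} and minimality, and the reverse inclusion by reducing through Theorem~\ref{pi-thm1} to the claim that every $\p$-prime ideal $\mathfrak{p}\supseteq F$ yields (via the residue field of the domain $k\{y_1,\ldots,y_m\}^{\p}/\mathfrak{p}$) a point $a\in\V_K(F)$ at which evaluation coincides with reduction modulo $\mathfrak{p}$. Where you go beyond the paper is in flagging and sketching the verification that the evaluation map $y_i^g\mapsto a_i^g$ is well defined at this residue point: the paper simply writes ``let $a$ be the image of $y$ in $K$'' without noting that one needs $\mathfrak{p}$ to be a $\p$-ideal so that $y_i\in\mathfrak{p}$ forces $y_i^g\in\mathfrak{p}$ for all $g\in P[x]^*$ (matching the convention $a^g=0\iff a=0$), and that reflexivity of $\mathfrak{p}$ makes $\sigma$ injective on the quotient so it extends to the fraction field. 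That extra care is a genuine improvement in exposition, not a different argument.
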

\begin{proof}
Clearly, $F\subset \I(\V(F))$. Since by Lemma \ref{apv-lemma}, $\I(\V(F))$ is a $\p$-perfect ideal, we have $\{F\}^{\p}\subseteq \I(\V(F))$. Now suppose $f\in \I(\V(F))$, we need to show $f\in \{F\}^{\p}$. Since $\{F\}^{\p}$ is the intersection of all $\p$-prime ideals containing $F$, it suffices to show that $f$ lies in every $\p$-prime ideal $\mathfrak{p}$ with $F\subset \mathfrak{p}$. Let $K$ be the residue class field of $\mathfrak{p}$ and let $a$ be the image of $y=\{y_1,\ldots,y_m\}$ in $K$. Since $F\subset \mathfrak{p}$, $a\in \V_K(F)$. And $f\in \I(\V(F))$ implies $f(a)=0$. Therefore $f\in \mathfrak{p}$.
\end{proof}
\begin{theorem}\label{apv-thm1}
The maps $X\mapsto \I(X)$ and $I\mapsto \V(I)$ define inclusion reversing bijections between the set of all $\p$-subvarieties of $\A_k^m$ and the set of all $\p$-perfect ideals of $k\{y_1,\ldots,y_m\}^{\p}$.
\end{theorem}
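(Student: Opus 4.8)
The plan is to deduce the theorem as a formal consequence of Proposition~\ref{prop} together with the definitions of the two maps. Concretely, I would verify four things in turn: (i) both assignments are well defined; (ii) both are inclusion reversing; (iii) $\I(\V(I))=I$ for every $\p$-perfect ideal $I$; (iv) $\V(\I(X))=X$ for every $\p$-subvariety $X$ of $\A_k^m$. Items (iii) and (iv) together say exactly that the two maps are mutually inverse, which, combined with (ii), yields the asserted inclusion-reversing bijection.

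For (i), that $\I(X)$ is a $\p$-perfect ideal is exactly Lemma~\ref{apv-lemma}, and that $\V(I)$ is a $\p$-subvariety of $\A_k^m$ is immediate from the definition of $\p$-subvariety. For (ii), if $X\subseteq Y$ are $\p$-subvarieties then every $\p$-polynomial vanishing on all points of $Y$ (over all $K\in\mathscr{E}_k$) vanishes on all points of $X$, so $\I(Y)\subseteq\I(X)$; dually, if $I\subseteq J$ are $\p$-perfect ideals then any point killing all of $J$ kills all of $I$, so $\V_K(J)\subseteq\V_K(I)$ for every $K$, i.e.\ $\V(J)\subseteq\V(I)$. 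Both are routine.

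For (iii), apply Proposition~\ref{prop} with $F=I$: it gives $\I(\V(I))=\{I\}^{\p}$. Since $\{I\}^{\p}$ is by definition the smallest $\p$-perfect ideal containing $I$ and $I$ itself is $\p$-perfect, we have $\{I\}^{\p}=I$, hence $\I(\V(I))=I$. For (iv), write $X=\V(F)$ for some $F\subseteq k\{y_1,\ldots,y_m\}^{\p}$ (possible since $X$ is a $\p$-subvariety). Then $F\subseteq\I(\V(F))=\I(X)$, so by (ii) $\V(\I(X))\subseteq\V(F)=X$; the reverse inclusion $X\subseteq\V(\I(X))$ holds because, by the very definition of $\I(X)$, every $f\in\I(X)$ vanishes at every point of $X(K)$ for every $K\in\mathscr{E}_k$. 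Hence $\V(\I(X))=X$ as functors.

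I do not expect a serious obstacle here: essentially all the content has already been extracted in Theorem~\ref{pi-thm1} and Proposition~\ref{prop}. The only mild points of care are (a) remembering that ``$\V(\I(X))=X$'' is an equality of functors, to be checked on $K$-points for each $K\in\mathscr{E}_k$, and (b) the elementary observation that a $\p$-perfect ideal equals its own $\p$-perfect closure, which is what upgrades $\I\circ\V$ from a closure operator to the identity.
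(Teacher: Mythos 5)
Your proposal is correct and follows essentially the same route as the paper: the paper likewise shows $\V(\I(X))=X$ by writing $X=\V(F)$ and using $F\subseteq\I(\V(F))$, and shows $\I(\V(I))=\{I\}^{\p}=I$ by invoking Proposition~\ref{prop}. You simply spell out the well-definedness and inclusion-reversal steps that the paper leaves implicit.
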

\begin{proof}
Clearly, $X\subseteq \V(\I(X))$. Let $X=\V(F)$ be a $\p$-subvarieties of $\A_k^m$. Since $F\subset \I(\V(F))$, we have
\[X=\V(F)\supseteq\V(\I(\V(F)))=\V(\I(X)).\]
Therefore, $\V(\I(X))=X$.

Let $I$ be a $\p$-perfect ideal of $k\{y_1,\ldots,y_m\}^{\p}$. Then by Proposition \ref{prop}, $\I(\V(I))=\{I\}^{\p}=I$.
\end{proof}

Suppose $I$ is a $\p$-ideal of $k\{y_1,\ldots,y_m\}^{\p}$. Let $k\{y_1,\ldots,y_m\}^{\p}/I$ be the quotient $\sigma$-ring of $k\{y_1,\ldots,y_m\}^{\p}$ by $I$ and $\pi\colon k\{y_1,\ldots,y_m\}^{\p}\rightarrow k\{y_1,\ldots,y_m\}^{\p}/I$ the quotient map. We define {\em $\p$-perfect ideals} and {\em $\p$-prime ideals} of $k\{y_1,\ldots,y_m\}^{\p}/I$ are $\sigma$-ideals of $k\{y_1,\ldots,y_m\}^{\p}/I$ whose preimages under $\pi$ are $\p$-perfect ideals and $\p$-prime ideals of $k\{y_1,\ldots,y_m\}^{\p}$ respectively.
\begin{cor}
Let $X$ be an affine $\p$-variety. Then the following map
\[Y\mapsto\{f\in k\{X\}\mid f(a)=0,\forall K\in \mathscr{E}_k, \forall a\in Y(K)\}\]
is an inclusion reversing bijection between the set of all $\p$-subvarieties of $X$ and the set of all $\p$-perfect ideals of $k\{X\}$.
\end{cor}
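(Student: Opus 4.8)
The plan is to obtain the corollary by relativizing Theorem~\ref{apv-thm1} from $\A_k^m$ to $X$. First I would fix a presentation $X=\V(F)\subseteq\A_k^m$; by Lemma~\ref{apv-lemma} the ideal $\I(X)$ is $\p$-perfect, so $k\{X\}=k\{y_1,\ldots,y_m\}^{\p}/\I(X)$, and I write $\pi$ for the quotient map. The key observation is that being a $\p$-subvariety of $X$ is the same as being a $\p$-subvariety $Y$ of $\A_k^m$ with $Y\subseteq X$: a $\p$-subvariety of $X$ is by definition a subfunctor of $X$ of the form $\V(G)$, and since $X$ is itself a subfunctor of $\A_k^m$, this is precisely the condition that $Y=\V(G)\subseteq\A_k^m$ satisfies $Y\subseteq X$.

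Next I would apply Theorem~\ref{apv-thm1}: the assignment $Y\mapsto\I(Y)$, with vanishing ideal computed in $k\{y_1,\ldots,y_m\}^{\p}$, is an inclusion-reversing bijection between $\p$-subvarieties of $\A_k^m$ and $\p$-perfect ideals of $k\{y_1,\ldots,y_m\}^{\p}$, with inverse $I\mapsto\V(I)$. Because these two maps are mutually inverse, $Y\subseteq X$ is equivalent to $\I(Y)\supseteq\I(X)$. Hence $Y\mapsto\I(Y)$ restricts to an inclusion-reversing bijection between the $\p$-subvarieties of $X$ and the $\p$-perfect ideals of $k\{y_1,\ldots,y_m\}^{\p}$ that contain $\I(X)$.

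Then I would invoke the ordinary correspondence theorem for ideals of a quotient ring, together with the definition of $\p$-perfect ideals of $k\{X\}$ as those $\sigma$-ideals whose $\pi$-preimage is $\p$-perfect: this yields an inclusion-preserving bijection $J\mapsto\pi(J)$ between the $\p$-perfect ideals of $k\{y_1,\ldots,y_m\}^{\p}$ containing $\I(X)$ and the $\p$-perfect ideals of $k\{X\}$. Composing the two bijections produces an inclusion-reversing bijection between $\p$-subvarieties of $X$ and $\p$-perfect ideals of $k\{X\}$, sending $Y$ to $\pi(\I(Y))$. It remains to identify this composite with the map in the statement: any element of $k\{X\}$ has the form $\bar f=\pi(f)$, and for $a\in X(K)$ one has $\bar f(a)=f(a)$, so for $Y\subseteq X$ the set $\{\bar f\in k\{X\}\mid \bar f(a)=0,\ \forall K\in\mathscr{E}_k,\ \forall a\in Y(K)\}$ equals $\pi(\{f\mid f(a)=0,\ \forall K,\ \forall a\in Y(K)\})=\pi(\I(Y))$, the equality being legitimate because $\ker\pi=\I(X)\subseteq\I(Y)$ (every element of $\I(X)$ vanishes on $Y\subseteq X$).

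I do not expect a genuine obstacle here; the whole argument is bookkeeping that transports Theorem~\ref{apv-thm1} through the quotient. The only points requiring a little care are the identification of $\p$-subvarieties of $X$ with $\p$-subvarieties of $\A_k^m$ contained in $X$, and the fact that the quotient correspondence respects the $\p$-perfect condition; but the latter is immediate from the definition of $\p$-perfect ideals of $k\{X\}$, so the proof stays short.
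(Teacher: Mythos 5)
Your proof is correct and matches the paper's intent: the paper itself disposes of this corollary with ``It is clear from the definitions and Theorem~\ref{apv-thm1},'' and your write-up simply makes explicit the bookkeeping (subvarieties of $X$ as subvarieties of $\A_k^m$ contained in $X$, the quotient correspondence for $\p$-perfect ideals via the definition as preimages under $\pi$, and the identification of $\pi(\I(Y))$ with the map in the statement) that the paper leaves to the reader.
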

\begin{proof}
It is clear from the definitions and Theorem \ref{apv-thm1}.
\end{proof}

Let $k\{y_1,\ldots,y_m\}^{\p}/I$ be a quotient $\sigma$-ring of $k\{y_1,\ldots,y_m\}^{\p}$. A {\em $\p$-term} and a {\em $\p$-monomial} of $k\{y_1,\ldots,y_m\}^{\p}/I$ are elements which are equal to the images of a $\p$-term and a $\p$-monomial of $k\{y_1,\ldots,y_m\}^{\p}$ under the quotient map respectively.
\begin{definition}
Let $k\{X\}$ and $k\{Y\}$ be two affine $k$-$P[\sigma]$-algebras. A $k$-$\sigma$-algebra homomorphism $\phi\colon k\{X\}\rightarrow k\{Y\}$ is called a {\em morphism of $k$-$P[\sigma]$-algebras} if $\phi$ maps $\p$-terms to $\p$-terms.
\end{definition}
\begin{definition}
Let $X\subseteq \mathbb{A}_k^m$ and $Y\subseteq \mathbb{A}_k^n$ be two affine $k$-$P[\sigma]$-varieties. A morphism of functors $f\colon X\rightarrow Y$ is called a {\em morphism of affine $k$-$P[\sigma]$-varieties} if there exist $\p$-terms $f_1,\ldots,f_n\in k\{y_1,\ldots,y_m\}^{P[\sigma]}$ such that $f(a)=(f_1(a),\ldots,f_n(a))$ for every $K\in \mathscr{E}_k$ and every $a\in X(K)$.
\end{definition}

Let $f\colon X\rightarrow Y$ be a morphism of affine $k$-$P[\sigma]$-varieties. Define
\[\phi\colon k\{z_1,\ldots,z_n\}^{P[\sigma]}\rightarrow k\{X\}, z_i\mapsto \bar{f_i}.\]
Then for $a\in X(K)$ and $g\in k\{z_1,\ldots,z_n\}^{P[\sigma]}$, $\phi(g)(a)=g(f_1(a),\ldots,f_n(a))=g(f(a))$. Because $f(a)\in Y(K)$, we have $\phi(g)(a)=0$ if $g\in \mathbb{I}(Y)$. So $\phi(g)=0$ for $g\in \mathbb{I}(Y)$. This shows that $\phi$ induces a morphism of $k$-$P[\sigma]$-algebras
\[f^*\colon k\{Y\}\rightarrow k\{X\}, \bar{z_i}\mapsto \bar{f_i}.\]
We call $f^*$ the morphism {\em dual} to $f$, and $f^*(g)=g\circ f$, for any $g\in k\{Y\}$.
\begin{theorem}
Let $k$ be a $\sigma$-field. The category of affine $k$-$P[\sigma]$-varieties is antiequivalent to the category of affine $k$-$P[\sigma]$-algebras.
\end{theorem}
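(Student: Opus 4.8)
The plan is to prove the anti-equivalence by exhibiting the contravariant functor $F$ that sends an affine $k$-$\p$-variety $X$ to its coordinate ring $k\{X\}$ and a morphism $f\colon X\to Y$ to the dual morphism $f^*\colon k\{Y\}\to k\{X\}$, and then to check that $F$ is well defined, functorial, faithful, full, and essentially surjective; a contravariant functor with these properties is an anti-equivalence. The well-definedness and functoriality are essentially already in hand: the construction just before the statement produces $f^*$ and shows it is a morphism of $k$-$\p$-algebras (it sends each $\bar z_i$, hence every $\p$-term, to a $\p$-term), and the relations $(g\circ f)^*=f^*\circ g^*$, $\mathrm{id}_X^*=\mathrm{id}_{k\{X\}}$ follow at once from $f^*(h)=h\circ f$. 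Essential surjectivity is also immediate from the definitions: every affine $k$-$\p$-algebra is, by definition, isomorphic as a $k$-$\p$-algebra to some $k\{y_1,\ldots,y_m\}^{\p}/\I(X)=k\{X\}=F(X)$.

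Next I would treat faithfulness and fullness. For faithfulness, if $f,g\colon X\to Y$ satisfy $f^*=g^*$, write $f=(f_1,\ldots,f_n)$ and $g=(g_1,\ldots,g_n)$ with $f_i,g_i$ $\p$-terms; then $\bar f_i=f^*(\bar z_i)=g^*(\bar z_i)=\bar g_i$, so $f_i-g_i\in\I(X)$, hence $f_i(a)=g_i(a)$ for every $a\in X(K)$ and every $K\in\mathscr{E}_k$, i.e.\ $f=g$. For fullness, let $\phi\colon k\{Y\}\to k\{X\}$ be a morphism of $k$-$\p$-algebras. Since each $z_i$ is a $\p$-monomial, $\phi(\bar z_i)$ is a $\p$-term of $k\{X\}$, hence the image of some $\p$-term $f_i\in k\{y_1,\ldots,y_m\}^{\p}$; set $f=(f_1,\ldots,f_n)$. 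The substitution $z_i\mapsto f_i$ induces a $k$-$\sigma$-algebra homomorphism $\psi\colon k\{z_1,\ldots,z_n\}^{\p}\to k\{y_1,\ldots,y_m\}^{\p}$, and the goal is to show $\overline{\psi(h)}=\phi(\bar h)$ for all $h\in k\{z_1,\ldots,z_n\}^{\p}$. Granting this, for $h\in\I(Y)$ we obtain $h(f(a))=\overline{\psi(h)}(a)=\phi(0)(a)=0$ for every $a\in X(K)$, so $f(a)\in\V_K(\I(Y))=Y(K)$ (using $\V(\I(Y))=Y$ from Theorem~\ref{apv-thm1}); thus $f$ is a morphism $X\to Y$, and by construction $f^*(\bar z_i)=\bar f_i=\phi(\bar z_i)$ for all $i$, which will give $f^*=\phi$ once the displayed identity is established.

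It therefore remains to prove $\overline{\psi(h)}=\phi(\bar h)$. By $k$-linearity it suffices to treat $\p$-monomials $h=\Y^{\bu}$, $\bu\in P[x]^m$, and these factor as products $\prod_i z_i^{g_i}$, so it is enough to check $\phi(\bar z_i^{\,g})=\overline{\psi(z_i^{\,g})}=\overline{f_i^{\,g}}$ for $g\in P[x]^*$, where, writing $f_i=c_i\Y^{\ba_i}$, the element $f_i^{\,g}$ is the $\p$-term $c_i^{\,g}\Y^{g\ba_i}$. For $g=x^j$ this is forced because $\phi$ commutes with $\sigma$; for general $g\in P[x]^*$ one compares $\phi(\bar z_i^{\,g})$ — which is a $\p$-term because $\phi$ preserves $\p$-terms — with $\overline{f_i^{\,g}}$ using the multiplicative relations $\bar z_i^{\,g}\bar z_i^{\,g'}=\bar z_i^{\,g+g'}$ and the fact that $\I(X)$ is a $\p$-perfect ideal (so that equalities of $\p$-monomials can be transported under the power operation; this is the point where Lemma~\ref{apv-lemma} and the structure of $\p$-perfect ideals enter).

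The routine part of the argument is the categorical skeleton — functoriality, faithfulness, and essential surjectivity — which proceeds exactly as in the classical and ordinary-difference cases. The genuinely new obstacle, and the step I expect to require the most care, is the identity $\phi(\bar z_i^{\,g})=\phi(\bar z_i)^{\,g}$ used in the proof of fullness: in the ordinary difference setting the dual of a morphism of varieties may be any $k$-$\sigma$-algebra homomorphism, whereas here one must show that a homomorphism merely \emph{preserving} $\p$-terms is automatically \emph{substitution of $\p$-terms}, so that it is recovered from its values on the $\bar z_i$ alone. Making this precise forces one to use that $\I(X)$ is $\p$-perfect in an essential way, and to keep careful track of how the power operation $(c\Y^{\bu})^{g}=c^{g}\Y^{g\bu}$ interacts with the defining ideal.
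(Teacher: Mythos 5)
Your overall strategy --- dualize via $X\mapsto k\{X\}$, $f\mapsto f^*$, then check functoriality, essential surjectivity, faithfulness and fullness --- is exactly the paper's route; the paper packages faithfulness and fullness as a single claim that $f\mapsto f^*$ is a bijection on Hom-sets, and treats essential surjectivity as immediate from the definition of an affine $k$-$\p$-algebra, just as you do. Your faithfulness argument coincides with the paper's injectivity argument. Where you differ is in making explicit the real content of the fullness direction, namely the identity $\phi(\bar z_i^{\,g})=\phi(\bar z_i)^{\,g}$ for all $g\in P[x]^*$. You are right that this is the crux: unlike the ordinary $\sigma$-case, $k\{z_1,\ldots,z_n\}^{\p}$ is \emph{not} $\sigma$-generated by $z_1,\ldots,z_n$ (for instance $z_i^{x-1}$ lies outside the $k$-$\sigma$-subalgebra generated by the $z_i$), so a $k$-$\sigma$-algebra homomorphism that merely preserves $\p$-terms is \emph{a priori} not determined by its values on the $\bar z_i$. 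The paper asserts ``Clearly $\phi=f^*$'' and does not supply this step.

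However, the sketch you offer for closing this gap does not go through as stated. Writing $g=g_1-g_2$ with $g_1,g_2\in\N[x]$, multiplicativity and $\sigma$-equivariance give $\phi(\bar z_i^{\,g})\,\bar f_i^{\,g_2}=\bar f_i^{\,g_1}$, hence $\bigl(\phi(\bar z_i^{\,g})-\bar f_i^{\,g}\bigr)\bar f_i^{\,g_2}=0$ in $k\{X\}$. You propose to cancel $\bar f_i^{\,g_2}$ using $\p$-perfectness of $\I(X)$; but the defining property of a $\p$-ideal (and Remark~\ref{pi-remark}) only lets one \emph{change the exponent} of a $\p$-monomial factor $\Y^{\bu}$ while retaining its nonzero positions --- it never lets one delete that factor. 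Concretely, writing $\phi(\bar z_i^{\,g})=d\bar{\Y}^{\bb}$ and $f_i=c_i\Y^{\ba_i}$, one has $\Y^{g_2\ba_i}\bigl(d\Y^{\bb}-c_i^{\,g}\Y^{g\ba_i}\bigr)\in\I(X)$, and $\p$-perfectness yields $\Y^{\bv}\bigl(d\Y^{\bb}-c_i^{\,g}\Y^{g\ba_i}\bigr)\in\I(X)$ for any $\bv\in P[x]^m$ whose nonzero positions contain those of $\ba_i$, but not $d\Y^{\bb}-c_i^{\,g}\Y^{g\ba_i}\in\I(X)$ unless the binomial itself has a common $\p$-monomial factor whose nonzero positions contain those of $\ba_i$, which need not happen when $\bb$ and $g\ba_i$ have different supports. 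So the decisive statement that fullness requires --- that a morphism of affine $k$-$\p$-algebras is determined by, and is substitution along, its values on the $\bar z_i$ --- remains unproved in your write-up. To be fair, the paper leaves the very same step unjustified; but since you explicitly raise it as the ``step requiring the most care,'' you should either prove it outright or record it as a lemma whose proof you still owe.
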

\begin{proof}
Let $X\subseteq \mathbb{A}_k^m$ and $Y\subseteq \mathbb{A}_k^n$ be two affine $k$-$P[\sigma]$-varieties. We need to show that
\[\Hom(X,Y)\rightarrow \Hom(k\{Y\},k\{X\}),f\mapsto f^*\]
is bijective. First for the injectivity, let $f,g\in \Hom(X,Y)$ such that $f^*=g^*$. Then $h(f(a))=f^*(h)(a)=g^*(h)(a)=h(g(a))$ for every $h\in k\{z_1,\ldots,z_n\}^{\p}$, $K\in \mathscr{E}_k$ and $a\in X(K)$. Choose $h$ to be the coordinate functions. This shows that $f=g$.

Now we show that the map is surjective. Let $\phi\colon k\{Y\}\rightarrow k\{X\}$ be a morphism of affine $k$-$P[\sigma]$-algebras, where $k\{Y\}=k\{z_1,\ldots,z_n\}^{P[\sigma]}/\mathbb{I}(Y), k\{X\}=k\{y_1,\ldots,y_m\}^{P[\sigma]}/\mathbb{I}(X)$. Suppose that $\phi(\bar{z_i})=\bar{f_i}\in k\{X\},i=1,\ldots,n, f_i$ $\p$-terms. Define $f\colon X\rightarrow \mathbb{A}_k^n$ by $f=(f_1,\ldots,f_n)$. It is easy to check that $f$ is actually mapping into $Y$. so $f\colon X\rightarrow Y$ is a morphism of $k$-$P[\sigma]$-varieties. Clearly $\phi=f^*$.
\end{proof}

Suppose that $R$ is an affine $k$-$P[\sigma]$-algebra. Let $\Spec^{P[\sigma]}(R)$ be the set of all $P[\sigma]$-prime ideals of $R$. Let $F\subseteq R$ and set
$$\mathcal{V}(F):=\{\mathfrak{p}\in \Spec^{P[\sigma]}(R)\mid F\subseteq \mathfrak{p}\}\subseteq \Spec^{P[\sigma]}(R).$$

By Theorem \ref{pi-thm1}, $\mathcal{V}(F)=\mathcal{V}(\{F\}^{P[\sigma]})$. The following lemma is easy to check.
\begin{lemma}\label{apv-lemma2}
Let $R$ be an affine $k$-$P[\sigma]$-algebra and $F,G,F_i\subseteq R$. Then
\begin{enumerate}
\item $\mathcal{V}({0})=\Spec^{P[\sigma]}(R)$ and $\mathcal{V}(R)=\varnothing$;
\item $\mathcal{V}(F)\cup \mathcal{V}(G)=\mathcal{V}(FG)$;
\item $\bigcap_i\mathcal{V}(F_i)=\mathcal{V}(\bigcup_iF_i)$.
\end{enumerate}
\end{lemma}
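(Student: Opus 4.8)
The plan is to verify each of the three identities directly from the definitions, mirroring the standard proofs for Zariski-type spectra while being careful about the $P[\sigma]$-structure. For item (1), note that the zero ideal $\{0\}$ is contained in every $P[\sigma]$-prime ideal, so $\mathcal{V}(\{0\}) = \Spec^{P[\sigma]}(R)$; and since $R$ itself is not a proper ideal, no $P[\sigma]$-prime ideal contains $R$ (a $P[\sigma]$-prime ideal is in particular a prime ideal, hence proper), so $\mathcal{V}(R) = \varnothing$. This step is essentially immediate.

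For item (3), I would argue by a chain of equivalences: $\mathfrak{p} \in \bigcap_i \mathcal{V}(F_i)$ if and only if $F_i \subseteq \mathfrak{p}$ for every $i$, if and only if $\bigcup_i F_i \subseteq \mathfrak{p}$, if and only if $\mathfrak{p} \in \mathcal{V}(\bigcup_i F_i)$. No properties of $P[\sigma]$-prime ideals beyond membership are needed here, so this is routine.

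For item (2), the direction $\mathcal{V}(F) \cup \mathcal{V}(G) \subseteq \mathcal{V}(FG)$ is easy: if $\mathfrak{p} \supseteq F$ or $\mathfrak{p} \supseteq G$, then since $\mathfrak{p}$ is an ideal, $FG \subseteq \mathfrak{p}$ in either case. The reverse inclusion is where the prime condition enters: suppose $FG \subseteq \mathfrak{p}$ but $F \not\subseteq \mathfrak{p}$; pick $f \in F \setminus \mathfrak{p}$. Then for every $g \in G$, $fg \in FG \subseteq \mathfrak{p}$, and since $\mathfrak{p}$ is prime and $f \notin \mathfrak{p}$, we get $g \in \mathfrak{p}$; hence $G \subseteq \mathfrak{p}$, so $\mathfrak{p} \in \mathcal{V}(G)$. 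Thus $\mathcal{V}(FG) \subseteq \mathcal{V}(F) \cup \mathcal{V}(G)$.

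The only mild subtlety — and the one place requiring care rather than a genuine obstacle — is the interpretation of $FG$ as the set of products $\{fg \mid f \in F,\ g \in G\}$ (not the ideal it generates), which is consistent with the notation already used in Proposition \ref{pi-prop} and the preceding constructions; with that reading the argument above goes through verbatim. Since the lemma is explicitly flagged in the text as "easy to check," I would present these three verifications compactly, emphasizing the primality step in item (2) and treating (1) and (3) as one-line consequences of the definitions.
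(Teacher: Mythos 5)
Your proof is correct, and since the paper itself provides no argument (it only remarks that the lemma is easy to check), your verification supplies exactly the standard reasoning that is implicit there: (1) and (3) are immediate from the definitions, and (2) uses that a $P[\sigma]$-prime ideal is in particular prime, with $FG$ read as the set of products $\{fg \mid f\in F,\ g\in G\}$, consistent with the notation in Proposition \ref{pi-prop}.
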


Lemma \ref{apv-lemma2} shows that $\Spec^{P[\sigma]}(R)$ is a topological space with closed sets of the forms $\mathcal{V}(F)$.

For $f\in R$, set $D^{P[\sigma]}(f):=\Spec^{P[\sigma]}(R)\backslash \mathcal{V}(f)=\{\mathfrak{p}\in \Spec^{P[\sigma]}(R)\mid f\notin \mathfrak{p}\}$. We call $D^{P[\sigma]}(f)$ a {\em basis open subset} of $\Spec^{P[\sigma]}(R)$.
\begin{definition}\label{apv-def}
Let $X$ be an affine $k$-$P[\sigma]$-variety. Then the {\em topological space} of $X$ is $\Spec^{P[\sigma]}(k\{X\})$ equipped with the above topology.
\end{definition}

Similarly to Theorem \ref{pdag-thm}, we have
\begin{theorem}
Let $X$ be an affine $k$-$P[\sigma]$-variety. There is a natural bijection between the set of equivalence classes of solutions of $\mathbb{I}(X)$ and $\Spec^{P[\sigma]}(k\{X\})$.
\end{theorem}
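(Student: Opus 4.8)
The plan is to imitate the proof of Theorem~\ref{pdag-thm}, promoting ``$\sigma$-prime'' to ``$P[\sigma]$-prime'' at every step and using Lemma~\ref{apv-lemma1} in place of Lemma~\ref{pdag-lemma1}. First I would build the map. Since $\I(X)=\{F\}^{\p}$ when $X=\V(F)$, one has $\V_K(\I(X))=X(K)$ for every $K\in\mathscr{E}_k$; given a solution $a\in\V_K(\I(X))$, Lemma~\ref{apv-lemma1} furnishes a $k$-$\sigma$-algebra homomorphism $\varphi_a\colon k\{X\}\rightarrow K$ with $\varphi_a(\bar y)=a$, and I set $\mathfrak{p}_a:=\ker(\varphi_a)$. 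I would then verify $\mathfrak{p}_a\in\Spec^{P[\sigma]}(k\{X\})$: the quotient $k\{X\}/\mathfrak{p}_a$ embeds in the field $K$, so $\mathfrak{p}_a$ is prime; it is reflexive because a field endomorphism is injective, so $\sigma(\varphi_a(b))=0$ forces $\varphi_a(b)=0$; and it is a $P[\sigma]$-ideal because if $\Y^{\bu}f\in\mathfrak{p}_a$, i.e.\ $a^{\bu}f(a)=0$, then for $g\in P[x]^*$ the convention $a^g=0\iff a=0$ together with the fact that $gu_i\neq 0$ exactly when $u_i\neq 0$ gives $a^{g\bu}=0$ whenever $a^{\bu}=0$, whereas if $a^{\bu}\neq 0$ then $f(a)=0$ in the field $K$; in either case $a^{g\bu}f(a)=0$, i.e.\ $\Y^{g\bu}f\in\mathfrak{p}_a$.

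Next I would show that $a\mapsto\mathfrak{p}_a$ descends to a well-defined injection on equivalence classes. If $a\in\V_K(\I(X))$ and $b\in\V_L(\I(X))$ are equivalent via a $k$-$\sigma$-isomorphism $\psi\colon k\langle a\rangle\rightarrow k\langle b\rangle$ with $\psi(a)=b$, then $\psi\circ\varphi_a$ and $\varphi_b$ are both $k$-$\sigma$-homomorphisms $k\{X\}\rightarrow k\langle b\rangle$ sending $\bar y$ to $b$, hence equal; since $\psi$ is injective, $\mathfrak{p}_a=\mathfrak{p}_b$. Conversely, $\varphi_a$ identifies $k\{X\}/\mathfrak{p}_a$ with the $k$-$\sigma$-subalgebra $k\{a\}\subseteq K$, whose fraction field (with the uniquely extended $\sigma$) is $k\langle a\rangle$; so if $\mathfrak{p}_a=\mathfrak{p}_b$ the induced $k$-$\sigma$-isomorphism $k\{a\}\cong k\{b\}$ sending $a$ to $b$ extends uniquely to a $k$-$\sigma$-isomorphism $k\langle a\rangle\cong k\langle b\rangle$, witnessing $a\sim b$.

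Finally I would check surjectivity. Given $\mathfrak{p}\in\Spec^{P[\sigma]}(k\{X\})$, its preimage in $k\{y_1,\ldots,y_m\}^{\p}$ is $P[\sigma]$-prime, so in particular $\mathfrak{p}$ is a prime $\sigma$-ideal, $k\{X\}/\mathfrak{p}$ is a $\sigma$-domain, and $K:=\Frac(k\{X\}/\mathfrak{p})$ is naturally a $\sigma$-field extension of $k$. The image $a$ of $\bar y$ in $K$ satisfies $f(a)=0$ for all $f\in\I(X)$ because $\I(X)$ maps to $0$ in $k\{X\}/\mathfrak{p}$, so $a\in\V_K(\I(X))$, and by construction $\varphi_a$ is the composite $k\{X\}\twoheadrightarrow k\{X\}/\mathfrak{p}\hookrightarrow K$, whose kernel is $\mathfrak{p}$; hence $\mathfrak{p}_a=\mathfrak{p}$.

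The argument is essentially the one behind Theorem~\ref{pdag-thm}, so I expect nothing here to be genuinely hard; the only new point is the verification that $\mathfrak{p}_a$ is a $P[\sigma]$-ideal, and the main thing to be careful about is the bookkeeping around the convention $a^g=0\iff a=0$ and the behaviour of supports of $P[\sigma]$-monomials under multiplication by elements of $P[x]^*$, the remainder being formal.
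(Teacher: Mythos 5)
Your proof is correct and is exactly the argument the paper intends: the paper gives no explicit proof here, merely citing the analogy with Theorem~\ref{pdag-thm} (itself deferred to Wibmer), and your write-up carries out that adaptation faithfully, including the one genuinely new verification — that $\ker(\varphi_a)$ is a $P[\sigma]$-ideal — via the convention $a^g=0\iff a=0$ and the fact that multiplication by $g\in P[x]^*$ preserves the support of a $P[\sigma]$-monomial.
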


As before, we shall not strictly distinguish between a $P[\sigma]$-variety and its topological space. Namely, we will use $X$ to mean the $P[\sigma]$-variety or its topological space.

\section{Affine Toric $P[\sigma]$-Varieties}
In this section, we will study the properties of affine toric $\p$-varieties. Every affine toric $\p$-variety corresponds to an affine $P[x]$-semimodule. The story is very similar to what we do for affine toric $\sigma$-varieties. But here, more properties will be proved. $k$ is always assumed to be a fixed $\sigma$-field.
\subsection{Affine Toric $\p$-Varieties and Affine $P[x]$-Semimodules}
We start by recalling some basic facts about affine toric $\sigma$-varieties which are proved in \cite{dd-tdv}.

Let $(\A^*)^n$ be the functor from $\mathscr{E}_k$ to $\mathscr{E}_k^n$ satisfying $(\A^*)^n(K)=(K^*)^n$ where $K\in \mathscr{E}_k$ and $K^*=K\backslash \{0\}$. Let $U=\{\bu_1,\ldots,\bu_m\}\subset\Z[x]^n$ and $\T=(t_1,\ldots,t_n)$ an $n$-tuple of $\sigma$-indeterminates. We define the following map
\begin{equation}\label{tvsm-equ1}
\theta\colon(\A^*)^n \longrightarrow \A^m,  \T \mapsto\T^{U} = (\T^{\bu_1},  \ldots, \T^{\bu_m}).
\end{equation}
The functor $T_U^*$ from $\mathscr{E}_k$ to $\mathscr{E}_k^m$ with $T_U^*(K)=\image(\theta_K)$ is a {\em quasi $\sigma$-torus}.

\begin{definition}
Given a finite set $U\subseteq \Z[x]^n$, the {\em affine toric $\sigma$-variety} $X_U$ is defined to be the Cohn closure of the image of the map $\theta$ from (\ref{tvsm-equ1}) in $\A^m$.
\end{definition}


Given a finite set $U\subseteq \Z[x]^n$, recall that the affine $\mathbb{N}[x]$-semimodule generated by $U$ is $S=\mathbb{N}[x](U)=\{\sum_{i=1}^m g_i\bu_i\mid g_i\in \mathbb{N}[x], 1\le i\le m\}$. For every affine $\mathbb{N}[x]$-semimodule $S$, there is a $k$-$\D$-algebra $k[S]$ with $S$ as a basis, that is
\[k[S]:=\bigoplus_{\bu\in S}k\chi^{\bu}=\{\sum_{\bu\in S}c_{\bu}\chi^{\bu}\mid c_{\bu}\in k \textrm{ and } c_{\bu}=0 \textrm{ for all but finitely many } \bu\}.\]

In \cite{dd-tdv}, it is proved that an affine $\D$-variety $X$ is toric if and only if there exists an affine $\N[x]$-semimodule $S$ such that $X\simeq\Spec^{\D}(k[S])$. Furthermore, the category of affine toric $\sigma$-varieties with toric morphisms is antiequivalent to the category of affine $\N[x]$-semimodules with $\N[x]$-semimodule morphisms.

Each affine toric $\sigma$-variety contains a $\D$-torus as an open subset and extends the group action on itself. In \cite{dd-tdv}, it is proved that an affine $\sigma$-variety $T$ is a $\sigma$-torus if and only if there exists a $\Z[x]$-lattice $M$ such that $T\simeq \Spec^{\sigma}(k[M])$.

\begin{remark}
Let $T=\Spec^{\sigma}(k[M])$ be a $\sigma$-torus. A {\em character} of $T$ is a morphism of $\sigma$-algebraic groups $\chi\colon T\rightarrow (\A^*)^1$. Denote all characters of $T$ by $X(T)$. Then $X(T)\subseteq k[M]$. Every $\bu\in M$ gives a character $\chi^{\bu}\colon T\rightarrow (\A^*)^1$ which satisfies that for each $K\in\mathscr{E}_k$ and an element $\phi$ of $T(K)$, $\chi^{\bu}(\phi)=\phi(\bu)\in K^*$. Actually, all characters of $T$ arise in this way. Thus $X(T)\simeq M$. In particular, $X((\A^*)^n)\simeq \Z[x]^n$.
\end{remark}

Now we give the definition of affine toric $P[\sigma]$-varieties.
\begin{definition}
An affine $P[\sigma]$-variety over the $\sigma$-field $k$ is said to be {\em toric} if it is the closure of a quasi $\sigma$-torus $T_U^*\subseteq \A^m$ in $\A^m$ under the topology in Definition \ref{apv-def}. More precisely,  let
\begin{equation}\label{eq-atpv0}
J_U:=\{f\in k\{y_1,\ldots,y_m\}^{P[\sigma]}\mid f(\T^{\bu_1},\ldots,\T^{\bu_m})=0\}.
\end{equation}
Then the affine toric $P[\sigma]$-variety defined by $U$ is $X_U=\V(J_U)$.
\end{definition}

\begin{example}
$\A^m$ is an affine toric $\p$-variety with quasi $\D$-torus $(\A^*)^m$.
\end{example}

For $\bv\in\Z[x]^m$, we denote $\bv_+\in P[x]^m$ with $(\bv_+)_i=\bv_i$ if $\bv_i\in P[x]$, and $(\bv_+)_i=0$ otherwise; and denote $\bv_-\in P[x]^m$ with $(\bv_-)_i=-\bv_i$ if $\bv_i\notin P[x]$, and $(\bv_-)_i=0$ otherwise.
\begin{definition}
Given a $\Z[x]$-lattice $L\subseteq \Z[x]^m$, we define a binomial $P[\sigma]$-ideal $J_L\subseteq k\{y_1,\ldots,y_m\}^{\p}$ associated with $L$
$$J_L:=[\Y^{\bv_1}-\Y^{\bv_2}\mid \bv_1-\bv_2\in L, \bv_1,\bv_2\in P[x]^m]=[\Y^{\bv_+}-\Y^{\bv_-}\mid \bv\in L].$$
$L$ is called the {\em support lattice} of $J_L$. If $L$ is a toric $\Z[x]$-lattice, then $J_L$ is called a {\em toric $P[\sigma]$-ideal}.
\end{definition}

\begin{lemma}\label{atpv-lemma1}
Let $X_U$ be the affine toric $\p$-variety defined in (\ref{eq-atpv0}). Then $J_U=\I(X_U)$ is a toric $\p$-ideal whose support lattice is $L=\Syz(U)$.
\end{lemma}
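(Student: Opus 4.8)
The plan is to show the three assertions in turn: (i) $J_U$ is a $P[\sigma]$-ideal (in fact a $\p$-prime ideal), (ii) $J_U = \I(X_U)$, and (iii) as a binomial ideal, $J_U$ equals $J_L$ for $L = \Syz(U)$, which is toric by Remark \ref{pi-re1}. For (ii), observe that $X_U = \V(J_U)$ by definition, so $\I(X_U) = \I(\V(J_U)) = \{J_U\}^{\p}$ by Proposition \ref{prop}; thus it suffices to prove $J_U$ is already $\p$-perfect, and the natural route is to show it is $\p$-prime. The ring map $\varphi\colon k\{y_1,\ldots,y_m\}^{\p}\to k\langle t_1,\ldots,t_n\rangle$ sending $y_i \mapsto \T^{\bu_i}$ has kernel exactly $J_U$, and since the target is a domain (indeed a $\sigma$-field extension of $k$, once one checks $\varphi$ extends to a $\sigma$-morphism — the $\sigma^j(t_i)$ for the relevant range of $j$ can be taken algebraically independent), $J_U$ is a prime $\sigma$-ideal. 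One then checks reflexivity (if $\sigma(a) \in J_U$ then $\varphi(\sigma(a)) = \sigma(\varphi(a)) = 0$, and $\sigma$ is injective on the domain, so $\varphi(a) = 0$) and the $\p$-ideal property: if $\Y^{\bu}f \in J_U$, then $\varphi(\Y^{\bu})\varphi(f) = 0$ in the domain, and $\varphi(\Y^{\bu}) = \T^{\sum u_i\bu_i}$ is nonzero (a unit), so $\varphi(f) = 0$, hence $\varphi(\Y^{g\bu}f) = \varphi(\Y^{g\bu})\varphi(f) = 0$ for any $g \in P[x]^*$. So $J_U$ is $\p$-prime, hence $\p$-perfect, giving (ii).

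For (iii), I would prove $J_L \subseteq J_U$ and $J_U \subseteq J_L$ separately. The inclusion $J_L \subseteq J_U$ is a direct computation: for $\bv \in L = \Syz(U)$, write $\bv = \bv_+ - \bv_-$ with $\bv_+,\bv_- \in P[x]^m$; then $\sum_i (\bv_+)_i\bu_i = \sum_i(\bv_-)_i\bu_i$ since $U\bv = 0$, so $\varphi(\Y^{\bv_+}) = \varphi(\Y^{\bv_-})$, i.e.\ $\Y^{\bv_+} - \Y^{\bv_-} \in \ker\varphi = J_U$. For the reverse inclusion, I would take $f \in J_U$ and run the standard binomial-ideal argument: group the $\p$-terms of $f$ by the value of their exponent vector under the linear map $\bw \mapsto U\bw \in \Z[x]^n$. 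Since $\varphi(f) = 0$ and distinct values of $U\bw$ give $k$-linearly independent $\sigma$-monomials $\T^{U\bw}$ in the domain, the terms in each group must individually cancel, so $f$ is a $k$-linear combination of differences $\Y^{\bw} - \Y^{\bw'}$ with $U\bw = U\bw'$, i.e.\ $\bw - \bw' \in L$; each such difference lies in $J_L$ by definition, so $f \in J_L$.

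**The main obstacle** I anticipate is making the domain argument fully rigorous in the $\p$-setting rather than the ordinary $\sigma$-polynomial setting — specifically, pinning down the target ring of $\varphi$ and verifying that the images $\T^{\bu_i}$ of the $P[\sigma]$-generators, together with $\sigma$, really do generate a domain in which $\sigma$ is injective and in which the $\sigma$-monomials $\T^{\bw}$ for distinct $\bw$ (now with $P[x]$, not $\N[x]$, exponents, after clearing denominators) remain $k$-linearly independent. One must be careful that $\T^{\bu_i}$ can have negative exponents in the $t_j$, so the target should be something like the $\sigma$-Laurent ring $k[t_1^{\pm},\ldots,t_n^{\pm},\sigma(t_1)^{\pm},\ldots]$ or its fraction field; once the correct ambient domain is identified, linear independence of distinct Laurent $\sigma$-monomials is standard and the rest is bookkeeping. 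A secondary point worth stating carefully is why $L = \Syz(U)$ rather than some larger saturation — but this is automatic since $\Syz(U)$ is already $\Z[x]$-saturated (Remark \ref{pi-re1}), so no saturation step is needed and $J_U = J_L$ on the nose with $L$ toric.
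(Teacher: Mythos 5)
Your proposal is correct and follows essentially the same route as the paper: the inclusion $J_L\subseteq J_U$ by the direct computation $\T^{U\bv}=1$ for $\bv\in\Syz(U)$, and the reverse inclusion $J_U\subseteq J_L$ via a $\Z[x]^n$-grading/grouping argument on the exponent vectors under $\bw\mapsto U\bw$, with the ambient domain taken to be the $\sigma$-Laurent ring $k\{t_1^{\pm1},\ldots,t_n^{\pm1}\}$ (which resolves the obstacle you flag, and is exactly the target the paper uses). The only difference is that you make explicit the $\p$-primeness of $J_U$ to justify $J_U=\I(X_U)$, a step the paper leaves implicit when it says ``it suffices to show $J_U=J_L$''; this is a harmless but welcome bit of extra rigor, amounting to the observation (as in Lemma \ref{apv-lemma}) that the kernel of evaluation at the point $\T^U$ in a $\sigma$-domain with injective $\sigma$ is automatically $\p$-prime.
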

\begin{proof}
By Remark \ref{pi-re1}, $L$ is a toric $\Z[x]$-lattice. Then it suffices to show that $J_U=J_L$, where $J_U$ is defined in (\ref{eq-atpv0}).
For $\bv\in L$, we have $(\Y^{\bv}-1)(\T^U)=(\T^{U})^{\bv}-1=\T^{U\bv}-1=0$.
As a consequence, $(\Y^{\bv_+}-\Y^{\bv_-})(\T^U)=0$ and $\Y^{\bv_+}-\Y^{\bv_-}\in J_U$ with $\bv\in L$. Since $J_L$ is generated by $\Y^{\bv_+}-\Y^{\bv_-}$ for $\bv\in L$, we have $J_L\subseteq J_U$.

To prove the other direction, let us consider the following map
\[\theta\colon k\{y_1,\ldots,y_m\}^{\p}\rightarrow k\{t_1^{\pm1},\ldots,t_n^{\pm1}\}, f\mapsto f(\T^{\bu_1},\ldots,\T^{\bu_m}).\]
Define a grading on $k\{y_1,\ldots,y_m\}^{\p}$ by $\deg(\Y^{\bv})=\overline{U}\bv\in\Z[x]^n$. Then $k\{y_1,\ldots,y_m\}^{\p}$ and $k\{t_1^{\pm1},\ldots,t_n^{\pm1}\}$ are both $\Z[x]^n$-graded and $\theta$ is a homogeneous map of degree $\mathbf{0}$. It follows that the kernel of $\theta$ is homogeneous. So an element of $\ker(\theta)$ of degree $\bu$ can be written as $\sum_{\theta(\bv)=\bu}\alpha_{\bv}\Y^{\bv}$ with $\sum_{\bv}\alpha_{\bv}=0$. Such an element is in $J_L$. Hence $J_U=\ker(\theta)\subseteq J_L$.
\end{proof}

The following lemma shows that the inverse of Lemma \ref{atpv-lemma1} is also valid.
\begin{lemma}\label{atpv-lemma2}
If $I$ is a toric $\p$-ideal in $k\{y_1,\ldots,y_m\}^{\p}$, then $\V(I)$ is an affine toric $\p$-variety.
\end{lemma}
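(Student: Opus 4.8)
The plan is to recover a parametrization of $\V(I)$ by $P[\sigma]$-monomials from the toric $\Z[x]$-lattice $L$ that is the support lattice of $I$, and then to identify $\V(I)$ with the affine toric $\p$-variety $X_U$ for a suitable $U$. Since $I = J_L$ with $L \subseteq \Z[x]^m$ toric, i.e. $\Z[x]$-saturated, Remark~\ref{pi-re2} applies and gives $(L^C)^C = L$. By Lemma~\ref{pzl-lemma2}, $L^C$ is a free $\Z[x]$-module of rank $n := m - \rank(L)$; fix a basis and assemble it as the columns of a matrix, equivalently view $L^C$ as generated by $\bw_1,\dots,\bw_n \in \Z[x]^m$. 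The transpose data then yield vectors $\bu_1,\dots,\bu_m \in \Z[x]^n$ (the rows of the $m\times n$ matrix whose columns are the $\bw_j$), and I claim $L = \Syz(U)$ for $U = \{\bu_1,\dots,\bu_m\}$. Indeed, $\bv \in \Syz(U)$ means $\langle \bu_i, \bv\rangle$-type conditions forcing $\bv \perp L^C$, i.e. $\bv \in (L^C)^C = L$; conversely $L \subseteq \Syz(U)$ is immediate from $L^C$ annihilating $L$. The saturation hypothesis is exactly what makes the double-perp collapse, so this is where toricity of $I$ is used.

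Next I would invoke Lemma~\ref{atpv-lemma1}: for this $U$, the affine toric $\p$-variety $X_U = \V(J_U)$ has $J_U = \I(X_U) = J_{\Syz(U)} = J_L = I$. By Theorem~\ref{apv-thm1} the maps $X \mapsto \I(X)$ and $I \mapsto \V(I)$ are mutually inverse bijections between $\p$-subvarieties of $\A_k^m$ and $\p$-perfect ideals, so from $\I(X_U) = I$ we get $\V(I) = \V(\I(X_U)) = X_U$. Hence $\V(I)$ is an affine toric $\p$-variety, as desired.

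There is a subtlety to handle before quoting Lemma~\ref{atpv-lemma1}, namely that its hypotheses should be met: one must know that a toric $\p$-ideal is determined by its support lattice and that $J_L$ is genuinely the vanishing ideal of the parametrized variety, not merely contained in it. This is precisely the content established in the proof of Lemma~\ref{atpv-lemma1} (the grading argument showing $\ker(\theta) \subseteq J_L$), so it suffices to cite it once $L = \Syz(U)$ is in hand. I expect the main obstacle to be the bookkeeping in the first step: extracting from the free module $L^C$ an explicit finite set $U \subseteq \Z[x]^n$ with $\Syz(U) = L$, and verifying the two inclusions cleanly using only $(L^C)^C = L$ and the definition of the syzygy module. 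Everything after that is a formal consequence of the antiequivalence already recorded in Theorem~\ref{apv-thm1} and of Lemma~\ref{atpv-lemma1}.
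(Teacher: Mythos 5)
Your proposal is correct and follows essentially the same route as the paper: you take generators of $L^C$, form $U$ from the rows of the corresponding matrix, use toricity of $L$ to get $\Syz(U)=(L^C)^C=L$, and then conclude via Lemma~\ref{atpv-lemma1} together with Theorem~\ref{apv-thm1}. The only cosmetic difference is that you fix a basis of the free module $L^C$ and spell out the double-perp argument that the paper asserts in one line.
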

\begin{proof}
Since $I$ is a toric $\p$-ideal, then the $\Z[x]$-lattice corresponding to $I$, denoted by $L$, is toric. Suppose $V=\{\bv_{1}, \ldots, \bv_{n}\}\subset \Z[x]^m$ is a set of generators of $L^C$. Regard $V$ as a matrix with columns $\bv_i$ and let $U=\{\bu_1,\ldots,\bu_m\}\subset \Z[x]^n$ be the set of the row vectors of $V$. Consider the affine toric $\p$-variety  $X_U$ defined by $U$. To prove the lemma, it suffices to show that $X_U=\V(I)$ or $I_U=I$. Since toric $\p$-ideals and toric $\Z[x]$-lattices are in a one-to-one correspondence, we only need to show $\Syz(U)=L$. This is clear since $\Syz(U)=\ker(V^{\tau})=(L^C)^C=L$ by Remark \ref{pi-re2}.
\end{proof}

Combining Lemma \ref{atpv-lemma1} and Lemma \ref{atpv-lemma2}, we have
\begin{theorem}\label{th-tvi}
An affine $P[\sigma]$-variety $X$ over $k$ is toric if and only if $\I(X)$ is a toric $P[\sigma]$-ideal.
\end{theorem}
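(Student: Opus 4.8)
The plan is to obtain the theorem as a direct corollary of Lemmas \ref{atpv-lemma1} and \ref{atpv-lemma2}, which between them supply the two implications. The forward implication is handled by Lemma \ref{atpv-lemma1} together with the definition of an affine toric $\p$-variety, and the converse by Lemma \ref{atpv-lemma2} together with the $\p$-subvariety$/\p$-perfect-ideal dictionary of Theorem \ref{apv-thm1}. So the proof will amount to assembling these pieces in the right order; no new computation is needed.

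For the ``only if'' direction, I would start from an affine toric $\p$-variety $X$. By definition this means $X = X_U = \V(J_U)$ for some finite set $U \subseteq \Z[x]^n$, with $J_U$ the ideal of (\ref{eq-atpv0}). Lemma \ref{atpv-lemma1} states precisely that $\I(X_U) = J_U$ and that $J_U$ is a toric $\p$-ideal, with support lattice $\Syz(U)$. Reading off the conclusion, $\I(X)$ is a toric $\p$-ideal. The only thing to be careful about is that the vanishing ideal of the topological closure $X_U$ genuinely equals $J_U$; but that identification is already contained in Lemma \ref{atpv-lemma1}, so nothing further is required here.

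For the ``if'' direction, assume $\I(X)$ is a toric $\p$-ideal. By Lemma \ref{apv-lemma}, $\I(X)$ is a $\p$-perfect ideal, and hence by the inclusion-reversing bijection of Theorem \ref{apv-thm1} we recover $X = \V(\I(X))$. Now apply Lemma \ref{atpv-lemma2} with $I = \I(X)$: it gives that $\V(I)$ is an affine toric $\p$-variety, i.e. $X = \V(\I(X))$ is toric. Since both implications are immediate from already-established facts, there is no genuine obstacle; the single point of care is the bookkeeping identity $X = \V(\I(X))$, which rests on Theorem \ref{apv-thm1} and on Lemma \ref{apv-lemma}. I would also note explicitly that ``toric'' is a property of the fixed embedding $X \subseteq \A_k^m$ (it refers to a monomial parametrization of a quasi $\sigma$-torus), so both hypothesis and conclusion are statements relative to that embedding and no invariance-of-$X$ argument is needed.
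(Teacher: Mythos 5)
Your proof is correct and follows exactly the route the paper takes: the paper simply states Theorem~\ref{th-tvi} as the result of ``Combining Lemma~\ref{atpv-lemma1} and Lemma~\ref{atpv-lemma2}.'' You have merely spelled out the bookkeeping (via Lemma~\ref{apv-lemma} and Theorem~\ref{apv-thm1}) that the paper leaves implicit.
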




Now we introduce the concept of affine $P[x]$-semimodules.

$S\subseteq \mathbb{Z}[x]^n$ is called a {\em $P[x]$-semimodule} if it satisfies (i) if $\bu,\bv\in S$, then $\bu+\bv\in S$, and (ii) if $g\in P[x]$ and $\bu\in S$, then $g\bu\in S$. Moreover, if there exists a finite subset $U=\{\bu_1,\ldots,\bu_m\}\subset \mathbb{Z}[x]^n$ such that $S=P[x](U)=\{\sum_{i=1}^m g_i\bu_i\mid g_i\in P[x]\}$, then $S$ is called an {\em affine $P[x]$-semimodule}. If $S$ is an affine $P[x]$-semimodule, let $S^{md}=\{\bu-\bv\mid \bu,\bv\in S\}$ be the $\Z[x]$-lattice generated by $S$, and define $\rank(S)=\rank(S^{md})$. A map $\phi\colon S\rightarrow S'$ between two $P[x]$-semimodules is a {\em $P[x]$-semimodule morphism} if $\phi(\bu+\bv)=\phi(\bu)+\phi(\bv),\phi(g\bu)=g\phi(\bu)$ for all $\bu,\bv\in S,g\in P[x]$.
\begin{remark}
For an affine $P[x]$-semimodule $S=P[x](\{\bu_1,\ldots,\bu_m\})$, if some $\bu_i$ can be generated by the other $\{\bu_j\}_{j\neq i}$, i.e., there exist $g_j\in P[x], j\neq i$ such that $\bu_i=\sum_{j\neq i} g_j\bu_j$, then we can delete $\bu_i$ from $\{\bu_1,\ldots,\bu_m\}$ to generate the same affine $P[x]$-semimodule. In the following, we always assume that there doesn't exist such $\bu_i$ in the generating set of $S$.
\end{remark}

For every affine $P[x]$-semimodule $S$, we define a {\em $P[x]$-semimodule algebra} $k[S]$ which is the vector space over $k$ with $S$ as a basis and multiplication induced by the addition of $S$. More concretely,
\[k[S]:=\bigoplus_{\bu\in S}k\chi^{\bu}=\{\sum_{\bu\in S}c_{\bu}\chi^{\bu}\mid c_{\bu}\in k \textrm{ and } c_{\bu}=0 \textrm{ for all but finitely many } \bu\},\]
with multiplication induced by
\[\chi^{\bu_1}\cdot\chi^{\bu_2}=\chi^{\bu_1+\bu_2}.\]
Make $k[S]$ to be a $k$-$\sigma$-algebra by defining
\[\sigma(\chi^{\bu})=\chi^{x\bu},\textrm{ for } \bu\in S.\]

Suppose $S=P[x](U)=P[x](\{\bu_1,\ldots,\bu_m\})$, then $k[S]=k\{\chi^{\bu_1},\ldots,\chi^{\bu_m}\}^{\p}$. When an embedding $S\rightarrow \Z[x]^n$ is given, it induces an embedding $k[S]\rightarrow k[\Z[x]^n]\simeq k\{t_1^{\pm 1},\ldots,t_n^{\pm 1}\}$ where $\T=\{t_1,\ldots,t_n\}$ is a set of $\sigma$-indeterminates. Therefore, $k[S]$ is a $k$-$\sigma$-subalgebra of $k\{t_1^{\pm 1},\ldots,t_n^{\pm 1}\}$ and it follows that $k[S]$ is a $\sigma$-domain. Also, we can view $k[S]$ as an $S$-graded ring. We will see that $k[S]$ is actually the $P[\sigma]$-coordinate ring of an affine toric $P[\sigma]$-variety.
\begin{theorem}\label{aspv-thm3}
Let $X$ be an affine $k$-$P[\sigma]$-variety. Then $X$ is toric if and only if there exists an
affine $P[x]$-semimodule $S$ such that $X\simeq \Spec^{P[\sigma]}(k[S])$.
Equivalently, the $P[\sigma]$-coordinate ring of $X$ is $k[S]$.
\end{theorem}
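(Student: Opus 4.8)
The plan is to prove both directions of the equivalence, using the structural results already established about affine toric $\p$-varieties and their support lattices.

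For the ``only if'' direction, suppose $X \subseteq \A_k^m$ is toric. By Theorem \ref{th-tvi}, $\I(X)$ is a toric $\p$-ideal, so by Lemma \ref{atpv-lemma1} it equals $J_L$ for the toric $\Z[x]$-lattice $L = \Syz(U)$, where $U = \{\bu_1,\ldots,\bu_m\}\subset \Z[x]^n$ is the defining set with $X = X_U$. The natural candidate for $S$ is the affine $P[x]$-semimodule generated by $U$, namely $S = P[x](U) \subseteq \Z[x]^n$. I would then identify the $\p$-coordinate ring $k\{X\} = k\{y_1,\ldots,y_m\}^{\p}/J_U$ with $k[S]$ via the map $\bar{y_i} \mapsto \chi^{\bu_i}$. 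Concretely, consider $\pi\colon k\{y_1,\ldots,y_m\}^{\p} \to k[S]$ sending $\Y^{\bv} \mapsto \chi^{\overline{U}\bv}$ (which is well defined since $\overline{U}\bv \in S$ for $\bv\in P[x]^m$); this is surjective since $S = P[x](U)$, it is a $k$-$\sigma$-algebra homomorphism because $\sigma(\chi^{\bu}) = \chi^{x\bu}$ matches $\sigma(\Y^{\bv}) = \Y^{x\bv}$, and it sends $\p$-terms to $\p$-terms. The key point is that $\ker(\pi) = J_U = J_L$: a $\p$-monomial difference $\Y^{\bv_1} - \Y^{\bv_2}$ lies in $\ker(\pi)$ iff $\overline{U}(\bv_1 - \bv_2) = \mathbf{0}$ iff $\bv_1 - \bv_2 \in \Syz(U) = L$, and the graded argument from the proof of Lemma \ref{atpv-lemma1} shows $\ker(\pi)$ is spanned by such differences. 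Hence $k\{X\} \simeq k[S]$, and since $X \simeq \Spec^{\p}(k\{X\})$ (Theorem analogous to \ref{pdag-thm}), we get $X \simeq \Spec^{\p}(k[S])$.

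For the ``if'' direction, suppose $S = P[x](U)$ is an affine $P[x]$-semimodule with $U = \{\bu_1,\ldots,\bu_m\} \subset \Z[x]^n$, and $X \simeq \Spec^{\p}(k[S])$, i.e.\ $k\{X\} \simeq k[S]$. Since $k[S] = k\{\chi^{\bu_1},\ldots,\chi^{\bu_m}\}^{\p}$ is a $\sigma$-domain (as it embeds in $k\{t_1^{\pm1},\ldots,t_n^{\pm1}\}$), the zero ideal of $k[S]$ is $\p$-prime, so $X$ is irreducible. I would take the presentation $k\{y_1,\ldots,y_m\}^{\p} \twoheadrightarrow k[S]$, $y_i \mapsto \chi^{\bu_i}$, realizing $X$ as a $\p$-subvariety of $\A_k^m$ with $\I(X) = J_U$, the kernel of this map. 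By the computation above, $J_U = J_{\Syz(U)}$, and $\Syz(U)$ is toric by Remark \ref{pi-re1}; hence $\I(X)$ is a toric $\p$-ideal, and $X$ is toric by Theorem \ref{th-tvi}. Alternatively, one can observe directly that $X = X_U$, the closure of the quasi $\sigma$-torus $T_U^*$, since the map $\theta\colon (\A^*)^n \to \A^m$, $\T \mapsto (\T^{\bu_1},\ldots,\T^{\bu_m})$ has image whose coordinate ring is exactly $k[S]$.

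The main obstacle I anticipate is not any single hard estimate but rather the bookkeeping needed to verify carefully that $\ker(\pi) = J_U$ and that this kernel coincides with the binomial ideal $J_L$ — in particular, confirming that $\ker(\pi)$ is generated by differences of $\p$-monomials (the homogeneity argument) and that passing to the quotient is compatible with the $\p$-structure, so that the isomorphism $k\{X\} \simeq k[S]$ is genuinely an isomorphism of affine $k$-$\p$-algebras (respecting $\p$-terms), not merely of $k$-$\sigma$-algebras. Once that identification is in place, both directions follow formally from Theorem \ref{th-tvi} together with the antiequivalence between affine $k$-$\p$-varieties and affine $k$-$\p$-algebras.
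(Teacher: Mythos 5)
Your proposal is correct and takes essentially the same approach as the paper: both directions hinge on the surjective $k$-$\sigma$-algebra homomorphism $k\{y_1,\ldots,y_m\}^{\p}\twoheadrightarrow k[S]$ sending $y_i\mapsto\chi^{\bu_i}$, whose kernel is identified with $J_U$, yielding $k\{X_U\}\simeq k[S]$ and hence $X_U\simeq\Spec^{\p}(k[S])$; the converse is obtained by reversing this identification. The extra detour through Theorem \ref{th-tvi} and the syzygy lattice $L=\Syz(U)$ is harmless but not needed — the paper's proof goes directly from the definition of $J_U$ in (\ref{eq-atpv0}) to $\ker(\theta)=J_U$ without invoking the toric-ideal characterization.
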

\begin{proof}
Suppose $X=X_U$ is the affine toric $\p$-variety defined by $U=\{\bu_1,\ldots,\bu_m\}\subset\Z[x]^n$ and $J_U$ is defined in (\ref{eq-atpv0}). Let $S=P[x](U)=P[x](\{\bu_1,\ldots,\bu_m\})$ be the affine $P[x]$-semimodule generated by $U$. Define the following $k$-$\sigma$-algebra homomorphism
$$\theta\colon k\{y_1,\ldots,y_m\}^{\p}\longrightarrow k[S], y_i\mapsto\chi^{\bu_i}, i=1,\ldots,m.$$
The map $\theta$ is surjective by the definition of $k[S]$. If $f\in\ker(\theta)$, then $f(\chi^{\bu_1},\ldots,\chi^{\bu_i})=0$, which is equivalent to $f\in J_U$. It follows that $\ker(\theta)=J_U$ and $k\{y_1,\ldots,y_m\}^{\p}/{J_U}\simeq k[S]$. Therefore,
$X\simeq\Spec^{\p}(k\{y_1,\ldots,y_m\}^{\p}/J_U)=\Spec^{\p}(k[S])$.

Conversely, if $X\simeq\Spec^{\p}(k[S])$, where $S\subseteq\Z[x]^n$ is an affine $P[x]$-semimodule, and $S=P[x](\{\bu_{1}, \ldots,\bu_{m}\})$ for $\bu_{i}\in S$. Let $X_U$ be the affine toric $\p$-variety defined by $U=\{\bu_{1},\ldots,\bu_{m}\}$. Then as we just proved, the $\p$-coordinate ring of $X$ is isomorphic to $k[S]$. Then $X\simeq X_{U}$.
\end{proof}

Suppose $S$ is an affine $P[x]$-semimodule. For each $K\in \mathscr{E}_k$, a map $\phi\colon S\rightarrow K$ is a {\em morphism} from $S$ to $K$ if $\phi$ satisfies $\phi(\sum_ig_i\bu_i)=\prod_i\phi(\bu_i)^{g_i}$, for $\bu_i\in S$ and $g_i\in P[x]$.
\begin{cor}\label{aspv-prop}
Let $X=\Spec^{P[\sigma]}(k[S])$ be an affine toric $P[\sigma]$-variety. Then there is a one-to-one correspondence between $X(K)$ and $\Hom(S,K)$, for each $K\in \mathscr{E}_k$. Equivalently, $X\simeq \Hom(S,\underline{\mspace{18mu}})$ as functors.
\end{cor}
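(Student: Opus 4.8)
The plan is to combine two facts already available in the excerpt: the identification $X\simeq\Spec^{P[\sigma]}(k[S])$ for an affine toric $P[\sigma]$-variety (Theorem \ref{aspv-thm3}), and the functorial description $X\simeq\Hom(k\{X\},\underline{\mspace{18mu}})$ of any affine $P[\sigma]$-variety (Lemma \ref{apv-lemma1}). Since the $P[\sigma]$-coordinate ring of $X$ is $k[S]$, Lemma \ref{apv-lemma1} already gives a natural bijection between $X(K)$ and the set of $k$-$\sigma$-algebra homomorphisms from $k[S]$ to $K$. So the entire content of the corollary reduces to exhibiting, for each $K\in\mathscr{E}_k$, a natural bijection
\[\Hom_{k\text{-}\sigma}(k[S],K)\;\xrightarrow{\ \sim\ }\;\Hom(S,K),\]
where the right-hand side is the set of $P[x]$-semimodule morphisms $S\to K$ in the sense defined just before the corollary, i.e. maps $\phi$ with $\phi(\sum_i g_i\bu_i)=\prod_i\phi(\bu_i)^{g_i}$.

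First I would construct the two maps. Given a $k$-$\sigma$-algebra homomorphism $\psi\colon k[S]\to K$, restrict it to the basis elements $\chi^{\bu}$, $\bu\in S$, and set $\phi(\bu):=\psi(\chi^{\bu})$. The multiplicativity $\chi^{\bu_1}\chi^{\bu_2}=\chi^{\bu_1+\bu_2}$ together with compatibility with $\sigma$, namely $\sigma(\chi^{\bu})=\chi^{x\bu}$, forces $\phi(\bu+\bv)=\phi(\bu)\phi(\bv)$ and $\phi(x\bu)=\sigma(\phi(\bu))$; iterating and combining with $\Z$-linearity on exponents gives $\phi(g\bu)=\phi(\bu)^{g}$ for every $g\in P[x]$, hence $\phi$ is a $P[x]$-semimodule morphism $S\to K$. (One must note here the convention $a^g=0$ iff $a=0$ for $g\in P[x]^*$, so the formula $\phi(g\bu)=\phi(\bu)^g$ is also consistent when $\phi(\bu)=0$.) Conversely, given a $P[x]$-semimodule morphism $\phi\colon S\to K$, define $\psi\colon k[S]\to K$ on the basis by $\psi(\chi^{\bu})=\phi(\bu)$ and extend $k$-linearly; the relation $\chi^{\bu_1}\chi^{\bu_2}=\chi^{\bu_1+\bu_2}$ and $\phi(\bu_1+\bu_2)=\phi(\bu_1)\phi(\bu_2)$ show $\psi$ is a ring homomorphism, and $\phi(x\bu)=\sigma(\phi(\bu))$ (the $g=x$ case of the semimodule morphism axiom) shows $\psi$ commutes with $\sigma$. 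These two assignments are visibly inverse to each other, since both are determined by their values on the basis $\{\chi^{\bu}\}_{\bu\in S}$.

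Next I would verify naturality in $K$: for a morphism $K\to L$ in $\mathscr{E}_k$, postcomposition is compatible with the bijection on both sides, which is immediate because the bijection is given by restriction to / extension from the basis and postcomposition commutes with both operations. Chaining this with the bijection $X(K)\simeq\Hom_{k\text{-}\sigma}(k[S],K)$ from Lemma \ref{apv-lemma1} (which is itself natural in $K$) yields the stated natural bijection $X(K)\simeq\Hom(S,K)$, i.e. $X\simeq\Hom(S,\underline{\mspace{18mu}})$ as functors.

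I do not expect a serious obstacle here; the only point requiring care is the passage from the two generating relations $\phi(\bu+\bv)=\phi(\bu)\phi(\bv)$ and $\phi(x\bu)=\sigma(\phi(\bu))$ to the full identity $\phi(\sum_i g_i\bu_i)=\prod_i\phi(\bu_i)^{g_i}$ for arbitrary $g_i\in P[x]$, and the symmetric check that a semimodule morphism in the stated sense does induce a $\sigma$-equivariant ring map — in particular keeping track of the zero-value convention for $a^g$ so that no division or ill-defined expression is introduced. Everything else is a direct unwinding of definitions.
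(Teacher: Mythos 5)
Your proposal is correct and follows the same route as the paper: apply Lemma \ref{apv-lemma1} to identify $X(K)$ with $k$-$\sigma$-algebra homomorphisms $k[S]\to K$, then pass between such homomorphisms and $P[x]$-semimodule morphisms $S\to K$ by restriction to, respectively extension from, the basis $\{\chi^{\bu}\}_{\bu\in S}$. You spell out the verification of the semimodule-morphism axioms and naturality in $K$ a bit more fully than the paper does, but the argument is the same.
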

\begin{proof}
By Lemma \ref{apv-lemma1}, for each $K\in \mathscr{E}_k$, an element of $X(K)$ is given by a $k$-$\sigma$-algebra homomorphism $f\colon k[S]\rightarrow K$. Then $f$ induces a morphism $\bar{f}\colon S\rightarrow K$ such that $\bar{f}(\bu)=f(\chi^{\bu})$ for $\bu\in S$. Conversely, given a morphism $\varphi\colon S\rightarrow K$, $\varphi$ extends to a $k$-$\sigma$-algebra homomorphism $\varphi^*\colon k[S]\rightarrow K$ which proves the one-to-one correspondence.
\end{proof}

In the rest of this paper, we always identity an element of $X(K)$ with a morphism from $S$ to $K$ for each $K\in\mathscr{E}_k$.

We have the following definition for $P[\sigma]$-tori.
\begin{definition}
Let $U=\{\bu_1,\ldots,\bu_m\}\subset\Z[x]^n$. The {\em $P[\sigma]$-torus} $\widetilde{T_U}$ defined by $U$ is the closure of the quasi $\sigma$-torus $T_U^*\subseteq\A^m$ in $(\A^*)^m$ under the topology in Definition \ref{apv-def}.
\end{definition}
\begin{remark}\label{atpv-re}
The above definition could be stated in a more precise way. Note that $(\A^*)^m$ is isomorphic to the affine $\p$-variety defined by $I_0=[y_1z_1-1,\ldots,y_mz_m-1]\subseteq k\{y,z\}^{\p}$ in $(\A)^{2m}$. Furthermore, the map
\begin{equation}\label{eq-TH}
\theta\colon(\A^*)^m\longrightarrow (\A)^{2m}
\end{equation}
defined by $\theta(a_1,\ldots,a_m)=(a_1,\ldots,a_m,a_1^{-1},\ldots,a_m^{-1})$ gives a one-to-one correspondence between $\p$-subvarieties of $(\A^*)^m$ and affine $\p$-varieties contained in $\V(I_0)$. Then the $\p$-torus $\widetilde{T_U}$ is the preimage of the affine $\p$-variety $X_{U\cup(-U)}$ in $\A^{2m}$ under the map $\theta$.
\end{remark}

Let $U=\{\bu_1,\ldots,\bu_m\}\subset\Z[x]^n$. Let $\widetilde{T_U}$ and $X_U$ be the $P[\sigma]$-torus and the affine toric $P[\sigma]$-variety defined by $U$ respectively. Then by definition, $\widetilde{T_U} = X_U\cap (\A^*)^m$ and $X_U=\overline{\widetilde{T_U}}$.

\begin{prop}\label{atpv-prop3}
Let $\widetilde{T}$ be an affine $P[\sigma]$-variety. Then $\widetilde{T}$ is a $P[\sigma]$-torus if and only if there exists a $\Z[x]$-lattice $M$ such that $\widetilde{T}\simeq \Spec^{P[\sigma]}(k[M])$.
\end{prop}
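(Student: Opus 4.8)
The plan is to prove both implications by reducing, via Remark~\ref{atpv-re} and Theorem~\ref{aspv-thm3}, to the elementary observation that $P[x]-P[x]=\Z[x]$: for $f\in\Z[x]$ one has $f=f-0$ if $f\in P[x]$ and $f=0-(-f)$ otherwise, so for any finite $U\subseteq\Z[x]^n$, $P[x]$-linear combinations of the symmetric set $U\cup(-U)$ realize exactly the $\Z[x]$-span of $U$; that is, the affine $P[x]$-semimodule $P[x](U\cup(-U))$ equals the $\Z[x]$-lattice $\Z[x](U)$.

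For the forward direction, suppose $\widetilde{T}$ is a $P[\sigma]$-torus, say $\widetilde{T}=\widetilde{T_U}$ for a finite $U=\{\bu_1,\ldots,\bu_m\}\subseteq\Z[x]^n$. By Remark~\ref{atpv-re}, the map $\theta$ of (\ref{eq-TH}) identifies $\widetilde{T_U}$, as a functor (and as a $\p$-variety), with the affine toric $\p$-variety $X_{U\cup(-U)}\subseteq\A^{2m}$. By Theorem~\ref{aspv-thm3} the $\p$-coordinate ring of $X_{U\cup(-U)}$ is $k[P[x](U\cup(-U))]$, and by the observation above $P[x](U\cup(-U))=\Z[x](U)=:M$. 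Since $M$ is a submodule of $\Z[x]^n$ it is a $\Z[x]$-lattice, and $\widetilde{T}\simeq\Spec^{\p}(k[M])$.

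For the converse, let $M\subseteq\Z[x]^n$ be a $\Z[x]$-lattice with $\widetilde{T}\simeq\Spec^{\p}(k[M])$. As a $\Z[x]$-lattice $M$ is finitely generated, say $M=\Z[x](\{\bw_1,\ldots,\bw_r\})$. Put $U=\{\bw_1,-\bw_1,\ldots,\bw_r,-\bw_r\}$, so that $U$ is symmetric ($U\cup(-U)=U$) and $P[x](U)=\Z[x](\bw_1,\ldots,\bw_r)=M$ by the observation above. Letting $\widetilde{T_U}$ be the $P[\sigma]$-torus defined by $U$, Remark~\ref{atpv-re} and Theorem~\ref{aspv-thm3} give $\widetilde{T_U}\simeq\Spec^{\p}(k[P[x](U\cup(-U))])=\Spec^{\p}(k[P[x](U)])=\Spec^{\p}(k[M])\simeq\widetilde{T}$, so $\widetilde{T}$ is a $P[\sigma]$-torus.

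I expect the proof itself to be short, so the only real care needed — the main obstacle, such as it is — lies in matching definitions precisely: confirming that the correspondence of Remark~\ref{atpv-re} is genuinely an isomorphism of $\p$-varieties (so that it transports the computation of the coordinate ring from $X_{U\cup(-U)}$ to $\widetilde{T_U}$), and handling the standing convention that generating sets of affine $P[x]$-semimodules be irredundant. The set $U\cup(-U)$ constructed above is typically redundant, but one may either waive the convention here or replace $U$ by a minimal subset of $U\cup(-U)$ whose $P[x]$-span is still $M$, which alters nothing in the argument; the arithmetic fact $P[x]-P[x]=\Z[x]$ is immediate from the definition of the order on $\Z[x]$.
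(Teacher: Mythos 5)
Your proof is correct and follows essentially the same route as the paper's: both directions reduce via Remark~\ref{atpv-re} and Theorem~\ref{aspv-thm3} to the identity $P[x](U\cup(-U))=\Z[x](U)$. The only thing you add is to make explicit the underlying fact $P[x]-P[x]=\Z[x]$ (and to flag, harmlessly, the irredundancy convention on generators), both of which the paper leaves tacit.
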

\begin{proof}
Suppose $\widetilde{T}$ is defined by $U$ and let $M=\Z[x](U)$. Since $\widetilde{T}\simeq X_{U\cup(-U)}$(Remark \ref{atpv-re}), we just need to show the $\p$-coordinate ring of $X_{U\cup(-U)}$ is $k[M]$. By definition, $X_{U\cup(-U)}$ is the affine toric $\p$-variety defined by $U\cup(-U)$. Thus by Theorem \ref{aspv-thm3}, the $\p$-coordinate ring of $X_{U\cup(-U)}$ is $k[P[x](U\cup(-U))]=k[M]$.

Conversely, suppose $M=\Z[x](U)$ and $U$ is a finite subset of $\Z[x]^n$. Then by the proof of the above necessity, $U$ defines a $\p$-torus $\widetilde{T_U}$ whose $\p$-coordinate ring is $k[M]$. Since $\widetilde{T}\simeq \widetilde{T_U}$, $\widetilde{T}$ is a $\p$-torus.
\end{proof}

Suppose $S$ is an affine $P[x]$-semimodule and $S^{md}:=\{\bu-\bv\mid\bu,\bv\in S\}$ is the $\Z[x]$-lattice generated by $S$. Let $X=\Spec^{\p}(k[S])$ and $\widetilde{T}=\Spec^{\p}(k[S^{md}])$. As a corollary of Proposition \ref{atpv-prop3}, we have
\begin{cor}\label{aspv-cor2}
For each $K\in \mathscr{E}_k$, there is a one-to-one correspondence between elements of $\widetilde{T}(K)$ and $\Hom(S^{md},K)$. Equivalently, $\widetilde{T}\simeq \Hom(S^{md},\underline{\mspace{18mu}})$ as functors.
\end{cor}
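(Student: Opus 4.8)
The plan is to reduce the statement directly to Corollary~\ref{aspv-prop}, which already supplies, for every affine $P[x]$-semimodule $S'$, a natural bijection between the $K$-points of $\Spec^{P[\sigma]}(k[S'])$ and the semimodule morphisms $\Hom(S',K)$. So the only real work is to place $S^{md}$ inside that framework and to check naturality in $K$.

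First I would observe that $S^{md}$, although it is a $\Z[x]$-lattice, is in particular an affine $P[x]$-semimodule: if $S=P[x](\{\bu_1,\ldots,\bu_m\})$, then $S^{md}=\Z[x](\{\bu_1,\ldots,\bu_m\})=P[x](\{\bu_1,-\bu_1,\ldots,\bu_m,-\bu_m\})$. Hence by Theorem~\ref{aspv-thm3} (equivalently, by Proposition~\ref{atpv-prop3} applied with $M=S^{md}$), the functor $\widetilde{T}=\Spec^{P[\sigma]}(k[S^{md}])$ is an affine toric $P[\sigma]$-variety, indeed a $P[\sigma]$-torus. I can then apply Corollary~\ref{aspv-prop} verbatim with $S^{md}$ in place of $S$: for each $K\in\mathscr{E}_k$ this gives a bijection $\widetilde{T}(K)\simeq\Hom(S^{md},K)$ sending a $k$-$\sigma$-algebra homomorphism $f\colon k[S^{md}]\to K$ to the morphism $\bu\mapsto f(\chi^{\bu})$, with inverse the extension of a semimodule morphism to $k[S^{md}]$. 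It is worth recording, though not strictly needed, that since $S^{md}$ is a $\Z[x]$-module every $\varphi\in\Hom(S^{md},K)$ satisfies $\varphi(\bu)\varphi(-\bu)=\varphi(\mathbf{0})=1$, so $\varphi$ actually takes values in $K^*$ and is a group homomorphism $S^{md}\to K^*$; this matches the intuition that the $K$-points of the $P[\sigma]$-torus $\widetilde{T}$ are the characters of the lattice $S^{md}$ evaluated in $K$.

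Finally I would upgrade the pointwise bijections to an isomorphism of functors. The assignment $K\rightsquigarrow\Hom(S^{md},K)$ is visibly a functor on $\mathscr{E}_k$: a $k$-$\sigma$-morphism $\psi\colon K\to L$ carries a morphism $\varphi\colon S^{md}\to K$ to $\psi\circ\varphi\colon S^{md}\to L$, because $\psi$ commutes with $\sigma$ and with taking powers by elements of $P[x]$. The bijections of the previous step commute with these transition maps, so they assemble into a natural isomorphism $\widetilde{T}\simeq\Hom(S^{md},\underline{\mspace{18mu}})$; this is the same naturality bookkeeping already used for Lemma~\ref{apv-lemma1} and Corollary~\ref{aspv-prop}.

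I do not expect a genuine obstacle here: the argument is entirely formal, and the only points requiring a sentence of justification are the identification of $S^{md}$ as an affine $P[x]$-semimodule and the compatibility of the Corollary~\ref{aspv-prop} bijection with change of the field $K$.
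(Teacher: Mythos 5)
Your argument is correct. You observe that $S^{md}=\Z[x](\{\bu_1,\ldots,\bu_m\})=P[x](\{\bu_1,-\bu_1,\ldots,\bu_m,-\bu_m\})$ is itself an affine $P[x]$-semimodule, so that $\widetilde{T}=\Spec^{P[\sigma]}(k[S^{md}])$ falls directly under the hypotheses of Corollary~\ref{aspv-prop}, which then hands you the bijection $\widetilde{T}(K)\simeq\Hom(S^{md},K)$ and its naturality in $K$ with no further work. This is a genuinely different (and shorter) route than the paper's. The paper instead exploits the open embedding $\widetilde{T}\hookrightarrow X=\Spec^{P[\sigma]}(k[S])$: it views an element of $\widetilde{T}(K)$ as a morphism $\gamma\colon S\to K$ in $X(K)$, argues that such a $\gamma$ is ``invertible'' so $\gamma(S)\subseteq K^*$, and then extends $\gamma$ uniquely along $S\hookrightarrow S^{md}$ to a morphism $S^{md}\to K^*$ before invoking Corollary~\ref{aspv-prop}. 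Your route avoids having to characterize which points of $X(K)$ land in $\widetilde{T}(K)$ and skips the extension step entirely; the paper's route, while slightly longer, makes the geometric picture explicit — that the $\sigma$-torus sits inside $X$ exactly as the locus where the semimodule morphism takes values in $K^*$ — which is information the paper wants on record for later use. Both proofs are valid, and your observation that everything reduces to applying Corollary~\ref{aspv-prop} to the affine $P[x]$-semimodule $S^{md}$ is the crisper formulation of the mechanism.
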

\begin{proof}
For each $K\in \mathscr{E}_k$, suppose $\gamma\colon S\rightarrow K$ is an element of $X(K)$ which lies in $\widetilde{T}(K)$. Since elements of $\widetilde{T}(K)$ are invertible, $\gamma(S)\subseteq K^*$ and hence $\gamma$ can be extended to $S^{md}$, $\gamma\colon S^{md}\rightarrow K^*$. So the one-to-one correspondence follows from Proposition \ref{aspv-prop}.
\end{proof}

Let $T=\Spec^{\sigma}(k[M])$ and $\widetilde{T}=\Spec^{P[\sigma]}(k[M])$ be the $\D$-torus and the $\p$-torus associated with a $\Z[x]$-lattice $M$ respectively. Since by \cite[Proposition 5.7]{dd-tdv}, $T\simeq \Hom(M,\underline{\mspace{18mu}})$ and by Corollary \ref{aspv-cor2}, $\widetilde{T}\simeq \Hom(M,\underline{\mspace{18mu}})$, we know that $T=\widetilde{T}$. So a $\p$-torus is actually a $\sigma$-torus and we will write $T$ for $\widetilde{T}$.

\begin{prop}
Let $U=\{\bu_1,\ldots,\bu_m\}\subset\Z[x]^n$ and $X_U=\Spec^{P[\sigma]}(k[P[x](U)])$ the affine toric $\p$-variety defined by $U$. Then $X_U$ is an irreducible $\p$-variety containing the $\D$-torus $T_U$ as an open subset and is of $\D$-dimension $\rank(U)$.
\end{prop}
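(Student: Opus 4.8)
The plan is to verify the three asserted properties of $X_U$ in turn, relying on the identification $k\{X_U\} \simeq k[P[x](U)]$ from Theorem \ref{aspv-thm3} and on the standard facts about $\sigma$-tori recalled just above the statement.

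\emph{Irreducibility.} First I would argue that $X_U$ is irreducible by showing that its $P[\sigma]$-coordinate ring $k[S]$, with $S = P[x](U)$, is a $\sigma$-domain. This was already observed in the discussion preceding Theorem \ref{aspv-thm3}: the embedding $S \hookrightarrow \Z[x]^n$ induces an embedding $k[S] \hookrightarrow k[\Z[x]^n] \simeq k\{t_1^{\pm 1}, \ldots, t_n^{\pm 1}\}$, and the latter is a $\sigma$-domain (indeed a localization of a polynomial ring). A subring of a domain is a domain, so $k[S]$ has no zero divisors; equivalently the zero ideal of $k[S]$ is prime, hence (being automatically reflexive and $P[\sigma]$, as the whole ring is a $\sigma$-domain with the relevant monomial structure) the ideal $\I(X_U) = J_U$ is $P[\sigma]$-prime, and $X_U = \V(J_U)$ is irreducible.

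\emph{The torus as an open subset.} Next I would exhibit $T_U = \Spec^{\sigma}(k[M])$, where $M = M^{md} = \Z[x](U) = S^{md}$, as an open subfunctor of $X_U$. By Corollary \ref{aspv-prop}, for each $K \in \mathscr{E}_k$ the points $X_U(K)$ are identified with $\Hom(S, K)$, and by Corollary \ref{aspv-cor2} the points $T_U(K)$ are identified with $\Hom(S^{md}, K)$, which sits inside $\Hom(S,K)$ precisely as those morphisms $\gamma \colon S \to K$ with $\gamma(S) \subseteq K^*$. Writing $U = \{\bu_1, \ldots, \bu_m\}$, this is the condition $\gamma(\bu_i) \neq 0$ for all $i$, i.e.\ the image of $X_U(K)$ under the coordinate embedding into $\A^m(K)$ meeting $(\A^*)^m(K)$. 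Hence $T_U = X_U \cap (\A^*)^m$, which is exactly the basic open subset $D^{P[\sigma]}(\bar y_1 \cdots \bar y_m)$ of $X_U$ in the Cohn topology of Definition \ref{apv-def}; in particular it is open. That $T_U$ is a $\sigma$-torus, and that the group action of $T_U$ on itself extends, follows from Proposition \ref{atpv-prop3} together with the remark identifying $\p$-tori with $\sigma$-tori.

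\emph{The dimension.} Finally I would compute $\D$-$\dim X_U$. The $\D$-dimension equals the $\D$-transcendence degree of $\Frac(k[S])$ over $k$, and since $T_U$ is a dense open subset of $X_U$ (density because $X_U = \overline{T_U}$ by definition, via $X_U = \overline{\widetilde{T_U}}$), this equals the $\D$-dimension of $T_U = \Spec^{\sigma}(k[M])$ with $M = \Z[x](U)$. For a $\sigma$-torus attached to a $\Z[x]$-lattice $M$, the $\D$-dimension is $\rank(M)$; this is standard (it is the statement that $\Frac(k[M]) = k\langle \chi^{\bu_1}, \ldots, \chi^{\bu_r}\rangle$ has $\D$-transcendence degree equal to the $\Q(x)$-rank of $M$, which one proves by choosing a $\Q(x)$-basis among the $\bu_i$ and checking the remaining generators are $\D$-algebraic over it). Since $\rank(M) = \rank(\Z[x](U)) = \rank(U)$ by definition of the rank of a finite subset of $\Z[x]^n$, we conclude $\D$-$\dim X_U = \rank(U)$.

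\emph{Main obstacle.} The routine parts are irreducibility (immediate from the subring-of-a-domain argument) and openness of $T_U$ (a direct unwinding of the functor-of-points descriptions). The part needing the most care is the dimension computation: one must justify that passing to the dense open subset $T_U$ does not change the $\D$-dimension and that the $\D$-dimension of the $\sigma$-torus $\Spec^{\sigma}(k[M])$ is exactly $\rank(M)$ rather than merely bounded by it. I expect to handle the latter by the explicit basis argument sketched above, or by citing the corresponding result for affine toric $\sigma$-varieties from \cite{dd-tdv}, since the $P[\sigma]$-coordinate ring $k[S]$ and the $\sigma$-coordinate ring of the toric $\sigma$-variety $X_U$ share the same fraction field $\Frac(k[M])$.
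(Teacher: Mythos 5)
Your proposal is correct and follows essentially the same route as the paper. The only stylistic difference is in the openness step: the paper identifies $T_U$ as a principal open subset via the algebraic localization $k[M] = k[S]_{\chi^{\bu_1+\cdots+\bu_m}}$ (using that $S^{md}$ is generated as a $P[x]$-semimodule by $S$ together with $-(\bu_1+\cdots+\bu_m)$), whereas you unwind the functor-of-points descriptions to realize $T_U = X_U \cap (\A^*)^m = D^{P[\sigma]}(\bar y_1\cdots\bar y_m)$; since $\bar y_1\cdots\bar y_m = \chi^{\bu_1+\cdots+\bu_m}$ in $k[S]$, the two descriptions coincide, and your version is a legitimate rephrasing of the same fact.
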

\begin{proof}
Let $S=P[x](U)$. Since $X_U$ has the $\p$-coordinate ring $k[S]$ by Theorem \ref{aspv-thm3} and $k[S]$ is a $\D$-domain, $X_U$ is irreducible. The inclusion $i\colon S\hookrightarrow M=S^{md}$ induces a $k$-$\sigma$-algebra homomorphism $i^*\colon k[S]\hookrightarrow k[M]$ which corresponds to a morphism $j\colon T_U\rightarrow X_U$. $S^{md}$ is generated as a $P[x]$-semimodule by $S$ and $-(\bu_1+\ldots+\bu_m)$. This implies that $k[M]$ is the localization of $k[S]$ at the element $\chi^{\bu_1+\ldots+\bu_m}$. Therefore, $j$ embeds $T_U$ into $X_U$ as a principal affine open subset. Since the $\D$-dimension of $T_U$ is $\rank(U)$, the $\D$-dimension of $X_U$ is equal to $\rank(U)$.
\end{proof}

\subsection{$\D$-Algebraic Group Action on $X$}
As in the $\sigma$-case, there is a $\D$-algebraic group action on an affine toric $\p$-variety.
Let $X$ and $T$ be the affine toric $\p$-variety and the $\D$-torus associated with an affine $\p$-semimodule $S$ respectively. We describe how $T$ acts on $X$ as a $\D$-algebraic group.
Define a map $X\times X\rightarrow X\colon ((x_1,\ldots,x_m),(y_1,\ldots,y_m))\mapsto(x_1y_1,\ldots,x_my_m)$, for $(x_1,\ldots,,x_m),(y_1,\ldots,y_m)\in X$. It can be described using $P[x]$-semimodule morphisms as follows: for any $K\in \mathscr{E}_k$, let $\varphi,\psi \colon S\rightarrow K$ be two elements of $X(K)$, then $(\varphi,\psi)\mapsto\varphi\psi\colon S\rightarrow K,\varphi\psi(\bu)=\varphi(\bu)\psi(\bu)$, for $\bu\in S$. This corresponds to the $k$-$\sigma$-algebra homomorphism $\Phi\colon k[S]\rightarrow k[S]\otimes k[S]$ such that $\Phi(\chi^{\bu})=\chi^{\bu}\otimes\chi^{\bu}$, for $\bu\in S$.

Via the embedding $T\subseteq X$, the operation on $X$ induces a map $T\times X\rightarrow X$ which is clearly a $\sigma$-algebraic group action on $X$ and extends the group action of $T$ on itself. It corresponds to the $k$-$\sigma$-algebra homomorphism $k[S]\rightarrow k[S^{md}]\otimes k[S]$, $\chi^{\bu}\mapsto \chi^{\bu}\otimes \chi^{\bu}$, for $\bu\in S$.

The following theorem shows that if an affine $\p$-variety contains a $\D$-torus as an open subset extending the group action of the $\D$-torus on itself, then it is toric. In other words, the theorem gives a description of affine toric $P[\sigma]$-varieties in terms of $\D$-algebraic group actions.
\begin{theorem}\label{atpv-thm4}
Let $X$ be an affine $k$-$P[\sigma]$-variety, $T\subseteq X$ an open subset which is a $\D$-torus such that the group action of $T$ on itself extends to a $\D$-algebraic group action on $X$. Then there is an affine $P[x]$-semimodule $S$ and an isomorphism $X\simeq \Spec^{P[\sigma]}(k[S])$. In other words, $X$ is an affine toric $P[\sigma]$-variety.
\end{theorem}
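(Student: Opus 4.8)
The plan is to read off an affine $P[x]$-semimodule directly from the coordinate ring $k\{X\}$ together with the coaction of $T$, and then to quote Theorem~\ref{aspv-thm3}. By Proposition~\ref{atpv-prop3} we may write $T\simeq\Spec^{\p}(k[M])$ for a $\Z[x]$-lattice $M\subseteq\Z[x]^n$, so that $k\{T\}=k[M]=\bigoplus_{\bw\in M}k\chi^{\bw}$, in which every monomial $\chi^{\bw}$ is a unit. Write $X\subseteq\A^m_k$ with coordinate functions $\bar y_1,\dots,\bar y_m\in k\{X\}$; these are $\p$-monomials and generate $k\{X\}$ as a $k$-$\p$-algebra. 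The given $\D$-algebraic group action $a\colon T\times X\to X$ is a morphism of affine $k$-$\p$-varieties, hence dualizes to a morphism of $k$-$\p$-algebras $a^{*}\colon k\{X\}\to k\{T\times X\}\simeq k[M]\otimes_k k\{X\}$ satisfying the coassociativity and counit identities of a coaction.

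First I would show that each coordinate function is a $T$-semiinvariant whose weight lies in the lattice $M$. Because $a$ is a morphism of $\p$-varieties, each $a^{*}(\bar y_i)$ is a single $\p$-term of $k\{T\times X\}$, and the $\p$-monomials of $k[M]\otimes_k k\{X\}$ are exactly the elements $\chi^{\bv}\otimes\overline{\Y^{\bu}}$ with $\bv\in M$ and $\bu\in P[x]^m$. Thus $a^{*}(\bar y_i)=c_i\,\chi^{\bu_i}\otimes\overline{\Y^{\bu'_i}}$ for some $c_i\in k$, $\bu_i\in M$, $\bu'_i\in P[x]^m$. Applying the counit $(\mathrm{id}_{k[M]}\otimes e^{*})$, where $e\colon\mathrm{pt}\to X$ is the identity of $T$, forces $c_i\,\overline{\Y^{\bu'_i}}=\bar y_i$, whence $a^{*}(\bar y_i)=\chi^{\bu_i}\otimes\bar y_i$. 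Composing with the inclusion $j=a\circ(\mathrm{id}_T\times e)\colon T\hookrightarrow X$ gives $j^{*}(\bar y_i)=\bar y_i(e)\,\chi^{\bu_i}=:c_i\chi^{\bu_i}$ in $k[M]$; coordinates with $c_i=0$ vanish on the dense set $T$, hence on $X$, and may be dropped, so we may assume each $c_i\in k^{*}$.

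Next I would use that $T$ is open and dense in $X$ — here one uses that $X$ is irreducible — so that the restriction $j^{*}\colon k\{X\}\to k[M]$ is injective. Since $\chi^{\bu_i}$ is a unit in $k[M]$ and $j^{*}(\bar y_i^{\,h})=c_i^{\,h}\chi^{h\bu_i}$ for $h\in\N[x]$ (where the right-hand side is an honest product of powers), the additive relations $\bar y_i^{\,g}\bar y_i^{\,g''}=\bar y_i^{\,g'''}$, with $g'',g'''\in\N[x]$ chosen so that $g+g''=g'''$, yield $j^{*}(\bar y_i^{\,g})=c'\,\chi^{g\bu_i}$ with $c'\in k^{*}$ for every $g\in P[x]^{*}$, and hence $j^{*}(\overline{\Y^{\bu}})\in k^{*}\chi^{\sum_i u_i\bu_i}$ for every $\bu\in P[x]^m$. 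Therefore the image of $j^{*}$ is $\bigoplus_{\bu\in S}k\chi^{\bu}=k[S]$, where $S:=P[x](\{\bu_1,\dots,\bu_m\})\subseteq M$ is an affine $P[x]$-semimodule. So $j^{*}$ identifies $k\{X\}$ with $k[S]$, i.e.\ $X\simeq\Spec^{\p}(k[S])$, which is an affine toric $\p$-variety by Theorem~\ref{aspv-thm3}.

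I expect the crux to be the second step: verifying that $a^{*}$ carries each coordinate function to a pure tensor $\chi^{\bu_i}\otimes\bar y_i$ with $\bu_i$ a bona fide element of the lattice $M$, and then keeping the $P[x]$-power operation $a\mapsto a^{g}$ under control — the cleanest device being to push everything forward to $k[M]$, where all monomials are invertible, before manipulating powers indexed by elements of $P[x]\setminus\N[x]$. Establishing that $T$ is dense (equivalently, that $X$ is irreducible, which should be recorded among the standing hypotheses on toric $\p$-varieties, as in the introduction) is the other point that has to be stated with care.
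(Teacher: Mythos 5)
Your proof and the paper's proof exploit the same two ingredients --- the identification $T \simeq \Spec^{\p}(k[M])$ and the coaction $a^*\colon k\{X\}\to k[M]\otimes_k k\{X\}$ --- but the way each turns the coaction into a grading on $k\{X\}$ is genuinely different.

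The paper works at the level of arbitrary elements of $k\{X\}\subseteq k[M]$: from the commutative square relating $a^*$ to the comultiplication $\Phi\colon k[M]\to k[M]\otimes_k k[M]$, $\chi^{\bu}\mapsto\chi^{\bu}\otimes\chi^{\bu}$, it deduces that $\Phi$ carries $k\{X\}$ into $k[M]\otimes_k k\{X\}$, and then uses linear independence of $\{\chi^{\bu}\}_{\bu\in M}$ in the first tensor factor to split any $\sum_{\bu}\alpha_{\bu}\chi^{\bu}\in k\{X\}$ into weight components $\alpha_{\bu}\chi^{\bu}\in k\{X\}$. This yields $k\{X\}=\bigoplus_{\bu\in S}k\chi^{\bu}$ for some $S\subseteq M$, and the fact that $k\{X\}$ is an affine $k$-$\p$-subalgebra of $k[M]$ then forces $S$ to be an affine $P[x]$-semimodule. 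You instead work at the level of the finitely many $\p$-monomial generators $\bar y_1,\dots,\bar y_m$ and use the structural constraint that a morphism of $\p$-varieties sends $\p$-terms to $\p$-terms to force $a^*(\bar y_i)$ to be a single pure tensor, then pin it down as $\chi^{\bu_i}\otimes\bar y_i$ via the counit. The payoff of your route is that $S$ is exhibited explicitly as $P[x](\{\bu_1,\dots,\bu_m\})$, rather than produced abstractly.

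The trade-off is that your argument requires, at the outset, that $a\colon T\times X\to X$ be a morphism of $\p$-varieties in the strict sense (given by $\p$-terms). The theorem's hypothesis only posits a ``$\D$-algebraic group action on $X$,'' which on its face is a morphism given by $\sigma$-polynomials; the paper's weight-decomposition argument goes through under this weaker assumption, and the preservation of $\p$-terms emerges only a posteriori, once one knows $a^*(\chi^{\bu})=\chi^{\bu}\otimes\chi^{\bu}$. So either you should flag that you are taking ``$\D$-algebraic group action'' in the $\p$-category, or precede your analysis of $a^*(\bar y_i)$ with the paper's weight decomposition to establish that $a^*$ is indeed $\p$-term preserving before invoking that property. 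Your observation that the injectivity of $j^*\colon k\{X\}\hookrightarrow k[M]$ needs $T$ to be dense in $X$, i.e.\ that $X$ should be required to be irreducible, is correct and is indeed a hypothesis the paper uses implicitly without stating.
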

\begin{proof}
By Proposition \ref{atpv-prop3}, there is a $\mathbb{Z}[x]$-lattice $M$ such that $T\simeq \Spec^{P[\sigma]}(k[M])$. The open embedding $T\subseteq X$ induces $k\{X\}\subseteq k[M]$. Since the action of $T$ on itself extends to a $\D$-algebraic group action on $X$, we have the following commutative diagram:
\begin{equation}\label{atpv-diag2}
\begin{gathered}
\xymatrix{T\times T \ar[r]^(0.6){\phi}\ar[d]&T\ar[d]\\T\times X\ar[r]^(0.6){\tilde{\phi}}&X}
\end{gathered}
\end{equation}
where $\phi$ is the group action of $T$, $\tilde{\phi}$ is the extension of $\phi$ to $T\times X$.

From (\ref{atpv-diag2}), we obtain the following commutative diagram of the corresponding $\p$-coordinate rings:
$$\begin{gathered}
\xymatrix{k\{X\}\ar[r]^(0.35){\tilde{\Phi}}\ar[d]&k[M]\otimes_k k\{X\}\ar[d]\\k[M]\ar[r]^(0.35){\Phi}&k[M]\otimes_k k[M]}
\end{gathered}$$
where the vertical maps are inclusions, and $\Phi(\chi^{\bu})=\chi^{\bu}\otimes \chi^{\bu}$ for $\bu\in M$. It follows that if $\sum_{\bu\in M}\alpha_{\bu}\chi^{\bu}$ with finitely many $\alpha_{\bu}\neq 0$ is in $k\{X\}$, then $\sum_{\bu\in M}\alpha_{\bu}\chi^{\bu}\otimes \chi^{\bu}$ is in $k[M]\otimes_k k\{X\}$, so $\alpha_{\bu}\chi^{\bu}\in k\{X\}$ for every $\bu\in M$. This shows that there is a subset $S$ of $M$ such that $k\{X\}=\bigoplus_{\bu\in S}k\chi^{\bu}$. Since $k\{X\}$ is an affine $k$-$P[\sigma]$-subalgebra of $k[M]$, it follows that $S$ is an affine $P[x]$-semimodule. So by Theorem \ref{aspv-thm3}, $X$ is an affine toric $\p$-variety.
\end{proof}

\subsection{Toric Morphisms between Affine Toric $P[\sigma]$-Varieties}
Note that if $\phi\colon S_1\rightarrow S_2$ is a morphism between affine $P[x]$-semimodules, we have an induced $k$-$\sigma$-algebra homomorphism $\bar{\phi}\colon k[S_1]\rightarrow k[S_2]$ such that $\bar{\phi}(\chi^{\bu})=\chi^{\phi(\bu)}$, for all $\bu\in S$, which gives a morphism between affine toric $\p$-varieties $\phi^*\colon\Spec^{\p}(k[S_2])\rightarrow\Spec^{\p}(k[S_1])$. In this subsection, we will show that actually all toric morphisms between affine toric $\p$-varieties arise in this way. First we give the definition of toric morphisms.
\begin{definition}
Let $X_i=\Spec^{\p}(k[S_i])$ be the affine toric $\p$-varieties coming from affine $P[x]$-semimodules $S_i, i=1,2$ with $\D$-tori $T_i$ respectively. A morphism $\phi\colon X_1\rightarrow X_2$ is said to be {\em toric} if $\phi(T_1)\subseteq T_2$ and $\phi|_{T_1}$ is a $\sigma$-algebraic group homomorphism.
\end{definition}

\begin{prop}
Let $\phi\colon X_1\rightarrow X_2$ be a toric morphism of affine toric $\p$-varieties. Then $\phi$ preserves group actions, namely,
\[\phi(t\cdot p)=\phi(t)\cdot \phi(p)\]
for all $t\in T_1$ and $p\in X_1$.
\end{prop}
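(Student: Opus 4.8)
The plan is to realize both sides of the asserted identity as morphisms of affine $\p$-varieties out of the product $T_1\times X_1$ and to check that these two morphisms agree on the Cohn-dense open subset $T_1\times T_1$.

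First I would set $Y:=T_1\times X_1$. Writing $T_1=\Spec^{\p}(k[S_1^{md}])$ and $X_1=\Spec^{\p}(k[S_1])$ (Proposition \ref{atpv-prop3}, Theorem \ref{aspv-thm3}), the product $Y$ has $\p$-coordinate ring $k[S_1^{md}]\otimes_k k[S_1]$; this is again the $P[x]$-semimodule algebra of an affine $P[x]$-semimodule, hence a $\sigma$-domain, so $Y$ is an irreducible affine toric $\p$-variety. Define $F\colon Y\to X_2$ by $F(t,p)=\phi(t\cdot p)$: it is the composite of the $\D$-algebraic group action $T_1\times X_1\to X_1$ constructed earlier in this section with the morphism $\phi\colon X_1\to X_2$, so it is a morphism of $\p$-varieties. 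Define $G\colon Y\to X_2$ by $G(t,p)=\phi(t)\cdot\phi(p)$: since $\phi(T_1)\subseteq T_2$ we have a morphism $\phi\times\phi\colon T_1\times X_1\to T_2\times X_2$, and $G$ is its composite with the group action $T_2\times X_2\to X_2$, so $G$ is also a morphism of $\p$-varieties. The proposition is exactly the assertion $F=G$.

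Next I would check that $F$ and $G$ agree on the open subset $T_1\times T_1\subseteq Y$. By construction the $\D$-algebraic group action of $T_i$ on $X_i$ extends the group operation of $T_i$, so on $T_1\times T_1$ one has $F(t,p)=\phi(tp)$; and for $p\in T_1$ we have $\phi(p)\in T_2$, so on $T_2\times T_2$ one has $G(t,p)=\phi(t)\phi(p)$. Since $\phi|_{T_1}$ is a $\sigma$-algebraic group homomorphism (this is the definition of a toric morphism), $\phi(tp)=\phi(t)\phi(p)$, hence $F$ and $G$ coincide on $T_1\times T_1$.

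Finally I would promote this to $F=G$ on all of $Y$. Because $T_1$ is a principal affine open subset of the irreducible $\p$-variety $X_1$, the subset $T_1\times T_1$ is a nonempty principal open subset of $Y$, hence Cohn-dense; concretely, the inclusion $\iota\colon T_1\times T_1\hookrightarrow Y$ is dual to a localization map $\iota^*\colon k\{Y\}\hookrightarrow k\{T_1\times T_1\}$ of a domain, which is injective. From $F\circ\iota=G\circ\iota$ we get $\iota^*\circ F^*=\iota^*\circ G^*$, hence $F^*=G^*$, hence $F=G$ by the antiequivalence between affine $\p$-varieties and affine $k$-$\p$-algebras; evaluating at each $K\in\mathscr{E}_k$ yields $\phi(t\cdot p)=\phi(t)\cdot\phi(p)$ for all $t\in T_1(K)$ and $p\in X_1(K)$, as desired. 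The one step that needs care is this last one: because the Cohn topology is coarse and the genuine objects here are functors of points, the density argument must be run through the coordinate rings (using that $k\{Y\}$ is a $\sigma$-domain and that restriction to the principal open $T_1\times T_1$ is a localization, hence injective), rather than invoking a purely topological "a map is determined on a dense subset" statement. Everything else is routine bookkeeping with the $P[x]$-semimodule descriptions from Corollaries \ref{aspv-prop} and \ref{aspv-cor2}.
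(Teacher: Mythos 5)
Your proposal is correct and follows essentially the same route as the paper: realize both sides as morphisms out of $T_1\times X_1$ (equivalently, check commutativity of the square formed by the two actions and $\phi|_{T_1}\times\phi$, $\phi$), verify the identity on $T_1\times T_1$ using that $\phi|_{T_1}$ is a $\sigma$-algebraic group homomorphism, and conclude by density of $T_1\times T_1$ in $T_1\times X_1$. Your only addition is to spell out the density step at the level of $\p$-coordinate rings (localization of a $\sigma$-domain is injective), where the paper simply asserts density and commutativity; this is a harmless and arguably welcome elaboration, not a different argument.
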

\begin{proof}
Suppose the action of $T_i$ on $X_i$ is given by the morphism $\varphi_i\colon T_i\times X_i\rightarrow X_i, i=1,2$. To show $\phi$ preserves group actions is equivalent to showing that the following diagram is commutative:
\begin{equation}
\begin{gathered}
\xymatrix{T_1\times X_1\ar[r]^(0.6){\varphi_1}\ar[d]_{\phi|_{T_1}\times\phi}&X_1\ar[d]^{\phi}\\T_2\times X_2\ar[r]^(0.6){\varphi_2}&X_2}
\end{gathered}
\end{equation}
If we replace $X_i$ by $T_i$ in the diagram, then it certainly commutes since $\phi|_{T_1}$ is a $\sigma$-algebraic group homomorphism. And since $T_1\times T_1$ is dense in $T_1\times X_1$, the whole diagram is commutative.
\end{proof}

The following lemma is taken from \cite[Lemma 6.3]{dd-tdv}.
\begin{lemma}\label{tmtv-prop}
Let $T_i$ be the $\sigma$-tori associated with the $\Z[x]$-lattices $M_i,i=1,2$ respectively. Then a map $\phi\colon T_1\rightarrow T_2$ is a $\sigma$-algebraic group homomorphism if and only if the corresponding map of the $\sigma$-coordinate rings $\phi^*\colon k[M_2]\rightarrow k[M_1]$ is induced by a $\Z[x]$-module homomorphism $\hat{\phi}\colon M_2\rightarrow M_1$.
\end{lemma}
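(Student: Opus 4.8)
The plan is to work entirely on the side of $\sigma$-coordinate rings via the antiequivalence, using that the group law on $T_i$ is dual to the $k$-$\sigma$-algebra homomorphism $\Phi_i\colon k[M_i]\to k[M_i]\otimes_k k[M_i]$, $\chi^{\bu}\mapsto\chi^{\bu}\otimes\chi^{\bu}$, exactly as in the description of the $\sigma$-algebraic group action on an affine toric $\p$-variety. Concretely, $\phi\colon T_1\to T_2$ is a $\sigma$-algebraic group homomorphism precisely when its dual $\phi^*\colon k[M_2]\to k[M_1]$ satisfies the intertwining relation $\Phi_1\circ\phi^*=(\phi^*\otimes\phi^*)\circ\Phi_2$ (compatibility with the group identity and with inversion is then automatic, by the usual argument, since each $\phi^*(\chi^{\bu})$ is a unit). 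So the heart of the matter is to show that this intertwining relation forces $\phi^*$ to send every monomial $\chi^{\bu}$ to a single monomial $\chi^{\bw}$.

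For the ``if'' direction, suppose $\phi^*$ is induced by a $\Z[x]$-module homomorphism $\hat{\phi}\colon M_2\to M_1$, that is $\phi^*(\chi^{\bu})=\chi^{\hat{\phi}(\bu)}$ for all $\bu\in M_2$; additivity of $\hat\phi$ makes this a $k$-algebra map and $\Z[x]$-linearity of $\hat\phi$ makes it commute with $\sigma$ (recall $\sigma(\chi^{\bu})=\chi^{x\bu}$). Evaluating both sides of the intertwining relation on $\chi^{\bu}$ gives $\chi^{\hat\phi(\bu)}\otimes\chi^{\hat\phi(\bu)}$ on each side, so $\phi$ is a $\sigma$-algebraic group homomorphism.

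For the ``only if'' direction, let $\phi$ be a $\sigma$-algebraic group homomorphism, fix $\bu\in M_2$, and write $\phi^*(\chi^{\bu})=\sum_{\bw}c_{\bw}\chi^{\bw}$ with $c_{\bw}\in k$ and almost all $c_{\bw}=0$. Applying $\Phi_1$ yields $\sum_{\bw}c_{\bw}\,\chi^{\bw}\otimes\chi^{\bw}$, whereas $(\phi^*\otimes\phi^*)(\chi^{\bu}\otimes\chi^{\bu})=\sum_{\bw,\bw'}c_{\bw}c_{\bw'}\,\chi^{\bw}\otimes\chi^{\bw'}$. Since the monomials $\chi^{\bw}\otimes\chi^{\bw'}$ are $k$-linearly independent in $k[M_1]\otimes_k k[M_1]$, comparing coefficients forces $c_{\bw}c_{\bw'}=0$ for $\bw\neq\bw'$ and $c_{\bw}^2=c_{\bw}$ for every $\bw$; as $\chi^{\bu}$ is a unit and $\phi^*$ is a unital ring map, $\phi^*(\chi^{\bu})\neq0$, so exactly one $c_{\bw}$ is nonzero and it equals $1$. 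Set $\hat\phi(\bu)$ to be that $\bw$, so $\phi^*(\chi^{\bu})=\chi^{\hat\phi(\bu)}$. Finally, multiplicativity of $\phi^*$ together with linear independence of monomials gives $\hat\phi(\bu+\bv)=\hat\phi(\bu)+\hat\phi(\bv)$, and $\phi^*\circ\sigma=\sigma\circ\phi^*$ gives $\chi^{\hat\phi(x\bu)}=\sigma(\chi^{\hat\phi(\bu)})=\chi^{x\hat\phi(\bu)}$, i.e.\ $\hat\phi(x\bu)=x\hat\phi(\bu)$; hence $\hat\phi\colon M_2\to M_1$ is a $\Z[x]$-module homomorphism inducing $\phi^*$.

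The only real obstacle is the linear independence of the distinct monomials $\chi^{\bw}$ in $k[M_1]$ (and of the $\chi^{\bw}\otimes\chi^{\bw'}$ in $k[M_1]\otimes_k k[M_1]$), which is precisely what lets us conclude that $\phi^*(\chi^{\bu})$ is a single monomial, i.e.\ that the grouplike elements of $k[M_1]$ are exactly the $\chi^{\bw}$. This is the classical independence of characters: forgetting $\sigma$, $k[M_1]$ is the group algebra of the abelian group $(M_1,+)$, for which the basis $\{\chi^{\bw}\}$ is $k$-linearly independent by construction; the tensor case follows from $k[M_1]\otimes_k k[M_1]\cong k[M_1\oplus M_1]$. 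Everything else is a routine dualization of the group-homomorphism condition.
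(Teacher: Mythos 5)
Your proof is correct. Note first that the paper itself gives no proof of this lemma — it is quoted verbatim from \cite[Lemma~6.3]{dd-tdv} — so the real comparison is against the standard argument one expects there, and that is exactly what you give. The core step, that the intertwining identity $\Phi_1\circ\phi^*=(\phi^*\otimes\phi^*)\circ\Phi_2$ forces $\phi^*(\chi^{\bu})$ to be group-like and hence a single $\chi^{\bw}$, is the usual ``group-like elements of a group algebra are exactly the group elements'' argument, and you handle the two ingredients that could have been glossed over: $k$-linear independence of the $\chi^{\bw}$ (built into $k[M_1]=\bigoplus_{\bw}k\chi^{\bw}$, with the tensor case via $k[M_1]\otimes_k k[M_1]\simeq k[M_1\oplus M_1]$), and nonvanishing of $\phi^*(\chi^{\bu})$ because $\chi^{\bu}$ is a unit. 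The reduction to the comultiplication-intertwining condition alone is justified: any algebra map automatically respects the unit, and compatibility with counit and antipode then follows from the usual group/Hopf argument once every $\phi^*(\chi^{\bu})$ is an invertible group-like. Finally, deducing $\Z[x]$-linearity of $\hat\phi$ from multiplicativity and $\sigma$-equivariance of $\phi^*$ (using $\sigma(\chi^{\bu})=\chi^{x\bu}$) is exactly the right bookkeeping for the difference structure, which is the only point where the present setting differs from the classical toric case.
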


\begin{theorem}\label{tmpv-thm}
Let $X_i=\Spec^{\p}(k[S_i])$ be affine toric $\p$-varieties coming from affine $P[x]$-semimodules $S_i, i=1,2$ with $\D$-tori $T_i$ respectively. Then a morphism $\phi\colon X_1\rightarrow X_2$ is toric if and only if it is induced by a $P[x]$-semimodule morphism $\hat{\phi}\colon S_2\rightarrow S_1$.
\end{theorem}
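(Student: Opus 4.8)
The plan is to prove the two implications separately, with Lemma~\ref{tmtv-prop} serving as the bridge between $\sigma$-algebraic group homomorphisms of the $\D$-tori and $\Z[x]$-module homomorphisms of the ambient lattices $M_i=S_i^{md}$, and with the $S_i$-gradings on $k[S_i]\subseteq k[M_i]$ doing the bookkeeping.

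For the ``if'' direction, suppose $\hat\phi\colon S_2\to S_1$ is a $P[x]$-semimodule morphism. It extends uniquely to a $\Z[x]$-module homomorphism $\hat\phi^{md}\colon M_2\to M_1$, $\bu-\bv\mapsto\hat\phi(\bu)-\hat\phi(\bv)$, which is well defined by the additivity of $\hat\phi$. Both $\hat\phi$ and $\hat\phi^{md}$ induce $k$-$\sigma$-algebra homomorphisms $\chi^{\bu}\mapsto\chi^{\hat\phi(\bu)}$, namely $\bar\phi\colon k[S_2]\to k[S_1]$ and a map $k[M_2]\to k[M_1]$ ($\sigma$-equivariance is automatic since $x\in P[x]$, and $\bar\phi$ is a morphism of $k$-$\p$-algebras because it sends $\p$-terms to $\p$-terms), and these fit into a commutative square whose vertical arrows are the inclusions $k[S_i]\hookrightarrow k[M_i]$ dual to the torus embeddings $T_i\subseteq X_i$. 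Dualizing this square of $k$-$\p$-algebras gives a commutative square of $\p$-varieties which shows $\phi(T_1)\subseteq T_2$ and identifies $\phi|_{T_1}$ with the morphism dual to $\chi^{\bu}\mapsto\chi^{\hat\phi^{md}(\bu)}$; the latter is a $\sigma$-algebraic group homomorphism by Lemma~\ref{tmtv-prop}. Hence $\phi=(\bar\phi)^*$ is toric.

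For the ``only if'' direction, let $\phi\colon X_1\to X_2$ be toric, with dual map of $k$-$\p$-algebras $\phi^*\colon k[S_2]\to k[S_1]$. Since $\phi|_{T_1}\colon T_1\to T_2$ is a $\sigma$-algebraic group homomorphism, Lemma~\ref{tmtv-prop} provides a $\Z[x]$-module homomorphism $\hat\phi\colon M_2\to M_1$ with $(\phi|_{T_1})^*(\chi^{\bw})=\chi^{\hat\phi(\bw)}$ for all $\bw\in M_2$. Dualizing the identity $\phi\circ(T_1\hookrightarrow X_1)=(T_2\hookrightarrow X_2)\circ\phi|_{T_1}$ yields $\iota_1^*\circ\phi^*=(\phi|_{T_1})^*\circ\iota_2^*$, where $\iota_i^*\colon k[S_i]\hookrightarrow k[M_i]$ are the injective maps dual to the torus embeddings. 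Evaluating on $\chi^{\bu}$ for $\bu\in S_2$ gives $\phi^*(\chi^{\bu})=\chi^{\hat\phi(\bu)}$ inside $k[M_1]$; but $\phi^*(\chi^{\bu})\in k[S_1]=\bigoplus_{\bw\in S_1}k\chi^{\bw}$, so $\chi^{\hat\phi(\bu)}$ already lies in $k[S_1]$, forcing $\hat\phi(\bu)\in S_1$. Thus $\hat\phi$ restricts to a $P[x]$-semimodule morphism $S_2\to S_1$, and since $\phi^*$ agrees with $\chi^{\bu}\mapsto\chi^{\hat\phi(\bu)}$ on a generating set $\{\chi^{\bu}\}$ of $k[S_2]$, the morphism $\phi$ is induced by this restriction.

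The main obstacle is the last step of the ``only if'' direction: the homomorphism $\hat\phi$ supplied by Lemma~\ref{tmtv-prop} is a priori defined only between the ambient lattices $M_i$, and one must argue that it carries the sub-semimodule $S_2$ into $S_1$. This rests on the compatibility $\iota_1^*\circ\phi^*=(\phi|_{T_1})^*\circ\iota_2^*$ together with the structural fact that $k[S_i]$ is precisely the $k$-span of $\{\chi^{\bw}\mid\bw\in S_i\}$ inside $k[M_i]$; everything else is formal diagram chasing through the antiequivalence between affine toric $\p$-varieties and affine $k$-$\p$-algebras and through Lemma~\ref{tmtv-prop}.
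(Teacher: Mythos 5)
Your proof is correct and follows essentially the same route as the paper: both directions hinge on Lemma~\ref{tmtv-prop} applied to $\phi|_{T_1}$ and the fact that $k[S_i]$ sits inside $k[M_i]$ as the $k$-span of $\{\chi^{\bu}\mid\bu\in S_i\}$. You simply spell out the commutative-square/diagram-chase details (compatibility of $\phi^*$ with the inclusions $k[S_i]\hookrightarrow k[M_i]$, and why $\hat\phi$ carries $S_2$ into $S_1$) that the paper leaves implicit in the one-line step ``this, combined with $\phi^*(k[S_2])\subseteq k[S_1]$, implies that $\tilde\phi$ induces a $P[x]$-semimodule morphism.''
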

\begin{proof}
``$\Leftarrow$''. Suppose $\hat{\phi}\colon S_2\rightarrow S_1$ is a $P[x]$-semimodule morphism.
Then $\hat{\phi}$ extends to a $\Z[x]$-module homomorphism $\hat{\phi}\colon M_2\rightarrow M_1$, where $M_1=S_1^{md}, M_2=S_2^{md}$.
By Lemma \ref{tmtv-prop}, it induces a morphism of $\sigma$-algebraic groups $\phi\colon T_1\rightarrow T_2$. So $\phi$ is toric.

``$\Rightarrow$''. $\phi$ induces $\phi^*\colon k[S_2]\rightarrow k[S_1]$. Since $\phi$ is toric, $\phi|_{T_1}$ is a $\sigma$-algebraic group homomorphism. By Lemma \ref{tmtv-prop}, it is induced by a $\Z[x]$-module homomorphism $\tilde{\phi}\colon M_2\rightarrow M_1$. This, combined with $\phi^*(k[S_2])\subseteq k[S_1]$, implies that $\tilde{\phi}$ induces a $P[x]$-semimodule morphism $\hat{\phi}\colon S_2\rightarrow S_1$.
\end{proof}

Combining Theorem \ref{aspv-thm3} with Theorem \ref{tmpv-thm}, we have
\begin{theorem}
The category of affine toric $P[\sigma]$-varieties with toric morphisms is antiequivalent to the category of affine $P[x]$-semimodules with $P[x]$-semimodule morphisms.
\end{theorem}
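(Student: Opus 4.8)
The plan is to establish the antiequivalence by exhibiting the two contravariant functors and checking they are mutually quasi-inverse, using the results already assembled in this section. Let $\mathcal{T}$ denote the category of affine toric $\p$-varieties with toric morphisms, and $\mathcal{S}$ the category of affine $P[x]$-semimodules with $P[x]$-semimodule morphisms. First I would define $F\colon\mathcal{S}\to\mathcal{T}$ on objects by $F(S)=\Spec^{\p}(k[S])$; this lands in $\mathcal{T}$ by Theorem \ref{aspv-thm3}. On morphisms, a $P[x]$-semimodule morphism $\psi\colon S_1\to S_2$ induces the $k$-$\sigma$-algebra homomorphism $\bar\psi\colon k[S_1]\to k[S_2]$, $\chi^{\bu}\mapsto\chi^{\psi(\bu)}$, which maps $\p$-terms to $\p$-terms, hence (after checking it respects the relevant $\p$-prime ideal structure so it is a morphism of $k$-$P[\sigma]$-algebras) dualizes to a morphism $\psi^*\colon\Spec^{\p}(k[S_2])\to\Spec^{\p}(k[S_1])$. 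One must verify $\psi^*$ is toric: since $\psi$ extends to a $\Z[x]$-module homomorphism $S_1^{md}\to S_2^{md}$, Lemma \ref{tmtv-prop} shows the restriction to the $\D$-tori is a $\sigma$-algebraic group homomorphism, and the inclusion of coordinate rings guarantees $\psi^*(T_2)\subseteq T_1$. Functoriality (contravariance, preservation of identities and composition) is routine from functoriality of $S\mapsto k[S]$ and of the $\Spec^{\p}$-duality.

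Next I would define $G\colon\mathcal{T}\to\mathcal{S}$. On objects, given an affine toric $\p$-variety $X$, Theorem \ref{aspv-thm3} produces an affine $P[x]$-semimodule $S$ with $X\simeq\Spec^{\p}(k[S])$; concretely, inside the $\D$-torus's coordinate ring $k[M]$ (with $M=X(T)$ the character lattice of the ambient $\D$-torus $T\subseteq X$) one has $k\{X\}=\bigoplus_{\bu\in S}k\chi^{\bu}$, and $S$ can be recovered intrinsically as the set of characters $\bu\in M$ with $\chi^{\bu}\in k\{X\}$. Set $G(X)=S$. On morphisms, given a toric morphism $\phi\colon X_1\to X_2$, Theorem \ref{tmpv-thm} yields a $P[x]$-semimodule morphism $\hat\phi\colon S_2\to S_1$; set $G(\phi)=\hat\phi$. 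Here one should check $\hat\phi$ is well-defined (independent of the identifications) — this follows because $\hat\phi$ is the restriction to $S_2$ of the $\Z[x]$-module map $M_2\to M_1$ underlying $\phi|_{T_1}$ via Lemma \ref{tmtv-prop}, which is itself canonically determined by $\phi$. Contravariant functoriality of $G$ again reduces to functoriality of the character-lattice construction and Lemma \ref{tmtv-prop}.

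Finally I would check $F$ and $G$ are mutually quasi-inverse. The composite $G\circ F$ sends $S$ to the set of characters $\bu$ with $\chi^{\bu}\in k[S]$, which is $S$ itself (as $k[S]=\bigoplus_{\bu\in S}k\chi^{\bu}$ by construction), giving $G\circ F\cong \mathrm{id}_{\mathcal{S}}$; on morphisms this identity is immediate from the explicit formula $\bar\psi(\chi^{\bu})=\chi^{\psi(\bu)}$. For $F\circ G$, given $X$ with its $\D$-torus $T\subseteq X$, Theorem \ref{aspv-thm3} gives a canonical isomorphism $X\simeq\Spec^{\p}(k[G(X)])=F(G(X))$, and one checks these isomorphisms are natural in $X$ with respect to toric morphisms, using that the identification is induced by the coordinate-ring equality $k\{X\}=k[G(X)]$ and that Theorem \ref{tmpv-thm} builds $G(\phi)$ precisely so that $\phi^*$ corresponds to $\overline{G(\phi)}$ under these identifications.

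The main obstacle I anticipate is not any single deep fact — all the heavy lifting (Theorems \ref{aspv-thm3}, \ref{tmpv-thm}, \ref{atpv-thm4} and Lemma \ref{tmtv-prop}) is already in hand — but rather the bookkeeping needed to make the two constructions genuinely \emph{functorial} rather than merely bijective on objects and on Hom-sets: one must pin down the canonical choice of semimodule $S=G(X)$ inside the character lattice so that $G$ is defined without arbitrary choices, and then verify that $G(\phi\circ\psi)=G(\psi)\circ G(\phi)$ and that the unit and counit isomorphisms are natural. Care is also needed to confirm that the $k$-$\sigma$-algebra homomorphism $\bar\psi$ induced by a $P[x]$-semimodule morphism really is a morphism of $k$-$P[\sigma]$-algebras in the sense required (it sends $\p$-terms to $\p$-terms, which is clear) so that it dualizes to a morphism of $\p$-varieties, and that the toric condition $\phi(T_1)\subseteq T_2$ together with $\phi|_{T_1}$ being a $\sigma$-algebraic group homomorphism is exactly matched by the semimodule-morphism condition via Theorem \ref{tmpv-thm}.
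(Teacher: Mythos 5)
Your proposal is correct and follows the same route the paper takes: the paper simply states ``Combining Theorem \ref{aspv-thm3} with Theorem \ref{tmpv-thm}, we have'' the antiequivalence, and you have spelled out the two contravariant functors $S\mapsto\Spec^{\p}(k[S])$ and $X\mapsto S$ together with the naturality and quasi-inverse checks that this combination implicitly requires.
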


\subsection{$T$-Orbits of Affine Toric $P[\sigma]$-Varieties}
In this subsection, we will establish a one-to-one correspondence between the irreducible $T$-invariant $\p$-subvarieties of an affine toric $\p$-variety and the faces of the corresponding affine $P[x]$-semimodule. Also, a one-to-one correspondence between $T$-orbits and faces of corresponding affine $P[x]$-semimodules is given for a class of affine $P[x]$-semimodules.
\begin{definition}
Let $S$ be an affine $P[x]$-semimodule. Define a {\em face} of $S$ to be a $P[x]$-subsemimodule $F\subseteq S$ such that
\begin{description}
\item[(1)] for $\bu_1,\bu_2\in S$, $\bu_1+\bu_2\in F$ implies $\bu_1,\bu_2\in F$;
\item[(2)] for $g\in P[x]^*$ and $\bu\in S$, $g\bu\in F$ implies $\bu\in F$,
\end{description}
which is denoted by  $F\preceq S$.
\end{definition}

Note that if $S=P[x](\{\bu_1,\bu_2,\ldots,\bu_m\})$, and $F$ is a face of $S$, then $F$ is generated by a subset of $\{\bu_1,\bu_2,\ldots,\bu_m\}$ as a $P[x]$-semimodule. It follows that $F$ is an affine $P[x]$-semimodule and $S$ has only finitely many faces. $S$ is a face of itself. A {\em proper face} of $S$ is a face strictly contained in $S$.
It is easy to check that the intersection of two faces is again a face and a face of a face is again a face. A face of rank $1$ is called an {\em edge}, and a face of rank $\rank(S)-1$ is called a {\em facet}. Note that unlike affine $\N[x]$-semimodules, a proper face of an affine $P[x]$-semimodule $S$ must have rank less than $\rank(S)$. $S$ is said to be {\em pointed} if $S\cap (-S)=\{\mathbf{0}\}$, i.e., $\{\mathbf{0}\}$ is a face of $S$.

\begin{example}
Let $S=P[x](\{x-1,x-2\})$. Then $S$ has two faces: $F_1=\{0\}$ and $F_2=S$.
\end{example}
\begin{example}
Let $S=P[x](\{\bu_1=(x,1),\bu_2=(x,2),\bu_3=(x,3)\})$. Then $S$ has four faces: $F_1=\{\mathbf{0}\}$, $F_2=P[x](\{\bu_1\})$, $F_3=P[x](\{\bu_3\})$ and $F_4=S$.
\end{example}
\begin{example}
Let $S=P[x](\{\bu_1=(x,1,1),\bu_2=(1,x,1),\bu_3=(1,1,x),\bu_4=(1,1,1)\})$. Then $S$ has eight faces: $F_1=\{\mathbf{0}\}$, $F_2=P[x](\{\bu_1\})$, $F_3=P[x](\{\bu_2\})$, $F_4=P[x](\{\bu_3\})$, $F_5=P[x](\{\bu_2,\bu_3\})$, $F_6=P[x](\{\bu_1,\bu_3\})$, $F_7=P[x](\{\bu_1,\bu_2\})$ and $F_8=S$.
\end{example}

\begin{lemma}\label{oapv-lemma}
A subset $F$ of $S$ is a face if and only if $k[S\backslash F]$ is a $P[\sigma]$-prime ideal of $k[S]$.
\end{lemma}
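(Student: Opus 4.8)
The plan is to analyze the subspace $k[S\backslash F] = \bigoplus_{\bu\in S\backslash F}k\chi^{\bu}$ inside $k[S]$ and show that the two face conditions are exactly what is needed for this to be a $\sigma$-ideal that is moreover $P[\sigma]$-prime, and conversely. First I would check that $k[S\backslash F]$ is an algebraic ideal: given $\bu\in S\backslash F$ and $\bv\in S$, I need $\bu+\bv\notin F$; this is precisely the contrapositive of face condition (1) (if $\bu+\bv\in F$ then $\bu\in F$). It is a $\sigma$-ideal since $\sigma(\chi^{\bu})=\chi^{x\bu}$ and I must know $x\bu\notin F$ when $\bu\notin F$, which follows from face condition (2) with $g=x$. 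For the $P[\sigma]$-ideal property I argue similarly: if $\Y^{\bw}\cdot\chi^{\bu}=\chi^{\bw+\bu}$ lies in $k[S\backslash F]$ (i.e.\ $\bw+\bu\notin F$) then $\chi^{g\bw+\bu}$ must too for any $g\in P[x]^*$; supposing $g\bw+\bu\in F$, conditions (1) and (2) would force $\bw\in F$ and hence $\bw+\bu\in F$ by the subsemimodule property—contradiction. So each face yields a $P[\sigma]$-ideal.

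Next I would establish primeness of $k[S\backslash F]$. Since $k[S]$ is an $S$-graded domain (it embeds in a Laurent $\sigma$-polynomial ring via $S\hookrightarrow S^{md}\hookrightarrow\Z[x]^n$), the complement $F$ being a subsemimodule means $k[F]$ is a graded subalgebra and $k[S]/k[S\backslash F]\simeq k[F]$ as $k$-algebras; as $k[F]$ is again of the form $k[S']$ for an affine $P[x]$-semimodule, it is a $\sigma$-domain, so $k[S\backslash F]$ is prime. Finally I would verify it is a $P[\sigma]$-prime ideal in the sense defined after Theorem~\ref{apv-thm1}, namely that its preimage under the relevant quotient map is $P[\sigma]$-perfect: combine the prime property (which gives perfect, since $P[\sigma]$-prime $\Rightarrow$ perfect) with the $P[\sigma]$-ideal property just shown.

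For the converse, suppose $\mathfrak{p}:=k[S\backslash F]$ is a $P[\sigma]$-prime ideal of $k[S]$ (here $F=\{\bu\in S\mid \chi^{\bu}\notin\mathfrak{p}\}$, a subset of $S$). Because $\mathfrak{p}$ is a monomial $k$-subspace and $k[S]/\mathfrak{p}$ is a domain, $F$ must be closed under addition: if $\bu_1,\bu_2\in F$ but $\bu_1+\bu_2\in S\backslash F$, then $\chi^{\bu_1}\chi^{\bu_2}\in\mathfrak{p}$ while neither factor is in $\mathfrak{p}$, contradicting primeness; this shows $F$ is a $P[x]$-subsemimodule (closure under scaling by $P[x]$ follows from the $\sigma$-ideal and $P[\sigma]$-ideal properties of $\mathfrak{p}$, reversing the argument of the first paragraph). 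Condition (1) of a face is then exactly this primeness argument run the other way: $\bu_1+\bu_2\in F$, i.e.\ $\chi^{\bu_1+\bu_2}\notin\mathfrak{p}$, forces $\chi^{\bu_1}\notin\mathfrak{p}$ and $\chi^{\bu_2}\notin\mathfrak{p}$ since $\mathfrak{p}$ is an ideal. Condition (2) comes from the $P[\sigma]$-ideal property: if $g\bu\in F$ with $g\in P[x]^*$ but $\bu\notin F$, then $\chi^{\bu}\in\mathfrak{p}$; writing $\chi^{\bu}=\Y^{\bu}\cdot 1$ and applying the $P[\sigma]$-ideal closure gives $\chi^{g\bu}\in\mathfrak{p}$, contradicting $g\bu\in F$.

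The main obstacle I anticipate is the bookkeeping around the two slightly different notions of ``$P[\sigma]$-ideal/prime'': the ambient definition in $k\{y_1,\ldots,y_m\}^{P[\sigma]}$ versus the quotient version in $k[S]=k\{y_1,\ldots,y_m\}^{P[\sigma]}/J_U$ defined via preimages. I would handle this by pulling everything back along the surjection $k\{y_1,\ldots,y_m\}^{P[\sigma]}\to k[S]$, $y_i\mapsto\chi^{\bu_i}$, noting that the preimage of $k[S\backslash F]$ is a monomial-type $P[\sigma]$-ideal whose combinatorics is controlled by the face $F$ exactly as above; the perfectness then follows automatically once primeness is in hand, since every $P[\sigma]$-prime ideal is perfect. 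The rest is routine translation between monomials $\chi^{\bu}$ and lattice-point arithmetic in $S\subseteq\Z[x]^n$.
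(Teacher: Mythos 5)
Your overall strategy matches the paper's: verify that the face conditions make $k[S\backslash F]$ an $S$-graded, monomial ideal that is a $P[\sigma]$-ideal and prime, and run the correspondences backwards for the converse. Your forward direction is a genuinely more self-contained argument than the paper's (the paper outsources primeness to the $\N[x]$-case via a citation, then only checks the $P[\sigma]$-ideal property; you instead observe directly that $k[S]/k[S\backslash F]\simeq k[F]$ is a $\sigma$-domain). Two small cautions there: your $P[\sigma]$-ideal verification is carried out only for monomial $f$, so you should say explicitly that $k[S\backslash F]$ is $S$-graded and reduce the general $\Y^{\bw}f\in I$ to the monomial case; and the phrase ``combine the prime property (which gives perfect, since $P[\sigma]$-prime $\Rightarrow$ perfect)'' is circular as written --- you are in the middle of proving the ideal is $P[\sigma]$-prime, so you cannot invoke that implication. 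What you actually need is that $k[S\backslash F]$ is a \emph{reflexive} $\sigma$-ideal (which follows from $F$ being closed under scaling by $x$) together with primeness; then it is a $\sigma$-prime ideal, hence perfect, and adding the $P[\sigma]$-ideal property gives $P[\sigma]$-perfect.

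The genuine gap is in the converse. You derive closure of $F$ under addition from primeness and conditions (1), (2) from the ideal and $P[\sigma]$-ideal properties --- all correct --- but you then claim that closure of $F$ under scaling by $P[x]$, i.e.\ $\bu\in F$, $g\in P[x]^*$ $\Rightarrow$ $g\bu\in F$, ``follows from the $\sigma$-ideal and $P[\sigma]$-ideal properties of $\mathfrak{p}$, reversing the argument of the first paragraph.'' It does not. Restated in terms of $\mathfrak{p}=k[S\backslash F]$, what you need is: $\chi^{g\bu}\in\mathfrak{p}$ implies $\chi^{\bu}\in\mathfrak{p}$. The $\sigma$-ideal property gives $b\in\mathfrak{p}\Rightarrow\sigma(b)\in\mathfrak{p}$ and the $P[\sigma]$-ideal property gives $\Y^{\bw}f\in\mathfrak{p}\Rightarrow\Y^{g\bw}f\in\mathfrak{p}$; both go in the \emph{opposite} direction from what you want. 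The implication you need is a ``divide out'' statement, and it comes from the \emph{perfect} $\sigma$-ideal property combined with the $P[\sigma]$-ideal property --- precisely the content of Remark~\ref{pi-remark}: since $\mathfrak{p}$ is $P[\sigma]$-perfect, $\Y^{g\bw}\cdot 1\in\mathfrak{p}$ forces $\Y^{\bw}\in\mathfrak{p}$ because $\bw$ and $g\bw$ have the same support. The paper's proof invokes perfectness at exactly this step. Your argument as stated does not establish that $F$ is a $P[x]$-subsemimodule, which is part of the definition of a face, so the converse is incomplete without this repair.
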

\begin{proof}
Let $I=k[S\backslash F]:=\bigoplus_{u\in S\backslash F}k\chi^{\bu}$.

``$\Rightarrow$''. $F$ can be viewed as a face of $S$ as an $\N[x]$-semimodule, so by \cite[Lemma 6.8]{dd-tdv}, $I$ is a $\sigma$-prime ideal. We need to show $I$ is a $\p$-ideal. Suppose $\Y^{\bu}f\in I$ and $f=\sum_i\alpha_i\Y^{\bu_i}$, then $\Y^{\bu}f=\sum_i\alpha_i\Y^{\bu+\bu_i}$. Since $I$ is $(S\backslash F)$-graded, $\bu+\bu_i\in S\backslash F$. For $g\in P[x]^*$, $\Y^{g\bu}f\in I$ is equivalent to $g\bu+\bu_i\in S\backslash F$ for all $i$. Suppose the contrary, $g\bu+\bu_i\in F$ for some $i$. Because $F$ is a face of $S$, it follows that $\bu,\bu_i\in F$ and hence $\bu+\bu_i\in F$, which is a contradictory.

``$\Leftarrow$''. Since $I$ is a $P[\sigma]$-ideal, $\bu_1\in S\backslash F$ or $\bu_2\in S\backslash F$ implies $\bu_1+\bu_2\in S\backslash F$, and for $g\in P[x]^*$, $\bu\in S\backslash F$ implies $g\bu\in S\backslash F$. As a consequence, $\bu_1+\bu_2\in F$ implies $\bu_1,\bu_2\in F$, and for $g\in P[x]^*$, $g\bu\in F$ implies $\bu\in F$. Moreover, since $I$ is prime, $\bu_1+\bu_2\in S\backslash F$ implies $\bu_1\in S\backslash F$ or $\bu_2\in S\backslash F$. As a consequence, $\bu_1,\bu_2\in F$ implies $\bu_1+\bu_2\in F$. Since $I$ is perfect, for $g\in P[x]^*$, $g\bu\in S\backslash F$ implies $\bu\in S\backslash F$. As a consequence, for $g\in P[x]^*$, $\bu\in F$ implies $g\bu\in F$. Thus $F$ is a face of $S$.
\end{proof}

Let $X=\Spec^{\p}(k[S])$ be an affine toric $\p$-variety and $T$ the $\sigma$-torus of $X$.
A $\p$-subvariety $Y$ of $X$ is said to be {\em invariant} under the action of $T$
if $T\cdot Y\subseteq Y$.
For a face $F$ of $S$, let $Y=\Spec^{\p}(k[F])$. Without loss of generality, assume that $S=\N[x](\{\bu_1,\ldots,\bu_m\})$ and $F=\N[x](\{\bu_1,\ldots,\bu_r\})$. We always view $Y$ as a $\p$-subvariety of $X$ through the embedding $j\colon Y\rightarrow X, \gamma\in Y(K)\mapsto (\gamma(\bu_1),\ldots,\gamma(\bu_r),0,\ldots,0)\in X(K)$ for each $K\in \mathscr{E}_k$. The following theorem gives a description for irreducible invariant $\p$-subvarieties of $X$ in terms of the faces of $S$.

\begin{theorem}\label{oapv-thm1}
Let $X=\Spec^{P[\sigma]}(k[S])$ be an affine toric $P[\sigma]$-variety and $T$ the $\D$-torus of $X$. Then the irreducible invariant $P[\sigma]$-subvarieties of $X$ under the action of $T$ are in an inclusion-preserving bijection with the faces of $S$. More precisely, if we denote the irreducible invariant $P[\sigma]$-subvariety corresponding to the face $F$ by $D(F)$, then $D(F)$ is defined by the $P[\sigma]$-ideal $k[S\backslash F]=\bigoplus_{u\in S\backslash F}k\chi^{\bu}$ and the $P[\sigma]$-coordinate ring of $D(F)$ is $k[F]=\bigoplus_{u\in F}k\chi^{\bu}$.
\end{theorem}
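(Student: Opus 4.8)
The plan is to pass everything through the $S$-grading of $k[S]$ together with the comodule map attached to the $T$-action. Recall that the action $T\times X\to X$ is dual to the $k$-$\sigma$-algebra homomorphism $\Phi\colon k[S]\to k[S^{md}]\otimes_k k[S]$ with $\Phi(\chi^{\bu})=\chi^{\bu}\otimes\chi^{\bu}$. The first step is to record the criterion that, for a $\p$-perfect ideal $I\subseteq k[S]$, the $\p$-subvariety $\V(I)$ is $T$-invariant if and only if $\Phi(I)\subseteq k[S^{md}]\otimes_k I$. Indeed $T\cdot\V(I)\subseteq\V(I)$ says that the restricted action $T\times\V(I)\to X$ has image in $\V(I)$, which on $\p$-coordinate rings means the composite $k[S]\xrightarrow{\Phi}k[S^{md}]\otimes_k k[S]\twoheadrightarrow k[S^{md}]\otimes_k(k[S]/I)$ kills $I$; since $k[S^{md}]$ is free over $k$, the kernel of that surjection is $k[S^{md}]\otimes_k I$, so the condition is exactly $\Phi(I)\subseteq k[S^{md}]\otimes_k I$.

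Granting this criterion, one direction is short. Given a face $F\preceq S$, Lemma \ref{oapv-lemma} says that $I_F:=k[S\backslash F]=\bigoplus_{\bu\in S\backslash F}k\chi^{\bu}$ is a $\p$-prime ideal of $k[S]$, so $D(F):=\V(I_F)$ is a $\p$-subvariety of $X$ with vanishing ideal $I_F$; moreover its $\p$-coordinate ring is $k[S]/I_F\cong\bigoplus_{\bu\in F}k\chi^{\bu}=k[F]$ (here one uses that a face is closed under addition, so the quotient really is $k[F]$ with its usual multiplication), and since $k[F]$ is a $\sigma$-domain, $D(F)$ is irreducible. Invariance is immediate from the criterion, because $\Phi(\chi^{\bu})=\chi^{\bu}\otimes\chi^{\bu}\in k[S^{md}]\otimes_k I_F$ for every $\bu\in S\backslash F$.

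For the converse I would take a nonempty irreducible invariant $\p$-subvariety $Y\subseteq X$ and set $I=\I(Y)$, a $\p$-prime ideal of $k[S]$. Invariance gives $\Phi(I)\subseteq k[S^{md}]\otimes_k I$, and for $f=\sum_{\bu}\alpha_{\bu}\chi^{\bu}\in I$ one has $\Phi(f)=\sum_{\bu}\chi^{\bu}\otimes\alpha_{\bu}\chi^{\bu}$; comparing coefficients in the $k$-basis $\{\chi^{\bu}\}_{\bu\in S^{md}}$ of $k[S^{md}]$ forces $\alpha_{\bu}\chi^{\bu}\in I$ for every $\bu$, which is the same homogeneity argument used in the proof of Theorem \ref{atpv-thm4}. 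Hence $I=\bigoplus_{\bu\in A}k\chi^{\bu}$ with $A=\{\bu\in S\mid\chi^{\bu}\in I\}$, so $I=k[S\backslash F]$ for $F:=S\backslash A$; since $I$ is $\p$-prime, Lemma \ref{oapv-lemma} then gives that $F$ is a face of $S$ and $Y=\V(I)=D(F)$.

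It remains to check that $F\mapsto D(F)$ is injective and order preserving, which is formal once we invoke the inclusion-reversing bijection between $\p$-subvarieties of $X$ and $\p$-perfect ideals of $k[S]$ (the corollary following Theorem \ref{apv-thm1}): the assignment $F\mapsto S\backslash F$ is inclusion-reversing and injective (it is recovered from $k[S\backslash F]$ as the support of that ideal), whence $F_1\subseteq F_2$ iff $k[S\backslash F_2]\subseteq k[S\backslash F_1]$ iff $D(F_1)\subseteq D(F_2)$, which in particular yields injectivity; the description of the defining ideal and of the $\p$-coordinate ring of $D(F)$ has already been obtained in the second step. I expect the one genuinely delicate point to be the converse direction, namely pinning down the translation of $T$-invariance into the comodule inclusion and then running the homogeneity argument; the remaining steps are bookkeeping with the $S$-grading and Lemma \ref{oapv-lemma}.
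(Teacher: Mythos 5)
Your proof is correct and follows essentially the same route as the paper's: both directions are handled via Lemma \ref{oapv-lemma} (faces $\leftrightarrow$ $\p$-prime monomial ideals), and the converse reduces $T$-invariance to the comodule inclusion $\Phi(I)\subseteq k[S^{md}]\otimes_k I$, then uses the $S^{md}$-grading argument (as in Theorem \ref{atpv-thm4}) to show $I$ is a sum of monomial pieces. You spell out a few steps the paper leaves implicit (the precise derivation of the comodule criterion, the coefficient-comparison, and the inclusion-preservation), but there is no substantive difference in strategy.
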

\begin{proof}
For a face $F$ of $S$, let $Y=\Spec^{\p}(k[F])$. It is clear that $Y$ is invariant under the action of $T$. The defining ideal of $Y$ is $I=k[S\backslash F]$. Hence by Lemma \ref{oapv-lemma}, $Y$ is irreducible.

On the other hand, suppose $Y$ is an irreducible invariant $P[\sigma]$-subvariety of $X$ and is defined by the $P[\sigma]$-ideal $I$. Then $k\{Y\}=k[S]/I$. By definition, $Y$ is invariant under the $\D$-torus action if and only if the action of $T$ on $X$ induces an action on $Y$, i.e., we have the following commutative diagram:
$$\begin{gathered}
\xymatrix{k[S]\ar[r]^(0.37){\phi}\ar[d]&k[M]\otimes k[S]\ar[d]\\k\{Y\}\ar[r]&k[M]\otimes k\{Y\}}
\end{gathered}$$
where $M=S^{md}$. Since $k[M]\otimes k\{Y\}=k[M]\otimes (k[S]/I)\simeq k[M]\otimes k[S]/k[M]\otimes I$, we must have $\phi(I)\subseteq k[M]\otimes I$. As in the proof of Theorem \ref{atpv-thm4}, this is equivalent to the fact that $I$ is an $M$-graded ideal of $k[S]$, i.e., we can write $I=\oplus_{u\in S'}k\chi^{\bu}$, where $S'$ is a subset of $S$. Since $I$ is a $P[\sigma]$-prime ideal, by Lemma \ref{oapv-lemma}, $F:=S\backslash S'$ is a face of $S$. Moreover, since $I=k[S\backslash F]$, $k\{Y\}=k[S]/I=k[F]$.
\end{proof}

\begin{remark}
Suppose $F$ is a face of $S$. Note that for $K\in \mathscr{E}_k$, an element $\gamma\colon S\rightarrow K$ of $X(K)$ lies in $D(F)(K)$ if and only if $\gamma(S\backslash F)=0$.
\end{remark}

Suppose $X$ is an affine toric $\p$-variety with $\D$-torus $T$. By Theorem \ref{atpv-thm4}, for each $K\in \mathscr{E}_k$, $T(K)$ has a group action on $X(K)$, so we have orbits of $T(K)$ in $X(K)$ under the action. To construct a correspondence between orbits and faces, we need a new kind of affine $P[x]$-semimodules. Suppose $S$ is an affine $P[x]$-semimodule, we say $S$ is {\em face-saturated} if for any face $F$ of $S$, a morphism $\varphi\colon F\rightarrow K^*$ can be extended to a morphism $\widetilde{\varphi}\colon S\rightarrow K^*$ for any $K\in \mathscr{E}_k$. A necessary condition for $S$ to be face-saturated is that for any face $F$ of $S$, $F^{md}$ is $P[x]$-saturated in $S^{md}$, that is, for all $g\in P[x]^*$ and $\bu\in S^{md}$, $g\bu\in F^{md}$ implies $\bu\in F^{md}$.

\begin{example}
Let $S=P[x](\{(2,0),(1,1),(0,1)\})$ and $F=P[x](\{(2,0)\})$ a face of $S$. $(1,0)\in S^{md}$. Since $(1,0)\notin F$ and $2(1,0)\in F$, $S$ is not face-saturated.
\end{example}

We also have the following Orbit-Face correspondence theorem.
\begin{theorem}\label{oapv-thm}
Suppose $S$ is a face-saturated affine $P[x]$-semimodule. Let $X=\Spec^{\p}(k[S])$ be the affine toric $\p$-variety associated with $S$ and $T$ the $\sigma$-torus of $X$. Then for each $K\in \mathscr{E}_k$, there is a one-to-one correspondence between the orbits of $T(K)$ in $X(K)$ and the faces of $S$.
\end{theorem}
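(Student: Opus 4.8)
The plan is to build the bijection explicitly, using the support of a point as the orbit invariant. Via Corollary~\ref{aspv-prop} I identify $X(K)$ with $\Hom(S,K)$, and via Corollary~\ref{aspv-cor2} I identify $T(K)$ with $\Hom(S^{md},K^*)$, which restricts bijectively onto $\Hom(S,K^*)$; under these identifications the $T(K)$-action on $X(K)$ is the pointwise product $(t\cdot\gamma)(\bu)=t(\bu)\gamma(\bu)$. To $\gamma\in X(K)$ I associate its \emph{support} $F_\gamma:=\{\bu\in S\mid\gamma(\bu)\neq 0\}$. First I would check that $F_\gamma$ is a face of $S$: it is a $P[x]$-subsemimodule because $K$ is a field and $a^{g}\neq 0$ whenever $a\neq 0$ and $g\in P[x]^*$ (and $\gamma(\mathbf 0)=1$), while the two face axioms follow from $\gamma(\bu_1+\bu_2)=\gamma(\bu_1)\gamma(\bu_2)$ and $\gamma(g\bu)=\gamma(\bu)^{g}$ together with the convention $a^{g}=0\iff a=0$. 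Since $t(\bu)\in K^*$ for every $t\in T(K)$, we have $F_{t\cdot\gamma}=F_\gamma$, so $\gamma\mapsto F_\gamma$ descends to a map from $T(K)$-orbits to faces of $S$.

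Next, to each face $F$ I would attach a distinguished point $\gamma_F\in X(K)$, namely the map equal to $1$ on $F$ and to $0$ on $S\setminus F$. One must check $\gamma_F$ is a well-defined morphism $S\to K$: if an expression $\sum_i g_i\bu_i$ involves some $\bu_j$ with $g_j\in P[x]^*$ and $\bu_j\notin F$, then $\sum_i g_i\bu_i\notin F$ because $F$ is a face, so both sides of $\gamma_F(\sum_i g_i\bu_i)=\prod_i\gamma_F(\bu_i)^{g_i}$ vanish; otherwise every $\bu_i$ with $g_i\neq 0$ lies in $F$ and both sides equal $1$. The same case analysis also shows independence of the chosen representation $\sum_i g_i\bu_i$. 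By construction $F_{\gamma_F}=F$, which already yields surjectivity of the orbit-to-face map.

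The crux is injectivity, where the face-saturated hypothesis is used. Suppose $\gamma,\gamma'\in X(K)$ have $F_\gamma=F_{\gamma'}=:F$. Then $\bu\mapsto\gamma'(\bu)/\gamma(\bu)$ is a morphism $F\to K^*$ (a quotient of two such morphisms), so by face-saturation it extends to a morphism $\widetilde t\colon S\to K^*$, i.e.\ an element $t\in T(K)$. A direct computation gives $t\cdot\gamma=\gamma'$: on $F$ this is the defining property of $t$, and on $S\setminus F$ both sides vanish since $F_{t\cdot\gamma}=F_\gamma=F=F_{\gamma'}$. Hence $\gamma$ and $\gamma'$ lie in a single orbit. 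Taking $\gamma'=\gamma_F$ shows every orbit equals $T(K)\cdot\gamma_F$, where $F$ is the common support of its points; conversely distinct faces give distinct orbits because the support is an orbit invariant. This produces the desired bijection $F\longleftrightarrow T(K)\cdot\gamma_F$ between the faces of $S$ and the orbits of $T(K)$ in $X(K)$.

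I expect the only genuine obstacle to be the injectivity step: everything else is bookkeeping with the conventions for $a^{g}$ and with the fact that a $P[x]$-semimodule morphism is a map respecting all relations $\sum_i g_i\bu_i$. It is precisely at injectivity that one cannot proceed for a general affine $P[x]$-semimodule, and the face-saturated condition was introduced exactly to supply the extension of a morphism $F\to K^*$ to a morphism $S\to K^*$ needed there.
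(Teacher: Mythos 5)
Your proof is correct and follows the standard orbit--face correspondence strategy: identify points with morphisms $S\to K$, use the support $F_\gamma=\{\bu\in S\mid\gamma(\bu)\ne 0\}$ as the orbit invariant (checking via the convention $a^g=0\iff a=0$ that it is a face), construct the distinguished point $\gamma_F$ for surjectivity, and invoke the face-saturated hypothesis at exactly the right place --- extending the morphism $\bu\mapsto\gamma'(\bu)/\gamma(\bu)$ on $F$ to an element of $T(K)=\Hom(S,K^*)$ --- to show any two points with common support lie in a single orbit. The paper itself gives no details and merely cites the analogous Theorem~6.11 of the toric difference variety reference, which proceeds along the same lines, so your write-up is a faithful reconstruction of the intended argument rather than a genuinely different proof.
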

\begin{proof}
The proof is similar to \cite[Theorem 6.11]{dd-tdv}.
\end{proof}

\section{Projective Toric $P[\sigma]$-Varieties}
In this section, we will define projective toric $P[\sigma]$-varieties.
\subsection{Projective $P[\sigma]$-Varieties}
Let $k$ be a $\sigma$-field. A {\em $\sigma$-projective ($m$-)space} over $k$ is a functor $\PP^m=(\A^{m+1}\backslash\{\mathbf{0}\})/\A^*$ from the category of $\sigma$-field extensions of $k$ to the category of sets given by $\PP^m(K)=(K^{m+1}\backslash\{\mathbf{0}\})/K^*$ for $K\in \mathscr{E}_k$, where $K^*$ acts via homotheties, i.e.\ $\lambda\cdot(a_0,\ldots,a_m)=(\lambda a_0,\ldots,\lambda a_m)$ for $\lambda \in K^*$ and $(a_0,\ldots,a_m)\in K^{m+1}$.
\begin{definition}
A $P[\sigma]$-polynomial $f\in k\{y_0,\ldots,y_m\}^{P[\sigma]}$ is called {\em transformally homogeneous} if for a new $\D$-indeterminate $\lambda$, there exists a $\p$-monomial $M(\lambda)$ in $\lambda$ such that $f(\lambda y_0,\ldots,\lambda y_m)=M(\lambda)f(y_0,\ldots,y_m)$. A $\p$-ideal is {\em homogeneous} if it can be generated by a set of transformally homogeneous $\p$-polynomials.
\end{definition}

\begin{definition}
Suppose that $F$ is a set of transformally homogeneous $P[\sigma]$-polynomials in $k\{y_0,\ldots,y_m\}^{P[\sigma]}$. The {\em projective $P[\sigma]$-variety} over $k$ defined by $F$ is a subfunctor of $\PP^m$ given by $\V_K(F)=\{a\in \PP^m(K)\mid f(a)=0, \forall f\in F\}$ for each $K\in \mathscr{E}_k$.
\end{definition}

If $X$ is a projective $P[\sigma]$-variety, then the ideal $\I(X)$ generated by all transformally homogeneous $\p$-polynomials vanishing on $X$
is called the {\em vanishing ideal} of $X$ and
\[k\{X\}:=k\{y_0,\ldots,y_m\}^{P[\sigma]}/\mathbb{I}(X)\]
is called the {\em homogeneous $P[\sigma]$-coordinate ring} of $X$.

Suppose $X$ is a projective $k$-$P[\sigma]$-variety. Let $\Proj^{P[\sigma]}(k\{X\})$ be the set of all homogeneous $P[\sigma]$-prime ideals of $k\{X\}$ except $\{\bar{y_0},\ldots,\bar{y_m}\}^{P[\sigma]}$. Let $F\subseteq k\{X\}$. We set
$$\mathcal{V}(F):=\{\mathfrak{p}\in \Proj^{P[\sigma]}(k\{X\})\mid F\subseteq \mathfrak{p}\}\subseteq \Proj^{P[\sigma]}(k\{X\}).$$

The following lemma is easy to check.
\begin{lemma}\label{ptv-lemma}
Let $X$ be a projective $k$-$P[\sigma]$-variety and $F,G,F_i\subseteq k\{X\}$. Then
\begin{enumerate}
\item $\mathcal{V}({0})=\Proj^{P[\sigma]}(k\{X\})$ and $\mathcal{V}(k\{X\})=\varnothing$;
\item $\mathcal{V}(F)\cup \mathcal{V}(G)=\mathcal{V}(FG)$;
\item $\bigcap_i\mathcal{V}(F_i)=\mathcal{V}(\bigcup_iF_i)$.
\end{enumerate}
\end{lemma}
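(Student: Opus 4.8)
The plan is to verify the three identities directly from the definition $\mathcal{V}(F)=\{\mathfrak{p}\in\Proj^{P[\sigma]}(k\{X\})\mid F\subseteq\mathfrak{p}\}$, using only that every element of $\Proj^{P[\sigma]}(k\{X\})$ is a proper prime ideal of $k\{X\}$. The homogeneity and $P[\sigma]$-conditions defining $\Proj^{P[\sigma]}(k\{X\})$ play no role in the argument, since in each case we merely test membership of a fixed family of ideals in various subsets; this is exactly why the statement is ``easy to check'', and the proof parallels that of Lemma \ref{apv-lemma2} in the affine setting.

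For (1), since $0$ lies in every ideal, $\mathcal{V}(0)$ is all of $\Proj^{P[\sigma]}(k\{X\})$; and since a prime ideal is by definition proper, no $\mathfrak{p}\in\Proj^{P[\sigma]}(k\{X\})$ contains the unit of $k\{X\}$, so $\mathcal{V}(k\{X\})=\varnothing$. For (3), one checks equivalences: $\mathfrak{p}\in\bigcap_i\mathcal{V}(F_i)$ iff $F_i\subseteq\mathfrak{p}$ for every $i$ iff $\bigcup_iF_i\subseteq\mathfrak{p}$ iff $\mathfrak{p}\in\mathcal{V}(\bigcup_iF_i)$.

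For (2), I would argue both inclusions. If $\mathfrak{p}\in\mathcal{V}(F)\cup\mathcal{V}(G)$, say $F\subseteq\mathfrak{p}$, then every product $fg$ with $f\in F$ and $g\in G$ lies in $\mathfrak{p}$ because $\mathfrak{p}$ is an ideal, so $FG\subseteq\mathfrak{p}$ and $\mathfrak{p}\in\mathcal{V}(FG)$; symmetrically if $G\subseteq\mathfrak{p}$. Conversely, suppose $FG\subseteq\mathfrak{p}$ but $\mathfrak{p}\notin\mathcal{V}(F)$; then there is some $f\in F\setminus\mathfrak{p}$, and for every $g\in G$ we have $fg\in FG\subseteq\mathfrak{p}$, so primeness of $\mathfrak{p}$ forces $g\in\mathfrak{p}$; hence $G\subseteq\mathfrak{p}$ and $\mathfrak{p}\in\mathcal{V}(G)$. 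This gives $\mathcal{V}(FG)\subseteq\mathcal{V}(F)\cup\mathcal{V}(G)$, completing (2). There is no genuine obstacle here; the only point worth a word of care is that $\Proj^{P[\sigma]}(k\{X\})$ excludes the irrelevant ideal $\{\bar{y_0},\ldots,\bar{y_m}\}^{P[\sigma]}$, but this does not affect any step, since each identity is a statement quantified over precisely those primes that do lie in $\Proj^{P[\sigma]}(k\{X\})$.
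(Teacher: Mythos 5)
Your proof is correct. The paper itself gives no argument for this lemma (it simply says ``The following lemma is easy to check''), and your direct verification from the definition of $\mathcal{V}$ is exactly the standard argument one would write: (1) and (3) are immediate set-theoretic facts, and (2) uses that elements of $\Proj^{P[\sigma]}(k\{X\})$ are in particular proper prime ideals, with primeness invoked only for the inclusion $\mathcal{V}(FG)\subseteq\mathcal{V}(F)\cup\mathcal{V}(G)$. Your observation that the $P[\sigma]$-structure, homogeneity, and the exclusion of the irrelevant ideal play no role is also accurate; the lemma is purely about closure under ideal membership and primeness, parallel to Lemma~\ref{apv-lemma2} in the affine case.
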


Lemma \ref{ptv-lemma} shows that $\Proj^{P[\sigma]}(k\{X\})$ is a topological space with closed sets of the forms $\mathcal{V}(F)$.
\begin{definition}\label{ptv-def}
Let $X$ be a projective $k$-$P[\sigma]$-variety. Then the {\em topological space} of $X$ is $\Proj^{P[\sigma]}(k\{X\})$ equipped with the above topology.
\end{definition}

\subsection{$\Z[x]$-Lattice Points and Projective Toric $P[\sigma]$-Varieties}
Let $k$ be a $\sigma$-field. Note that $\PP^{m-1}$ is a toric $P[\sigma]$-variety with $\D$-torus
\begin{align*}
T_{\PP^{m-1}}&=\PP^{m-1}\backslash\V(y_0\ldots y_{m-1})=\{(a_0,\ldots,a_{m-1})\in \PP^{m-1}\mid a_0\ldots a_{m-1}\neq 0\}\\
&=\{(1,t_1,\dots,t_{m-1})\in \PP^{m-1}\mid t_1,\ldots,t_{m-1}\in \A^*\}\simeq(\A^*)^{m-1}.
\end{align*}
The action of $T_{\PP^{m-1}}$ on itself clearly extends to an action on $\PP^{m-1}$, making $\PP^{m-1}$ a toric $P[\sigma]$-variety. To describe the character lattice of $T_{\PP^{m-1}}$, consider the exact sequence of $\D$-tori
\[1\longrightarrow\A^*\longrightarrow(\A^*)^m\stackrel{\pi}{\longrightarrow}T_{\PP^{m-1}}\longrightarrow1.\]
Thus the character lattice of $T_{\PP^{m-1}}$ is
\[\mathscr{M}_{m-1}:=\{(a_0,\ldots,a_{m-1})\in \Z[x]^{m}\mid \sum^{m-1}_{i=0}a_i=0\}.\]

Let $U=\{\bu_1,\ldots,\bu_m\}\subset \Z[x]^n$ and $\T=(t_1,\ldots,t_n)$ a set of $\sigma$-indeterminates. In Section 5, we have defined the affine toric $P[\sigma]$-variety associated with $U$ as the closure of the image of the following map
\[\theta\colon(\A^*)^n \longrightarrow \A^m,  \T \mapsto\T^U = (\T^{\bu_1},  \ldots, \T^{\bu_m}).\]
To get a projective toric $P[\sigma]$-variety, we regard $\theta$ as a map to $(\A^*)^m$ and compose with the homomorphism $\pi\colon (\A^*)^m\rightarrow T_{\PP^{m-1}}$ to obtain
\begin{equation}\label{proj-1}
(\A^*)^n\stackrel{\theta}{\longrightarrow}(\A)^m\stackrel{\pi}{\longrightarrow}T_{\PP^{m-1}}\subseteq \PP^{m-1}.
\end{equation}
\begin{definition}\label{tpv-def1}
Given a finite set $U\subset \Z[x]^n$, the {\em projective toric $P[\sigma]$-variety} $Y_U$ is the closure in $\PP^{m-1}$ of the image of the map $\pi\circ\theta$ from (\ref{proj-1}) under the topology in Definition \ref{ptv-def}.
\end{definition}

For a finite set $U=\{\bu_1,\ldots,\bu_m\}\subset \Z[x]^n$, we have defined an affine toric $\p$-variety $X_U$ in Section 5 and a projective toric $\p$-variety $Y_U$ in Definition \ref{tpv-def1}. The following proposition reveals the relationship between $X_U$ and $Y_U$. Let $M=\Z[x](U)$ and $L=\Syz(U)$, then we have an exact sequence
\begin{equation}\label{ptpv-equ}
0\longrightarrow L\longrightarrow\Z[x]^m\longrightarrow M\longrightarrow 0.
\end{equation}
The vanishing ideal of $X_U$ is the binomial $\p$-ideal
\[J_L=[\Y^{\bu}-\Y^{\bv}\mid \bu,\bv\in P[x]^m \textrm{ with } \bu-\bv\in L].\]
\begin{prop}\label{ptpv-prop}
For a finite set $U\subset \Z[x]^n$, the followings are equivalent:
\begin{enumerate}
\item[(a)] $\I(X_U)=J_L=\I(Y_U)$;
\item[(b)] $J_L$ is homogeneous;
\item[(c)] There exists a vector $\bv\in\Z[x]^n$ and $g\in\Z[x]$ such that $\langle\bu_i,\bv\rangle=g$ for all $\bu_i\in U$.
\end{enumerate}
\end{prop}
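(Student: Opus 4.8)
The plan is to prove the cyclic chain of implications $(a)\Rightarrow(b)\Rightarrow(c)\Rightarrow(a)$, which is the most economical route and parallels the algebraic toric picture.

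\textbf{The implication $(a)\Rightarrow(b)$.} This is essentially by construction: $\I(Y_U)$ is the vanishing ideal of a projective $\p$-variety, hence by definition is generated by transformally homogeneous $\p$-polynomials, i.e. it is a homogeneous $\p$-ideal. So if $\I(X_U)=J_L=\I(Y_U)$, then $J_L$ is homogeneous. One should double-check that the binomial generators $\Y^{\bu}-\Y^{\bv}$ (with $\bu-\bv\in L$, $\bu,\bv\in P[x]^m$) behave correctly here, but the point is simply that $J_L$ coincides with an ideal already known to be homogeneous.

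\textbf{The implication $(b)\Rightarrow(c)$.} Suppose $J_L$ is homogeneous. I would argue that each binomial generator $\Y^{\bu}-\Y^{\bv}$ can be taken transformally homogeneous, which forces $|\bu|:=\sum_i u_i$ to equal $|\bv|$ for every relation $\bu-\bv\in L$ that can be realized with $\bu,\bv\in P[x]^m$; after clearing to positive parts, this says that the linear functional $\ell\colon\bw\mapsto\sum_i w_i$ vanishes on $L$ (one must be slightly careful because $L$ lives in $\Z[x]^m$ and $\ell$ takes values in $\Z[x]$, but the argument is the same as the lattice case with scalars in $\Z[x]$). Thus $\ell\in L^C$ after identifying $\ell$ with the all-ones vector $\mathbf{1}=(1,\ldots,1)\in\Z[x]^m$, i.e. $\mathbf{1}\in L^C$. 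Now recall from the proof of Lemma~\ref{atpv-lemma2} how $U$ is recovered from a generating set $V=\{\bv_1,\ldots,\bv_n\}$ of $L^C$: the rows of the matrix with columns $\bv_j$ are the $\bu_i$. Since $\mathbf{1}\in L^C=\Z[x](V)$, write $\mathbf{1}=\sum_j g_j\bv_j$ with $g_j\in\Z[x]$; reading off the $i$-th coordinate gives $1=\sum_j g_j (\bv_j)_i=\langle\bu_i,\mathbf{g}\rangle$ where $\mathbf{g}=(g_1,\ldots,g_n)\in\Z[x]^n$. So $(c)$ holds with $\bv=\mathbf{g}$ and $g=1$. (If one instead starts from the presentation $0\to L\to\Z[x]^m\to M\to 0$ directly, the statement $\mathbf{1}\in L^C$ is literally the assertion that there is $\bv\in\Z[x]^n$ with $\langle\bu_i,\bv\rangle$ constant, using that $\Z[x]^m/L\cong M\subseteq\Z[x]^n$ and Lemma~\ref{atv-lemma} to pass from $\Hom_{\Z[x]}(M,\Z[x])$ back to actual dot products; I expect this to be the delicate point.)

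\textbf{The implication $(c)\Rightarrow(a)$.} Assume $\langle\bu_i,\bv\rangle=g$ for all $i$, with $g\in\Z[x]$; we may reduce to $g\in P[x]$ (if $g<0$ replace $\bv$ by $-\bv$, and if $g=0$ handle separately or note the cone degenerates). Using $\bv$ as a grading vector on $k\{y_0,\ldots,y_{m-1}\}^{\p}$ via $\deg(y_i)=\langle\bu_i,\bv\rangle=g$, every monomial $\Y^{\bw}$ has degree $g\cdot|\bw|$, so a binomial $\Y^{\bu}-\Y^{\bv'}$ with $\bu-\bv'\in L$, $\bu,\bv'\in P[x]^m$, is transformally homogeneous precisely when $|\bu|=|\bv'|$; and indeed this equality holds because $\langle\bu-\bv',\bv\rangle=0$ gives $g(|\bu|-|\bv'|)=0$, hence $|\bu|=|\bv'|$ since $\Z[x]$ is a domain and $g\ne 0$. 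Therefore $J_L$ is generated by transformally homogeneous binomials, so it is a homogeneous $\p$-ideal, and the standard dehomogenization/homogenization comparison (the same grading argument used in the proof of Lemma~\ref{atpv-lemma1}, now applied to the map landing in $T_{\PP^{m-1}}$) shows $\I(Y_U)=J_L$. Combined with Lemma~\ref{atpv-lemma1}, which already gives $\I(X_U)=J_L$, we get $(a)$.

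\textbf{Main obstacle.} The routine parts are the grading computations; the real work is the passage in $(b)\Rightarrow(c)$ from "$J_L$ homogeneous" to the existence of an \emph{integral polynomial} vector $\bv$ realizing a constant dot product, because $L^C$ and $\Hom_{\Z[x]}(M,\Z[x])$ need not be reflexively dual over $\Z[x]$ (only after tensoring with $\Q(x)$, cf. Lemma~\ref{atv-lemma}); one must check that the all-ones functional, being induced by homogeneity, actually lies in the image of $\Z[x]^n$ under $\theta$ of Lemma~\ref{atv-lemma} rather than merely in its saturation, and I expect that to require using the toric (saturated) hypothesis on $L$, or an explicit denominator-clearing argument showing $g$ can be normalized. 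That bookkeeping is where care is needed.
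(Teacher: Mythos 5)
Your $(a)\Leftrightarrow(b)$ and $(c)\Rightarrow(b)\Rightarrow(a)$ reasoning is fine (the grading computation is clean, though you conflate the vector $\bv$ of item (c) with the exponent vector $\bv'$ in the binomial; the corrected identity is $\sum_i(a_i-b_i)\langle\bu_i,\bv\rangle=g\sum_i(a_i-b_i)=0$, using $\ba-\bb\in L=\Syz(U)$). The genuine gap is in $(b)\Rightarrow(c)$. Your main-line argument writes $\mathbf{1}=\sum_j g_j\bv_j$ where the $\bv_j$ are columns of a matrix whose rows are the $\bu_i$, and concludes $\langle\bu_i,\mathbf{g}\rangle=1$ for all $i$, i.e.\ that $g=1$ always works. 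That is false, for two compounding reasons. First, the rows of $U$ need not generate $L^C$: they only \emph{lie} in $L^C$; Lemma~\ref{atpv-lemma2} constructs $U$ from a generating set of $L^C$ and that is a different starting point from the proposition, where $U$ is arbitrary. Second, even if $\mathbf{1}\in L^C$, there is no reason $\mathbf{1}$ lies in the $\Z[x]$-span of the rows of $U$. Concretely, take $U=\{2,2\}\subset\Z[x]^1$. Then $L=\Z[x](1,-1)$, $J_L=[y_1-y_2]$ is homogeneous, and $\mathbf{1}=(1,1)\in L^C$; but the row of $U$ is $(2,2)$, and $\mathbf{1}$ is not a $\Z[x]$-multiple of $(2,2)$. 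Any $\bv\in\Z[x]$ satisfying (c) forces $g=2\bv$, so $g=1$ is unattainable.

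The fix is exactly what you tentatively sketch in the parenthetical at the end of your $(b)\Rightarrow(c)$ step, and it is what the paper does: from $\mathbf{1}\in L^C$ one gets a functional $\varphi\in N=\Hom_{\Z[x]}(M,\Z[x])$ with $\varphi(\bu_i)=1$ for all $i$ (using $L^C\simeq N$, i.e.\ exactness of $N\to\Z[x]^m\to\Hom_{\Z[x]}(L,\Z[x])$ at the middle term applied to the presentation $0\to L\to\Z[x]^m\to M\to 0$). Lemma~\ref{atv-lemma} then supplies a nonzero $g\in\Z[x]$ and $\bv\in\Z[x]^n$ with $g\varphi=\varphi_{\bv}$, hence $\langle\bu_i,\bv\rangle=g$ for all $i$. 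The appearance of the denominator $g$ is unavoidable because the ``row functionals'' of $U$ need not span $N$ over $\Z[x]$ but only up to a factor of $\Q(x)$; this is precisely the content of Lemma~\ref{atv-lemma}. You correctly flagged this as the delicate point, but your committed argument does not resolve it — you need to incorporate Lemma~\ref{atv-lemma} into the main line rather than leaving it as an aside.
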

\begin{proof}
(a)$\Leftrightarrow$(b) is easy from the definitions.

(b)$\Rightarrow$(c). Assume $J_L$ is homogeneous and take $\Y^{\bu}-\Y^{\bv}\in J_L$ for $\bu,\bv\in P[x]^m$ and $\bu-\bv\in L$. If $\sum_{i=1}^mu_i\ne\sum_{i=1}^mv_i$, then $\Y^{\bu},\Y^{\bv}\in J_L$ which is impossible. So $\sum_{i=1}^mu_i=\sum_{i=1}^mv_i$. It follows $\bu\cdot(1,\ldots,1)=0$ for all $\bu\in L$. Now apply the functor $\Hom_{\Z[x]}(\underline{\mspace{18mu}},\Z[x])$ to (\ref{ptpv-equ}) and we obtain an exact sequence
\begin{equation}\label{ptv-eq}
N\longrightarrow\Z[x]^m\longrightarrow\Hom_{\Z[x]}(L,\Z[x])\longrightarrow0
\end{equation}
where $N:=\Hom_{\Z[x]}(M,\Z[x])$. The above argument shows that $(1,\ldots,1)\in\Z[x]^m$ maps to zero in $\Hom_{\Z[x]}(L,\Z[x])$ and hence there exists $\varphi\in N$ such that $\varphi(\bu_i)=1$ for all $i$. By Lemma \ref{atv-lemma}, there exists a vector $\bv\in\Z[x]^n$ and $g\in\Z[x]$ such that $g\varphi=\varphi_{\bv}=\langle\underline{\mspace{18mu}},\bv\rangle$. In particular, $\langle\bu_i,\bv\rangle=g$ for all $\bu_i\in U$.

(c)$\Rightarrow$(b). The vector $\bv\in\Z[x]^n$ gives $\varphi_{\bv}=\langle\underline{\mspace{18mu}},\bv\rangle\in N$ such that $\varphi_{\bv}(\bu_i)=g$ for all $i$. From (\ref{ptv-eq}), $(g,\ldots,g)$ maps to zero in $\Hom_{\Z[x]}(L,\Z[x])$. It follows that for any $\bu\in L$, $\sum_{i=1}^mgu_i=0$ and hence $\sum_{i=1}^mu_i=0$. So $J_L$ is homogeneous.
\end{proof}

Given $U=\{\bu_1,\ldots,\bu_m\}\subset \Z[x]^n$, we set
\[\Z[x]'(U)=\{\sum_{i=1}^ma_i\bu_i\mid a_i\in\Z[x],\sum_{i=1}^ma_i=0\}.\]
\begin{prop}
Let $Y_U$ be the projective toric $\p$-variety defined by $U$. Then:
\begin{enumerate}
\item[(a)] The character lattice of the $\D$-torus of $Y_U$ is $\Z[x]'(U)$.
\item[(b)] The $\sigma$-dimension of $Y_U$ is the dimension of the smallest affine subspace of $\Q(x)^m$ containing $U$. More concretely,
\begin{equation*}
\textrm{$\sigma$-dim}(Y_U)=\begin{cases}
\rank(U)-1, \textrm{ if $U$ satisfies the conditions of Proposition \ref{ptpv-prop}};\\
\rank(U), \textrm{ otherwise}.
\end{cases}
\end{equation*}
\end{enumerate}
\end{prop}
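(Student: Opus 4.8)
The plan is to identify the $\D$-torus $T_{Y_U}$ of $Y_U$ together with its character lattice, and then read the $\sigma$-dimension of $Y_U$ off the rank of that lattice.

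\textbf{Part (a).} By construction $Y_U$ is the closure in $\PP^{m-1}$ of the image of $\pi\circ\theta\colon(\A^*)^n\to T_{\PP^{m-1}}\subseteq\PP^{m-1}$, and since $T_{\PP^{m-1}}$ is open in $\PP^{m-1}$, the $\D$-torus of $Y_U$ is $T_{Y_U}=Y_U\cap T_{\PP^{m-1}}$, which is precisely the closure of $\image(\pi\circ\theta)$ taken inside $T_{\PP^{m-1}}$. Now $\pi\circ\theta$ is a $\sigma$-algebraic group homomorphism of $\D$-tori, so by Lemma \ref{tmtv-prop} the dual map $(\pi\circ\theta)^*\colon k[\mathscr{M}_{m-1}]\to k[\Z[x]^n]$ is induced by a $\Z[x]$-module homomorphism $\mathscr{M}_{m-1}\to\Z[x]^n$ of character lattices. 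I would compute this homomorphism factor by factor: $\theta$ corresponds to $\Z[x]^m\to\Z[x]^n$, $\be_i\mapsto\bu_i$, and $\pi$ corresponds, via the exact sequence of $\D$-tori recalled just before (\ref{proj-1}), to the inclusion $\mathscr{M}_{m-1}\hookrightarrow\Z[x]^m$; hence the composite sends $(a_1,\ldots,a_m)\mapsto\sum_{i=1}^m a_i\bu_i$, whose image is exactly $\Z[x]'(U)\subseteq\Z[x]^n$. Since $\Z[x]'(U)$ is a $\Z[x]$-lattice, $k[\Z[x]'(U)]$ is the $\sigma$-coordinate ring of a $\D$-torus; as the $\sigma$-coordinate ring of the closure of the image of a morphism equals the image of the dual ring map, which here is $k[\Z[x]'(U)]$, we conclude $T_{Y_U}\simeq\Spec^{\sigma}(k[\Z[x]'(U)])$, so its character lattice is $\Z[x]'(U)$.

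\textbf{Part (b).} Since $T_{Y_U}$ is open in $Y_U$ and dense (it contains $\image(\pi\circ\theta)$, which is dense in $Y_U$), the $\sigma$-dimension of $Y_U$ equals that of $T_{Y_U}$, which by part (a) is $\rank(\Z[x]'(U))$. Because $\Z[x]'(U)$ is the $\Z[x]$-span of $\{\bu_i-\bu_1\mid 2\le i\le m\}$, the space $\Span_{\Q(x)}(\Z[x]'(U))$ is the direction space of the smallest affine subspace $A=\bu_1+\Span_{\Q(x)}(\Z[x]'(U))$ containing $U$, so the $\sigma$-dimension of $Y_U$ equals $\dim A$, proving the first assertion. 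For the explicit formula I would compare $\rank(\Z[x]'(U))$ with $\rank(U)=\dim_{\Q(x)}\Span_{\Q(x)}(U)$: since $\Span_{\Q(x)}(U)=\Q(x)\bu_1+\Span_{\Q(x)}(\Z[x]'(U))$, we have $\rank(U)=\rank(\Z[x]'(U))$ when $\bu_1\in\Span_{\Q(x)}(\Z[x]'(U))$ and $\rank(U)=\rank(\Z[x]'(U))+1$ otherwise. It then remains to check that $U$ satisfies the conditions of Proposition \ref{ptpv-prop} if and only if $\bu_1\notin\Span_{\Q(x)}(\Z[x]'(U))$. If $U$ satisfies those conditions, condition (c) gives $\bv\in\Z[x]^n$ and $0\ne g\in\Z[x]$ with $\langle\bu_i,\bv\rangle=g$ for all $i$; then $\bu\mapsto\langle\bu,\bv\rangle$ kills $\Z[x]'(U)$ but sends $\bu_1$ to $g\ne0$, so $\bu_1\notin\Span_{\Q(x)}(\Z[x]'(U))$. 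Conversely, if $\bu_1\notin\Span_{\Q(x)}(\Z[x]'(U))$, then $\Span_{\Q(x)}(U)=\Q(x)\bu_1\oplus\Span_{\Q(x)}(\Z[x]'(U))$, and the $\Q(x)$-linear functional equal to $1$ on $\bu_1$ and $0$ on the second summand sends every $\bu_i$ to $1$; extending it to $\Q(x)^n$, writing it as $\bu\mapsto\langle\bu,\bw\rangle$ and clearing denominators yields $\bv\in\Z[x]^n$, $0\ne g\in\Z[x]$ with $\langle\bu_i,\bv\rangle=g$ for all $i$, i.e.\ condition (c). Hence the $\sigma$-dimension of $Y_U$ is $\rank(U)-1$ when $U$ satisfies the conditions of Proposition \ref{ptpv-prop}, and $\rank(U)$ otherwise.

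\textbf{Expected main obstacle.} The rank bookkeeping in part (b) is routine. The delicate step is part (a): keeping straight which $\D$-torus maps to which, verifying that $\pi^*$ is the inclusion $\mathscr{M}_{m-1}\hookrightarrow\Z[x]^m$, and carefully justifying that $T_{Y_U}$ coincides with the closure of the parametrizing image and that its $\sigma$-coordinate ring is $\image\big((\pi\circ\theta)^*\big)=k[\Z[x]'(U)]$ — in particular that $\Z[x]'(U)$ really is a $\Z[x]$-lattice, so that $\Spec^{\sigma}(k[\Z[x]'(U)])$ is a $\D$-torus. A secondary point is the nonvanishing of $g$ in condition (c) of Proposition \ref{ptpv-prop}, which is exactly what separates the $\rank(U)$ case from the $\rank(U)-1$ case.
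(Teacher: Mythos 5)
Your proposal is correct, and it is considerably more detailed than what the paper actually writes down. For part (a) the paper's proof is a one-line pointer ("the proof is similar to Proposition 2.1.6(a) in Cox--Little--Schenck"); you instead give a self-contained argument inside the paper's own framework, invoking Lemma~\ref{tmtv-prop} to translate the $\sigma$-algebraic group homomorphism $\pi\circ\theta$ into a $\Z[x]$-module map on character lattices and then computing its image to be $\Z[x]'(U)$. That is a genuinely different route and is arguably preferable, since the paper never spells out how the classical torus argument transfers to the $\p$-setting; your version makes the transfer explicit via the lemma that the paper already has. One small point worth making precise: the identification of $T_{Y_U}$ with the closure of $\image(\pi\circ\theta)$ inside $T_{\PP^{m-1}}$ relies on the standard fact that taking closure commutes with restricting to an open subset, and the claim that the closure of the image of a $\D$-torus homomorphism has coordinate ring equal to $\image$ of the dual map is exactly the statement that $\Z[x]'(U)$ (being torsion-free, hence a $\Z[x]$-lattice) yields a $\D$-torus $\Spec^{\sigma}(k[\Z[x]'(U)])$; you flag both of these and they hold.

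For part (b) the paper's argument is just ``$\Z[x]'(U)=\Z[x](U')$ where $U'=\{\bu_i-\bu_1\}$, so $\rank(\Z[x]'(U))=\rank(U')$, and the conclusions follow.'' You take the same computation but actually carry it to the end: you show $\rank(\Z[x]'(U))=\rank(U)-1$ or $\rank(U)$ according to whether $\bu_1$ does or does not lie in $\Span_{\Q(x)}(\Z[x]'(U))$, and then prove that this dichotomy coincides with condition (c) of Proposition~\ref{ptpv-prop}, using Lemma~\ref{atv-lemma} in one direction. You also correctly observe that condition (c) must be read with $g\neq 0$ for the dichotomy (and indeed the equivalence in Proposition~\ref{ptpv-prop} itself) to be meaningful; this is implicit in the paper but not stated. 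The only thing I would add is the harmless remark that the paper's statement writes $\Q(x)^m$ where it clearly means $\Q(x)^n$ (the ambient space of $U$), as your phrasing already implicitly corrects.
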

\begin{proof}
(a) The proof is similar to Proposition 2.1.6(a) in \cite[p.58]{cox-2010}.\\
(b) By (a), the $\sigma$-dimension of $Y_U$ equals to the rank of $\Z[x]'(U)$. Let $U'=\{\bu_2-\bu_1,\ldots,\bu_m-\bu_1\}$. It is easy to check that $\Z[x]'(U)=\Z[x](U')$. So $\rank(\Z[x]'(U))=\rank(U')$ and the conclusions of (b) then follow.
\end{proof}

In the following we show that a projective toric $P[\sigma]$-variety is actually covered by a series of affine toric $P[\sigma]$-varieties. Let $O_i=\PP^{m-1}\backslash \V(y_i)$ which is an affine open subset containing the $\D$-torus $T_{\PP^{m-1}}$. We have
\[T_{Y_U}=Y_U\cap T_{\PP^{m-1}}\subseteq Y_U\cap O_i.\]
Since $Y_U$ is the closure of $T_{Y_U}$ in $\PP^{m-1}$, it follows that $Y_U\cap O_i$ is the closure of $T_{Y_U}$ in $O_i\simeq \A^{m-1}$. Thus $Y_U\cap O_i$ is an affine toric $P[\sigma]$-variety. We will determinate the affine $P[x]$-semimodule associated with $Y_U\cap O_i$.
$O_i\simeq \A^{m-1}$ is given by
\[(a_1,\ldots,a_m)\mapsto (a_1/a_i,\ldots,a_{i-1}/a_i,a_{i+1}/a_i,\ldots,a_m/a_i).\]
Combining this with the map (\ref{proj-1}), we see that $Y_U\cap O_i$ is the closure of the image of the map $(\A^*)^m\rightarrow \A^{m-1}$ given by
\begin{equation}\label{proj-2}
\T\mapsto (\T^{\bu_1-\bu_i},\ldots,\T^{
\bu_{i-1}-\bu_i},\T^{\bu_{i+1}-\bu_i},\ldots,\T^{\bu_m-\bu_i}).
\end{equation}
If we set $U_i=U-\bu_i=\{\bu_j-\bu_i\mid j\neq i\}$ and $S_i=P[x](U_i)$, it follows that
\[Y_U\cap O_i=X_{U_i}=\Spec^{P[\sigma]}(k[S_i]).\]
So we have the following proposition:
\begin{prop}
Let $Y_U\subseteq \PP^{m-1}$ for $U=\{\bu_1,\ldots,\bu_m\}\subset \Z[x]^n$. Then the affine piece $Y_U\cap O_i$ is the affine toric $P[\sigma]$-variety
\[Y_U\cap O_i=X_{U_i}=\Spec^{P[\sigma]}(k[S_i]),\]
where $U_i=U-\bu_i=\{\bu_j-\bu_i\mid j\neq i\}$ and $S_i=P[x](U_i)$, $i=1,\ldots,m$.
\end{prop}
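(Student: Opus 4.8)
The statement is essentially a summary of the discussion immediately preceding it, so the plan is to assemble that discussion into a self-contained proof. First I would recall that $O_i=\PP^{m-1}\backslash\V(y_i)$ is an affine open $\p$-subvariety of $\PP^{m-1}$ which is isomorphic to $\A^{m-1}$ via the dehomogenization $(a_1,\ldots,a_m)\mapsto(a_1/a_i,\ldots,a_{i-1}/a_i,a_{i+1}/a_i,\ldots,a_m/a_i)$, and that it contains the $\D$-torus $T_{\PP^{m-1}}\simeq(\A^*)^{m-1}$.

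Next I would exploit the inclusion chain $T_{Y_U}=Y_U\cap T_{\PP^{m-1}}\subseteq Y_U\cap O_i\subseteq Y_U$. Since $Y_U$ is by Definition \ref{tpv-def1} the closure of $T_{Y_U}$ in $\PP^{m-1}$, and since for an open subset $O_i$ the closure of a set inside $O_i$ is the intersection of its closure in $\PP^{m-1}$ with $O_i$, it follows that $Y_U\cap O_i$ is precisely the closure of (the image of) $T_{Y_U}$ inside $O_i\simeq\A^{m-1}$. In particular $Y_U\cap O_i$ is an affine toric $\p$-variety, and it remains only to identify the defining data.

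For that I would compute the parameterizing map explicitly: composing the map $\pi\circ\theta$ of (\ref{proj-1}) with the dehomogenization isomorphism $O_i\simeq\A^{m-1}$ produces exactly the morphism (\ref{proj-2}), namely $\T\mapsto(\T^{\bu_1-\bu_i},\ldots,\T^{\bu_{i-1}-\bu_i},\T^{\bu_{i+1}-\bu_i},\ldots,\T^{\bu_m-\bu_i})$. By the definition of the affine toric $\p$-variety attached to a finite set of lattice vectors, this is the defining map of $X_{U_i}$ for $U_i=U-\bu_i=\{\bu_j-\bu_i\mid j\neq i\}$, so $Y_U\cap O_i=X_{U_i}$. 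Finally, Theorem \ref{aspv-thm3} gives $X_{U_i}\simeq\Spec^{\p}(k[P[x](U_i)])=\Spec^{\p}(k[S_i])$, completing the proof.

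I expect the only genuinely delicate point to be the topological claim that taking closures commutes with restriction to the open set $O_i$ in the Cohn-type topology on $\Proj^{\p}$ of Definition \ref{ptv-def}: one must make sure the identification $O_i\simeq\A^{m-1}$ is an isomorphism of topological spaces (equivalently, of $\p$-varieties), so that the closure of $T_{Y_U}$ computed in $O_i$ really does coincide with $\overline{T_{Y_U}}\cap O_i$ computed in $\PP^{m-1}$. Everything else is a direct unwinding of the definitions of $\pi$, $\theta$, and the toric construction.
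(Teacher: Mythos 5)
Your proposal is correct and reproduces essentially the same argument the paper gives in the discussion immediately preceding the proposition: the inclusion $T_{Y_U}\subseteq Y_U\cap O_i$, the observation that $Y_U\cap O_i$ is the closure of $T_{Y_U}$ inside $O_i\simeq\A^{m-1}$, the explicit computation that the dehomogenized parameterization is $(\ref{proj-2})$, and the identification via Theorem~\ref{aspv-thm3}. Your closing remark correctly flags the one point the paper leaves implicit, namely that $O_i\simeq\A^{m-1}$ as $\p$-varieties so the closure in the open piece agrees with $\overline{T_{Y_U}}\cap O_i$.
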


Besides describing the affine pieces $Y_U\cap O_i$ of $Y_U\subseteq \PP^{m-1}$, we can also describe how they patch together. When $i\neq j$, $Y_U\cap O_i\cap O_j$ consists of all points of $Y_U\cap O_i$ where $y_j/y_i\neq 0$. By (\ref{proj-2}), this means those points where $\chi^{\bu_j-\bu_i}\neq 0$. Thus
\begin{align*}
Y_U\cap O_i\cap O_j&=\Spec^{P[\sigma]}(k[S_i])_{\chi^{\bu_j-\bu_i}}=\Spec^{P[\sigma]}(k[S_i]_{\chi^{\bu_j-\bu_i}})\\
&=\Spec^{P[\sigma]}(k[S_i+\Z[x](\bu_i-\bu_j)])\subseteq Y_U\cap O_i.
\end{align*}
Also,
\begin{align*}
Y_U\cap O_i\cap O_j&=\Spec^{P[\sigma]}(k[S_j])_{\chi^{\bu_i-\bu_j}}=\Spec^{P[\sigma]}(k[S_j]_{\chi^{\bu_i-\bu_j}})\\
&=\Spec^{P[\sigma]}(k[S_j+\Z[x](\bu_j-\bu_i)])\subseteq Y_U\cap O_j.
\end{align*}
\begin{remark}
One can check that $S_i+\Z[x](\bu_i-\bu_j)=S_j+\Z[x](\bu_j-\bu_i)$.
\end{remark}

\section{Abstract Toric $P[\sigma]$-Varieties}
In this section, we will define abstract toric $P[\sigma]$-varieties through gluing affine toric $P[\sigma]$-varieties along open subsets and generalize the irreducible invariant $\p$-subvarieties-faces correspondence to abstract toric $P[\sigma]$-varieties.
\subsection{Gluing Together Affine $P[\sigma]$-Varieties}
Suppose that we have a finite collection $\{V_{\alpha}\}_{\alpha}$ of affine $P[\sigma]$-varieties and for all pairs $\alpha,\beta$ we have open subsets $V_{\beta\alpha}\subseteq V_{\alpha}$ and isomorphisms $g_{\beta\alpha}\colon V_{\beta\alpha}\simeq V_{\alpha\beta}$ satisfying the following compatibility conditions:
\begin{itemize}
\item $g_{\alpha\beta}=g^{-1}_{\beta\alpha}$ for all $\alpha,\beta$;
\item $g_{\beta\alpha}(V_{\beta\alpha}\cap V_{\gamma\alpha})=V_{\alpha\beta}\cap V_{\gamma\beta}$ and $g_{\gamma\beta}\circ g_{\beta\alpha}=g_{\gamma\alpha}$ on $V_{\beta\alpha}\cap V_{\gamma\alpha}$ for all $\alpha,\beta,\gamma$.
\end{itemize}
Now we can glue together $\{V_{\alpha}\}_{\alpha}$ along open subsets $V_{\alpha\beta}$ through isomorphisms $g_{\beta\alpha}$, and denote it by $X$.
\begin{definition}
The above $X$ is called an {\em abstract $P[\sigma]$-variety}. Its open sets are those subsets that restrict to open subsets in each $V_{\alpha}$. Its closed sets are called {\em subvarieties} of $X$. We say that $X$ is {\em irreducible} if it is not the union of two proper subvarieties.
\end{definition}

\subsection{The Toric $P[\sigma]$-Variety of a Fan}
Now we give the definition of abstract toric $P[\sigma]$-varieties.
\begin{definition}\label{abs-2}
An {\em (abstract) toric $P[\sigma]$-variety} is an irreducible abstract $P[\sigma]$-variety $X$ containing a $\D$-torus $T$ as an open subset such that the action of $T$ on itself extends to a $\D$-algebraic group action of $T$ on $X$.
\end{definition}

It is clear that both affine toric $P[\sigma]$-varieties and projective toric $P[\sigma]$-varieties we have defined in the  previous sections are abstract toric $P[\sigma]$-varieties.

We will construct abstract toric $P[\sigma]$-varieties from fans. First we give the definition of a fan.
\begin{definition}\label{abs-1}
Let $\{S_i\}_i$ be a finite collection of affine $P[x]$-semimodules in $\Z[x]^n$. We say that $\{S_i\}$ is {\em compatible} if it satisfies:
\begin{enumerate}
\item[(a)] $S_i^{md}=M$ for all $i$ and for some $\Z[x]$-lattice $M$;
\item[(b)] for all pairs $(i,j)$ such that $i\neq j$, there exists $\bu\in S_i$ such that $-\bu\in S_j$ and $S_i+\Z[x](-\bu)=S_j+\Z[x](\bu)$;
\item[(c)] for all triples $(i,j,k)$ such that $i\neq j, j\neq k, k\neq i$, by (b), there exist $\bu\in S_i, \bv\in S_j, \bw\in S_k$ such that $S_i+\Z[x](-\bu)=S_j+\Z[x](\bu), S_j+\Z[x](-\bv)=S_k+\Z[x](\bv), S_k+\Z[x](-\bw)=S_i+\Z[x](\bw)$. For such $\bu,\bv,\bw$, $S_i+\Z[x](-\bu)+\Z[x](\bw)=S_j+\Z[x](\bu)+\Z[x](-\bv)=S_k+\Z[x](\bv)+\Z[x](-\bw)$.
\end{enumerate}
\end{definition}
\begin{definition}
A {\em fan} $\Sigma$ is a finite collection of affine $P[\sigma]$-semimodules $\{S_i\}_i$ which is compatible. If $\Sigma$ is a fan, we will denote $\Sigma^{md}=S_i^{md}$ and define $\rank(\Sigma)=\rank(\Sigma^{md})$.
\end{definition}
\begin{example}\label{atpv-exam}
Let $U=\{\bu_1,\ldots,\bu_m\}\subset \Z[x]^n$ and $S_i=\Z[x](U-\bu_i), i=1,\ldots,m$. One can check that $\Sigma=\{S_i\}_{i=1}^m$ satisfies the above compatible conditions and thus is a fan.
\end{example}

We now show how we can construct an abstract toric $P[\sigma]$-variety from a fan. Let $\Sigma$ be a fan. By Theorem \ref{aspv-thm3}, each $S_i$ in $\Sigma$ gives an affine toric $P[\sigma]$-variety $X_i=\Spec^{P[\sigma]}(S_i)$. Let $S_i$ and $S_j$ be two different affine $P[\sigma]$-semimodules in $\Sigma$, then by Definition \ref{abs-1}(b), there exists $\bu\in \Sigma^{md}$, such that $k[S_i]_{\chi^{\bu}}=k[S_j]_{\chi^{-\bu}}$, so we have an isomorphism
\[g_{ji}\colon (X_i)_{\chi^{\bu}}\simeq (X_j)_{\chi^{-\bu}}\]
which is the identity map. For any distinct $i,j,k$, there exist $\bu,\bv,\bw\in \Sigma^{md}$, such that
\begin{align*}
(X_i)_{\chi^{\bu}}\cap (X_i)_{\chi^{-\bw}}&=\Spec^{P[\sigma]}(S_i+\Z[x](-\bu)+\Z[x](\bw)),\\
(X_j)_{\chi^{-\bu}}\cap (X_j)_{\chi^{\bv}}&=\Spec^{P[\sigma]}(S_j+\Z[x](\bu)+\Z[x](-\bv)),\\
(X_k)_{\chi^{-\bv}}\cap (X_k)_{\chi^{\bw}}&=\Spec^{P[\sigma]}(S_k+\Z[x](\bv)+\Z[x](-\bw)).
\end{align*}
Then by Definition \ref{abs-1}(c),
$$(X_i)_{\chi^{\bu}}\cap (X_i)_{\chi^{-\bw}}=(X_j)_{\chi^{-\bu}}\cap (X_j)_{\chi^{\bv}}=(X_k)_{\chi^{-\bv}}\cap (X_k)_{\chi^{\bw}}.$$
So the compatibility conditions for gluing the affine toric $P[\sigma]$-varieties $X_i$ along the open subsets $(X_i)_{\chi^{\bu}}$ are satisfied. Hence we obtain an abstract $P[\sigma]$-variety $X_{\Sigma}$ associated with the fan $\Sigma$.
\begin{theorem}
Let $\Sigma=\{S_i\}_i$ be a fan in $\Z[x]^n$. Then the abstract $P[\sigma]$-variety $X_{\Sigma}$ constructed above is a toric $P[\sigma]$-variety.
\end{theorem}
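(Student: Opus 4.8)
The plan is to verify directly the three conditions in Definition~\ref{abs-2}: that $X_{\Sigma}$ is an irreducible abstract $\p$-variety, that it contains a $\D$-torus $T$ as an open subset, and that the action of $T$ on itself extends to a $\D$-algebraic group action of $T$ on $X_{\Sigma}$. Throughout I would set $M=\Sigma^{md}$ and $T=\Spec^{\p}(k[M])$, which is a $\D$-torus by Proposition~\ref{atpv-prop3}.

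First I would identify $T$ as an open subvariety of $X_{\Sigma}$ and prove irreducibility. For each $i$, the inclusion $S_i\hookrightarrow S_i^{md}=M$ exhibits $T$ as an open subset of $X_i=\Spec^{\p}(k[S_i])$, and since $\chi^{\bu}$ is a unit of $k[M]$ for every $\bu\in M$, we have $T\subseteq(X_i)_{\chi^{\bu}}$ for all the $\bu\in M$ occurring in the gluing. Each gluing isomorphism $g_{ji}\colon(X_i)_{\chi^{\bu}}\simeq(X_j)_{\chi^{-\bu}}$ is induced by the identification $k[S_i]_{\chi^{\bu}}=k[S_j]_{\chi^{-\bu}}$ of subrings of $k[M]$, hence restricts to the identity on $T$; therefore the copies of $T$ in the charts $X_i$ glue compatibly to a single subset $T\subseteq X_{\Sigma}$ with $T\subseteq X_i$ for all $i$, and, being open in every chart, $T$ is open in $X_{\Sigma}$. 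Moreover each $k[S_i]$ is a $\D$-domain, so each $X_i$ is irreducible and $T$, a nonempty open subset, is dense in $X_i$; as the $X_i$ cover $X_{\Sigma}$, $T$ is dense in $X_{\Sigma}$, and $T$ is itself irreducible since $k[M]$ is a domain. If $X_{\Sigma}=Z_1\cup Z_2$ with $Z_1,Z_2$ proper subvarieties, then $T=(T\cap Z_1)\cup(T\cap Z_2)$ and irreducibility of $T$ gives, say, $T\subseteq Z_1$, whence $X_{\Sigma}=\overline{T}\subseteq Z_1$, a contradiction; so $X_{\Sigma}$ is irreducible.

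Next I would construct the torus action. Each chart carries the $\D$-algebraic group action $\mu_i\colon T\times X_i\to X_i$ dual to the $k$-$\sigma$-algebra homomorphism $k[S_i]\to k[M]\otimes_k k[S_i]$, $\chi^{\bu}\mapsto\chi^{\bu}\otimes\chi^{\bu}$, attached to every affine toric $\p$-variety. On an overlap $(X_i)_{\chi^{\bu}}=(X_j)_{\chi^{-\bu}}$, whose coordinate ring is $k[S_i+\Z[x](-\bu)]=k[S_j+\Z[x](\bu)]$, the restriction of $\mu_i$ is the morphism dual to $\chi^{\bv}\mapsto\chi^{\bv}\otimes\chi^{\bv}$ on this localized ring — and a short check on $K$-points shows it maps $T\times(X_i)_{\chi^{\bu}}$ into $(X_i)_{\chi^{\bu}}$, since the value at $\chi^{\bu}$ becomes a product of two nonzero elements — and this is verbatim the restriction of $\mu_j$. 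Hence the $\mu_i$ agree on overlaps, with the triple-overlap consistency supplied by Definition~\ref{abs-1}(c), so they glue to a morphism $\mu\colon T\times X_{\Sigma}\to X_{\Sigma}$. Its restriction to $T\times T$ is dual to $k[M]\to k[M]\otimes_k k[M]$, $\chi^{\bu}\mapsto\chi^{\bu}\otimes\chi^{\bu}$, which is the group law of $T$; so $\mu$ is a $\D$-algebraic group action extending the self-action of $T$, and all of Definition~\ref{abs-2} is satisfied.

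The main obstacle is less a single computation than the bookkeeping that makes the last step rigorous inside the category of abstract $\p$-varieties: one has to form $T\times X_{\Sigma}$ as an abstract $\p$-variety by gluing the affine pieces $T\times X_i$, check that $\mu$ really is a morphism of abstract $\p$-varieties (a morphism on each chart, compatible with the gluing data), and verify that the local actions indeed carry the distinguished open subsets of the gluing into themselves. All of this reduces to the fact that every structure map in sight is the diagonal map $\chi^{\bu}\mapsto\chi^{\bu}\otimes\chi^{\bu}$, together with the identities in Definition~\ref{abs-1}(c); carrying it out carefully, rather than any new idea, is where the effort goes.
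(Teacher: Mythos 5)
Your proposal is correct and follows essentially the same route as the paper's proof: identify $T=\Spec^{\p}(k[M])$, observe that the gluing maps are identities so the copies of $T$ and the chart-wise actions $\mu_i$ patch, and conclude irreducibility from the affine charts. The one place you add genuine value is the irreducibility step: the paper asserts it tersely from the irreducibility of the $X_i$, whereas you correctly pass through the density of $T$ in every chart (and hence in $X_{\Sigma}$) and then use the irreducibility of $T$ itself — this is the honest argument that the paper's one-line claim is implicitly relying on, and your version also makes explicit the check that the local actions preserve the distinguished opens of the gluing.
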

\begin{proof}
Let $M=\Sigma^{md}$ and $T_{\Sigma}=\Spec^{P[\sigma]}(k[M])$. Then $T_{\Sigma}\subseteq X_i$ as a $\D$-torus for all $i$. These $\D$-tori are all identified by the gluing, so $T_{\Sigma}\subseteq X_{\Sigma} $ as an open subset of $X_{\Sigma}$. For each $i$, $T_{\Sigma}$ has an action on $X_i$. The gluing isomorphisms $g_{ji}$ are identity maps on each $X_i\cap X_j$, so the actions are compatible on each $X_i\cap X_j$, and patch together to give a $\D$-algebraic group action of $T_{\Sigma}$ on $X_{\Sigma}$. $X_{\Sigma}$ is irreducible since all $X_i$ are irreducible affine toric $P[\sigma]$-varieties. So by Definition \ref{abs-2}, $X_{\Sigma}$ is a toric $P[\sigma]$-variety.
\end{proof}
\begin{example}\label{atv-ex1}
Let $U=\{\bu_1,\ldots,\bu_m\}\subset \Z[x]^n$ and $S_i=\Z[x](U-\bu_i), i=1,\ldots,m$. In Example \ref{atpv-exam}, we see that $\Sigma=\{S_i\}_{i=1}^m$ is a fan. So the projective toric $\p$-variety $Y_U$ defined by $U$ is an abstract toric $\p$-variety associated with the fan $\Sigma$.
\end{example}

The irreducible invariant $\p$-subvarieties-faces correspondence (Theorem \ref{oapv-thm1}) still applies to abstract toric $P[\sigma]$-varieties constructed from fans through considering the gluing.

Suppose $\Sigma=\{S_i\}_i$ is a fan in $\Z[x]^n$. Let
\[F_{\Sigma}:=\{F\mid F\preceq S_i, S_i\in \Sigma\}\]
be the set of faces of affine $\p$-semimodules in $\Sigma$ and
\[F_{\Sigma}(r):=\{F\in F_{\Sigma}\mid \rank(F)=\rank(\Sigma)-r\}.\]

Define an equivalence relationship in $F_{\Sigma}$ as follows:
for $F_i\preceq S_i,F_j\preceq S_j$ and $S_i+\Z[x](-\bu)=S_j+\Z[x](\bu)$, $F_i\sim F_j$ if and only if there exists a face $F$ of $S_i+\Z[x](-\bu)$ such that $F_i^{md}=F_j^{md}=F^{md}$.

Let
$L_{\Sigma}:=F_{\Sigma}/\sim$
and
$L_{\Sigma}(r):=F_{\Sigma}(r)/\sim$. For $L',L\in L_{\Sigma}$, $L'\preceq L$ means there is a representative $F'$ of $L'$ and a representative $F$ of $L$ such that $F'\preceq F$.

First, let us prove some lemmas.
\begin{lemma}\label{tvf-lemma}
Let $S$ be an affine $P[x]$-semimodule and F a face of $S$. Then for any $\bu\in F^{md}$, $F+\Z[x](\bu)$ is a face of $S+\Z[x](\bu)$.
\end{lemma}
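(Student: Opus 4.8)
The plan is to verify directly the two defining conditions of a face for $G:=F+\Z[x](\bu)$ inside $T:=S+\Z[x](\bu)$, reducing everything to the corresponding conditions for $F\preceq S$. First I would record the structural facts. Since $\bu\in F^{md}$, fix $\ba,\bb\in F$ with $\bu=\ba-\bb$. Using $g\bu=g^{+}\bu+g^{-}(-\bu)$ for $g\in\Z[x]$ (with $g^{\pm}\in\N[x]$), one has $\Z[x](\bu)=P[x]\bu+P[x](-\bu)$, so $G$ and $T$ are affine $P[x]$-semimodules, namely $G=P[x](F\cup\{\bu,-\bu\})$ and $T=P[x](S\cup\{\bu,-\bu\})$, with $G\subseteq T$; moreover $G+\Z[x](\bu)=G$ and $T+\Z[x](\bu)=T$. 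The key reduction is: every $\bp\in T$ has the form $\bp=\bs+h\bu$ with $\bs\in S$ and $h\in\Z[x]$, and $\bp\in G$ if and only if $\bs\in G$ (because $G$ absorbs $\Z[x](\bu)$). Hence it suffices to test the face conditions on elements of $S$.

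The second ingredient is the elementary identity: for $\bw\in F$ and $g\in\Z[x]$, writing $g=g^{+}-g^{-}$ with $g^{+},g^{-}\in\N[x]\subseteq P[x]$,
\[
\bw+g\bu+(g^{+}\bb+g^{-}\ba)=\bw+g^{+}\ba+g^{-}\bb ,
\]
where the right-hand side lies in $F$ (as $\bw,\ba,\bb\in F$ and $g^{\pm}\in P[x]$) and $\be:=g^{+}\bb+g^{-}\ba\in F\subseteq S$. In words, any element of $G$ becomes an element of $F$ after adding a suitable element of $F$.

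For face condition (1), suppose $\bp_{1},\bp_{2}\in T$ with $\bp_{1}+\bp_{2}\in G$; writing $\bp_{i}=\bs_{i}+h_{i}\bu$ with $\bs_{i}\in S$, by the reduction it suffices to show $\bs_{1},\bs_{2}\in G$. Since $\bs_{1}+\bs_{2}=(\bp_{1}+\bp_{2})-(h_{1}+h_{2})\bu\in G$, write $\bs_{1}+\bs_{2}=\bw+g\bu$ with $\bw\in F$; the identity gives $\bs_{1}+\bs_{2}+\be\in F$ with $\be\in F\subseteq S$. Grouping this as $\bs_{1}+(\bs_{2}+\be)$ and as $(\bs_{1}+\be)+\bs_{2}$, in each grouping both summands lie in $S$, so condition (1) for $F\preceq S$ yields $\bs_{1}\in F$ and $\bs_{2}\in F$, hence $\bs_{1},\bs_{2}\in F\subseteq G$. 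For face condition (2), let $g\in P[x]^{*}$ and $\bp\in T$ with $g\bp\in G$; writing $\bp=\bs+h\bu$ with $\bs\in S$, it suffices to show $\bs\in G$. Now $g\bs=g\bp-gh\bu\in G$ and $g\bs\in S$, so writing $g\bs=\bw+c\bu$ with $\bw\in F$, the identity gives $g\bs+\be\in F$ with $\be\in F\subseteq S$; grouping as $(g\bs)+\be$ with both terms in $S$, condition (1) for $F\preceq S$ forces $g\bs\in F$, and then condition (2) for $F\preceq S$ (applied to $g\in P[x]^{*}$, $\bs\in S$) gives $\bs\in F\subseteq G$. Together with $G\subseteq T$ being a $P[x]$-subsemimodule, this shows $G\preceq T$.

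I expect the only genuinely delicate point to be the "$G$ absorbs $\Z[x](\bu)$'' reduction, which is what makes it legitimate to test both face axioms only on elements of $S$; the rest is bookkeeping with $g^{\pm}$ and the inclusion $\N[x]\subseteq P[x]$. Everything else is a routine application of the face axioms for $F\preceq S$ established via the displayed identity.
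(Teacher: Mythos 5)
Your proof is correct and takes essentially the same approach as the paper's: decompose elements of $S+\Z[x](\bu)$ into an $S$-part plus a $\Z[x](\bu)$-part, use $\bu\in F^{md}$ to rewrite the $\Z[x](\bu)$-multiple as a difference $\bd-\be$ of elements of $F$, add the right $F$-element to land in $F$, and invoke the face axioms for $F\preceq S$. The paper writes $(g_1+g_2-g_3)\bu=\bd-\be$ abstractly rather than fixing $\bu=\ba-\bb$ up front, but your explicit "reduction to $S$" and the $g^{\pm}$ identity encode exactly the same computation.
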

\begin{proof}
Suppose $\ba,\bb\in S+\Z[x](\bu)$ such that $\ba+\bb\in F+\Z[x](\bu)$. Write $\ba=\ba'+g_1\bu$, $\bb=\bb'+g_2\bu$, and $\ba+\bb=\bc+g_3\bu$, where $\ba',\bb'\in S, \bc\in F, g_1,g_2,g_3\in \Z[x]$. Then $\ba+\bb=\ba'+\bb'+(g_1+g_2)\bu=\bc+g_3\bu$. So $\ba'+\bb'+(g_1+g_2-g_3)\bu=\bc$. Since $(g_1+g_2-g_3)\bu\in F^{md}$, we can write $(g_1+g_2-g_3)\bu=\bd-\be, \bd,\be\in F$. So $\ba'+\bb'+\bd=\bc+\be\in F$. It follows $\ba',\bb'\in F$. Thus $\ba,\bb\in F+\Z[x](\bu)$. Suppose $\ba\in S+\Z[x](\bu)$, $g\in P[x]^*$ such that $g\ba\in F+\Z[x](\bu)$. Write $\ba=\ba'+g_1\bu$, and $g\ba=\bc+g_2\bu$, where $\ba'\in S, \bc\in F, g_1,g_2\in \Z[x]$. Then $g\ba=g\ba'+gg_1\bu=\bc+g_2\bu$. So $g\ba'+(gg_1-g_2)\bu=\bc$. Since $(gg_1-g_2)\bu\in F^{md}$, we can write $(gg_1-g_2)\bu=\bd-\be, \bd,\be\in F$. So $g\ba'+\bd=\bc+\be\in F$. It follows $\ba'\in F$. Thus $\ba\in F+\Z[x](\bu)$. Hence $F+\Z[x](\bu)$ is a face of $S+\Z[x](\bu)$.
\end{proof}
\begin{remark}
From the above lemma, we see that if $\Sigma=\{S_i\}_i$ is a fan and $S_i+\Z[x](-\bu)=S_j+\Z[x](\bu)$, then for $F_i\preceq S_i, F_j\preceq S_j$, $F_i\sim F_j$ if and only if $\bu\in F_i^{md}=F_j^{md}$.
\end{remark}
\begin{lemma}\label{atv-lm1}
Let $F$ be a face of an affine $P[x]$-semimodule $S$. Then $F=S\cap F^{md}$.
\end{lemma}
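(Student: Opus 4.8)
The plan is to prove the two inclusions $F\subseteq S\cap F^{md}$ and $S\cap F^{md}\subseteq F$ separately, the first being essentially a matter of unwinding definitions and the second being a one-line application of the defining property of a face.

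For the inclusion $F\subseteq S\cap F^{md}$: by definition a face is a $P[x]$-subsemimodule contained in $S$, so $F\subseteq S$ is automatic. Moreover, since $0\in P[x]$ and $F$ is a $P[x]$-semimodule, for any $\bu\in F$ we have $0=0\cdot\bu\in F$, so $\mathbf{0}\in F$; hence every $\bu\in F$ can be written $\bu=\bu-\mathbf{0}$ with $\bu,\mathbf{0}\in F$, which shows $F\subseteq F^{md}$. (Here I am using that $F$ is itself an affine $P[x]$-semimodule, as already observed after the definition of face, so that $F^{md}=\{\bu-\bv\mid\bu,\bv\in F\}$ is meaningful.) Combining the two gives $F\subseteq S\cap F^{md}$.

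For the reverse inclusion $S\cap F^{md}\subseteq F$: let $\bw\in S$ with $\bw\in F^{md}$. Write $\bw=\ba-\bb$ with $\ba,\bb\in F$. Then $\bw+\bb=\ba\in F$, while $\bw\in S$ and $\bb\in F\subseteq S$, so $\bw$ and $\bb$ are elements of $S$ whose sum lies in $F$. By property (1) in the definition of a face, this forces $\bw\in F$. Hence $S\cap F^{md}\subseteq F$, and together with the first inclusion we conclude $F=S\cap F^{md}$.

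I do not expect any real obstacle here: the statement is a direct consequence of the face axioms, and the only mild point to be careful about is recording that $\mathbf 0\in F$ (so that $F^{md}$ contains $F$) and that $F$ qualifies as an affine $P[x]$-semimodule so that $F^{md}$ is defined; property (2) of a face is not even needed.
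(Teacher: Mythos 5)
Your proof is correct, and it is genuinely cleaner than the argument the paper gives. The paper's proof fixes generators $S=P[x](\{\bu_1,\ldots,\bu_m\})$ and $F=P[x](\{\bu_1,\ldots,\bu_r\})$, writes $\bw\in S\cap F^{md}$ in two ways, moves the negative parts $(f_i)_-$ to the other side of the equation, and then uses \emph{both} face axioms together with the (implicit) fact that $\bu_{r+1},\ldots,\bu_m\notin F$ to force $g_{r+1}=\cdots=g_m=0$. Your argument avoids all of that: you use directly that $F^{md}=\{\ba-\bb\mid\ba,\bb\in F\}$ (which is how the paper defines $\cdot^{md}$), observe $\bw+\bb=\ba\in F$ with $\bw,\bb\in S$, and conclude from axiom (1) alone. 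This is shorter, requires no choice of generators, does not invoke axiom (2), and does not rely on any minimality assumption on the generating sets. The one small thing worth keeping in mind is the point you yourself flag: the set $\{\ba-\bb\mid\ba,\bb\in F\}$ really is the $\Z[x]$-module $F^{md}$ generated by $F$, which holds because $F$ is a $P[x]$-semimodule and every $g\in\Z[x]$ decomposes as $g_+-g_-$ with $g_\pm\in P[x]$; once that is noted, your two-line argument is watertight.
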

\begin{proof}
Clearly, $F\subseteq S\cap F^{md}$. We need to show $F\supseteq S\cap F^{md}$. Suppose $S=P[x](\{\bu_1,\ldots,\bu_m\})$ and $F=P[x](\{\bu_1,\ldots,\bu_r\})$. If $\bw\in S\cap F^{md}$, we can write $\bw=\sum_{i=1}^mg_i\bu_i=\sum_{i=1}^rf_i\bu_r$, where $g_i\in P[x]$ and $f_i\in \Z[x]$. Assume $f_i=(f_i)_+-(f_i)_-, (f_i)_+,(f_i)_-\in P[x]$. Then we have $\sum_{i=1}^r(g_i+(f_i)_-)\bu_i+\sum_{i=r+1}^mg_i\bu_i=\sum_{i=1}^r(f_i)_+\bu_r\in F$. Since $F$ is a face of $S$, it follows $\sum_{i=r+1}^mg_i\bu_i\in F$ and if $g_i\neq 0$, then $\bu_i\in F$, $r+1\le i\le m$. Because $\bu_i\notin F, r+1\le i\le m$, then $g_i=0, r+1\le i\le m$. Thus $\bw=\sum_{i=1}^rg_i\bu_i\in F$.
\end{proof}
\begin{lemma}\label{tvf-lemma1}
Let $\Sigma=\{S_i\}_i$ be a fan. Then every $L\in L_{\Sigma}$ is also a fan.
\end{lemma}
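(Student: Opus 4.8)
The plan is to verify directly that the equivalence class $L$, viewed as the finite collection of affine $P[x]$-semimodules consisting of the faces it contains, satisfies the three compatibility conditions of Definition \ref{abs-1}. First I would set up the bookkeeping: write $L=\{F_i\}_{i\in I_L}$, where $I_L$ is a subset of the index set of $\Sigma$ and $F_i\preceq S_i$. This is legitimate because, by Lemma \ref{atv-lm1}, a face is recovered from its $\Z[x]$-lattice via $F=S\cap F^{md}$; hence two distinct faces of the same $S_i$ can never be $\sim$-equivalent (they would have equal $md$-lattices and so coincide), so each index carries at most one member of $L$. Each $F_i$ is itself an affine $P[x]$-semimodule in $\Z[x]^n$, so $L$ is a collection of the right type of object.

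Condition (a) is immediate: by the definition of $\sim$ all the $F_i$ share the same module $F_i^{md}=:N$, which plays the role of $M$. The heart of the proof is condition (b). Fix $i\ne j$ in $I_L$ and let $\bu\in S_i$ be the gluing vector from Definition \ref{abs-1}(b) for the pair $(S_i,S_j)$, so $-\bu\in S_j$ and $S_i+\Z[x](-\bu)=S_j+\Z[x](\bu)=:T$. Since $F_i\sim F_j$, the Remark following Lemma \ref{tvf-lemma} gives $\bu\in F_i^{md}=F_j^{md}=N$; together with $\bu\in S_i$ and Lemma \ref{atv-lm1} this yields $\bu\in S_i\cap N=F_i$, and similarly $-\bu\in F_j$. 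Now, since $\pm\bu\in N$, Lemma \ref{tvf-lemma} shows that both $F_i+\Z[x](-\bu)$ and $F_j+\Z[x](\bu)$ are faces of $T$, and each has $\Z[x]$-lattice $N+\Z[x](\bu)=N$. By Lemma \ref{atv-lm1} a face of $T$ is determined by its lattice, so both equal $T\cap N$; in particular $F_i+\Z[x](-\bu)=F_j+\Z[x](\bu)$, which is condition (b) for $L$ (with the same gluing vector $\bu$).

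Condition (c) is handled in the same way. For distinct $i,j,k\in I_L$ take the vectors $\bu,\bv,\bw$ produced by Definition \ref{abs-1}(c) for $\Sigma$; by the argument above they lie in $F_i,F_j,F_k$ respectively and serve as the gluing vectors for $L$. Writing $T'$ for the common value $S_i+\Z[x](-\bu)+\Z[x](\bw)=S_j+\Z[x](\bu)+\Z[x](-\bv)=S_k+\Z[x](\bv)+\Z[x](-\bw)$ guaranteed by Definition \ref{abs-1}(c), two applications of Lemma \ref{tvf-lemma} (using $\bu,\bv,\bw\in N$) show that $F_i+\Z[x](-\bu)+\Z[x](\bw)$, $F_j+\Z[x](\bu)+\Z[x](-\bv)$ and $F_k+\Z[x](\bv)+\Z[x](-\bw)$ are all faces of $T'$ with lattice $N$, hence all equal $T'\cap N$ by Lemma \ref{atv-lm1}. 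This is condition (c) for $L$, so $L$ is compatible and therefore a fan.

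The step I expect to be the real obstacle — or at least the one requiring the most care — is condition (b): one must see that the gluing datum $\bu$ of $\Sigma$ automatically lies inside the faces $F_i,F_j$ and continues to glue them, and that $F_i+\Z[x](-\bu)$ genuinely \emph{equals} $F_j+\Z[x](\bu)$ rather than merely sitting inside a common face. The mechanism that makes this work is the rigidity principle ``a face is cut out by its $\Z[x]$-lattice'' (Lemma \ref{atv-lm1}) together with the stability of faces under the localizations $+\,\Z[x](\bu)$ (Lemma \ref{tvf-lemma}); once (b) is established, (c) is a routine repetition, and the remaining bookkeeping — that distinct faces of a single $S_i$ are never identified — is again Lemma \ref{atv-lm1}.
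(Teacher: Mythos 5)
Your proof is correct and uses the same two key lemmas as the paper (Lemma~\ref{tvf-lemma} and Lemma~\ref{atv-lm1}), but organizes the core of the argument differently. For condition (b), the paper verifies the equality $F_i+\Z[x](-\bu)=F_j+\Z[x](\bu)$ by element-chasing: it takes $\ba\in F_i$, decomposes $\ba=\bb+g\bu$ with $\bb\in S_j$, and applies Lemma~\ref{atv-lm1} to conclude $\bb\in F_j$, giving one inclusion (the other follows by symmetry). You instead push Lemma~\ref{tvf-lemma} through to see that $F_i+\Z[x](-\bu)$ and $F_j+\Z[x](\bu)$ are \emph{both faces of the glued semimodule} $T=S_i+\Z[x](-\bu)=S_j+\Z[x](\bu)$ with the same lattice $N$, and then invoke Lemma~\ref{atv-lm1} once, at the structural level, in the form ``a face of $T$ is determined by its $\Z[x]$-lattice,'' to get the equality in one stroke. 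The same template handles (c) by two applications of Lemma~\ref{tvf-lemma}. Your route buys a cleaner, inclusion-free verification and makes the symmetry between $F_i$ and $F_j$ automatic; the paper's element-level argument is more explicit about how membership in $F_j$ is extracted. One small point worth keeping as you wrote it: you do explicitly check that the gluing vector $\bu$ lies in $F_i$ (via $\bu\in S_i\cap F_i^{md}=F_i$) and that $-\bu\in F_j$, which condition (b) of Definition~\ref{abs-1} literally requires; the paper's proof establishes this only implicitly through the Remark after Lemma~\ref{tvf-lemma}, so your more explicit treatment is a mild improvement in rigor.
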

\begin{proof}
We need to check that $L$ satisfies the three compatible conditions (a), (b), (c) in Definition \ref{abs-1}. For $F_i, F_j\in L$, assume $F_i\preceq S_i,F_j\preceq S_j$ and $S_i+\Z[x](-\bu)=S_j+\Z[x](\bu)$. Since $F_i\sim F_j$, there exists a face $F$ of $S_i+\Z[x](-\bu)$ such that $F_i^{md}=F_j^{md}=F^{md}$. So (a) is satisfied. Since $F$ is a face of $S_i+\Z[x](-\bu)$, we see $\bu\in F$ and $\bu\in F_i^{md}=F_j^{md}$. To prove (b), because of the symmetry, we just need to show $F_i+\Z[x](-\bu)\subseteq F_j+\Z[x](\bu)$, or $F_i\subseteq F_j+\Z[x](\bu)$. Since $F_i\subseteq S_i\subseteq S_j+\Z[x](\bu)$, for $\ba\in F_i$, we can write $\ba=\bb+g\bu$, where $\bb\in S_j, g\in \Z[x]$. Then $\bb=\ba-g\bu\in F_j^{md}\cap S_j$. Therefore, by Lemma \ref{atv-lm1}, $\bb\in F_j$. Hence $\ba\in F_j+\Z[x](\bu)$. So (b) is satisfied. To prove (c), suppose $F_i\sim F_j\sim F_k\in L$ and $F_i\preceq S_i,F_j\preceq S_j,F_k\preceq S_k$. Assume $\bu,\bv,\bw\in \Sigma^{md}$ such that $S_i+\Z[x](-\bu)+\Z[x](-\bv)=S_j+\Z[x](\bu)+\Z[x](-\bw)=S_k+\Z[x](\bv)+\Z[x](\bw)$. As above, $\bu,\bv,\bw\in F_i^{md}=F_j^{md}=F_k^{md}$. For the symmetry, we just need to prove $F_i+\Z[x](-\bu)+\Z[x](-\bv)\subseteq F_j+\Z[x](\bu)+\Z[x](-\bw)$, or $\Z[x](-\bv)\subseteq F_j+\Z[x](\bu)+\Z[x](-\bw)$. Suppose $g\bv\in \Z[x](-\bv)$. Since $g\bv\in S_j+\Z[x](\bu)+\Z[x](-\bw)$, we can write $g\bv=\bb+h_1\bu+h_2\bw$, where $\bb\in S_j, h_1,h_2\in\Z[x]$. Then $\bb=g\bv-h_1\bu-h_2\bw\in F_j^{md}\cap S_j$. Therefore, by Lemma \ref{atv-lm1} again, $\bb\in F_j$. Hence $g\bv\in F_j+\Z[x](\bu)+\Z[x](-\bw)$. So (c) is satisfied. Hence $L$ is a fan.
\end{proof}

Now we give the irreducible invariant $\p$-subvarieties-faces correspondence theorem.
\begin{theorem}\label{tvf-2}
Let $X_{\Sigma}$ be the toric $P[\sigma]$-variety associated with fan $\Sigma=\{S_i\}_i$ and assume $T_{\Sigma}$ is the $\D$-torus of $X_{\Sigma}$. Then there is a one-to-one correspondence between elements of $L_{\Sigma}$ and irreducible $T_{\Sigma}$-invariant $\p$-subvarieties of $X_{\Sigma}$. Let $L\in L_{\Sigma}$. If we denote the irreducible $T_{\Sigma}$-invariant $\p$-subvariety associated with $L$ by $D(L)$, then $D(L)\simeq X_L$ which is the toric $P[\sigma]$-variety associated with the fan $L$.
\end{theorem}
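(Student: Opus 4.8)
The plan is to transport the affine Orbit--Face correspondence of Theorem~\ref{oapv-thm1} through the gluing that defines $X_{\Sigma}$. Fix $L\in L_{\Sigma}$. By Lemma~\ref{atv-lm1} an equivalence class contains at most one face inside any single $S_i$, so $L$ singles out, for each index $i$ for which it has a representative, a unique face $F_i\preceq S_i$; I will write $I_L$ for this (nonempty) index set. For $i\in I_L$, Theorem~\ref{oapv-thm1} furnishes an irreducible $T_{\Sigma}$-invariant $\p$-subvariety $D(F_i)\subseteq X_i=\Spec^{\p}(k[S_i])$ with $\p$-coordinate ring $k[F_i]$. I will then define $D(L)$ by gluing the $D(F_i)$, $i\in I_L$, along the open subsets induced from the gluing data of $X_{\Sigma}$, and the work breaks into four steps: (i) check the gluing data of $X_{\Sigma}$ restricts compatibly to the $D(F_i)$; (ii) deduce $D(L)$ is an irreducible $T_{\Sigma}$-invariant $\p$-subvariety of $X_{\Sigma}$; (iii) identify $D(L)$ with $X_L$; (iv) prove $L\mapsto D(L)$ is a bijection onto all irreducible $T_{\Sigma}$-invariant $\p$-subvarieties.

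For step (i) I would take $i,j\in I_L$ and $\bu\in\Sigma^{md}$ with $S_i+\Z[x](\bu)=S_j+\Z[x](\bu)$ as in Definition~\ref{abs-1}(b). Since $F_i\sim F_j$ we have $\bu\in F_i^{md}=F_j^{md}$, hence $\bu\in S_i\cap F_i^{md}=F_i$ and $-\bu\in S_j\cap F_j^{md}=F_j$ by Lemma~\ref{atv-lm1}. Then $D(F_i)\cap (X_i)_{\chi^{\bu}}$ has $\p$-coordinate ring $k[F_i]_{\chi^{\bu}}=k[F_i+\Z[x](\bu)]$; by Lemma~\ref{tvf-lemma}, $F_i+\Z[x](\bu)$ is a face of $S_i+\Z[x](\bu)=S_j+\Z[x](\bu)$, and applying Lemma~\ref{atv-lm1} to this face gives $F_i+\Z[x](\bu)=(S_i+\Z[x](\bu))\cap F_i^{md}$. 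The same computation with $j$ in place of $i$ gives $F_j+\Z[x](\bu)=(S_j+\Z[x](\bu))\cap F_j^{md}$, and these coincide because $S_i+\Z[x](\bu)=S_j+\Z[x](\bu)$ and $F_i^{md}=F_j^{md}$. So $D(F_i)$ and $D(F_j)$ restrict to the same $\p$-subvariety of $(X_i)_{\chi^{\bu}}=(X_j)_{\chi^{-\bu}}$, and the triple-overlap identities come out the same way from Definition~\ref{abs-1}(c). Step (ii) is then immediate: $D(L)$ is closed and $T_{\Sigma}$-invariant on each chart, and irreducible since each $D(F_i)$ is (every $k[F_i]$ is a $\D$-domain) and all of them contain the common dense $\D$-torus $\Spec^{\p}(k[L^{md}])$, where $L^{md}:=F_i^{md}$.

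For step (iii) I will use Lemma~\ref{tvf-lemma1}, which says $L=\{F_i\}_{i\in I_L}$ is itself a fan; its affine pieces are the $D(F_i)=\Spec^{\p}(k[F_i])$, and --- as the computation in (i) records --- the vectors witnessing Definition~\ref{abs-1}(b),(c) for $L$ are the same $\bu,\bv,\bw$ already used for $\Sigma$, now lying in the $F_i^{md}$, so the gluing recipe for $X_L$ coincides with the one by which $D(L)$ was built; hence $D(L)\simeq X_L$. For step (iv) I would argue one chart at a time: given any irreducible $T_{\Sigma}$-invariant $\p$-subvariety $Y$, each $Y\cap X_i$ is an irreducible (possibly empty) $T_{\Sigma}$-invariant $\p$-subvariety of $X_i$, hence equals $D(F_i)$ for a unique face $F_i\preceq S_i$ whenever nonempty; since $Y$ is irreducible, any two nonempty $Y\cap X_i$ and $Y\cap X_j$ already meet in $Y\cap X_i\cap X_j$, and running the computation of (i) backwards forces $F_i\sim F_j$, so these faces form one class $L$ and $Y=D(L)$. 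Distinctness is similar: if $L\ne L'$ then either $I_L\ne I_{L'}$, in which case some $X_i$ meets exactly one of $D(L),D(L')$, or $I_L=I_{L'}$ and $F_i\ne F_i'$ for some $i$, in which case $D(L)\cap X_i\ne D(L')\cap X_i$ by the affine injectivity of Theorem~\ref{oapv-thm1}. The hardest part will be the bookkeeping concentrated in steps (i) and (iv): keeping track of which charts a given invariant subvariety meets, showing the nonempty restrictions are glued consistently (that they determine a single equivalence class, not merely a compatible family), and checking that nothing is lost on charts where $Y\cap X_i=\varnothing$.
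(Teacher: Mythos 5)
Your proposal follows the same route as the paper's proof: apply the affine correspondence of Theorem~\ref{oapv-thm1} chart by chart, check compatibility with the gluing data via Lemmas~\ref{tvf-lemma} and~\ref{atv-lm1}, and invoke Lemma~\ref{tvf-lemma1} to identify the glued subvariety with $X_L$. The only added wrinkle is your explicit handling of charts where $Y\cap X_i=\varnothing$ (which the paper's proof passes over silently); your concern there is resolvable exactly as you indicate, since if $F_j\preceq S_j$ were $\sim$-equivalent to some $F_i$ with $Y\cap X_i=D(F_i)\neq\varnothing$, then $\bu\in F_i$ would force $D(F_i)\cap(X_i)_{\chi^{\bu}}$ to be a nonempty subset of $Y\cap X_j$, a contradiction.
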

\begin{proof}
For an element $L=\{F_i\}_i$ of $L_{\Sigma}$, each $F_i\preceq S_i$ corresponds to an irreducible $T_{\Sigma}$-invariant $\p$-subvariety of $X_i=\Spec^{\p}(k[S_i])$. The gluing of $X_i$ along open subsets $(X_i)_{\chi^{\bu}}$ induces a gluing of $X_{F_i}=\Spec^{\p}(k[F_i])$ along open subsets $(X_{F_i})_{\chi^{\bu}}$. The resulted $\p$-variety is exactly $X_L$ since $L$ is a fan by Lemma \ref{tvf-lemma1}.

For the converse, suppose $Y$ is an irreducible $T_{\Sigma}$-invariant $\p$-subvariety of $X_{\Sigma}$. Then $Y\cap X_i$ is an irreducible $T_{\Sigma}$-invariant $\p$-subvariety of $X_i$, thus there exists a face $F_i$ of $S_i$ such that $Y\cap X_i=\Spec^{\p}(k[F_i])$. The gluing of $Y\cap X_i$ is induced by the gluing of $X_i$. Therefore if $S_i+\Z[x](-\bu)=S_j+\Z[x](\bu)$, then $F_i+\Z[x](-\bu)=F_j+\Z[x](\bu)$, and $\bu\in F_i^{md}=F_j^{md}$. Let $F=F_i+\Z[x](-\bu)$ which is a face of $S_i+\Z[x](-\bu)$ by Lemma \ref{tvf-lemma}. Obviously, $F^{md}=F_i^{md}=F_j^{md}$. Therefore, $F_i\sim F_j$. So $L=\{F_i\}_i$ is an element of $L_{\Sigma}$.
\end{proof}


\section{Divisors on Toric $P[\sigma]$-Varieties}
In algebraic geometry, the divisor theory is a very useful tool to study the properties of algebraic varieties. In this section, we will define divisors and divisor class modules for toric $P[\sigma]$-varieties by virtue of the irreducible invariant $\p$-subvarieties-faces correspondence. Moreover, we will establish connections between the properties of toric $P[\sigma]$-varieties and divisor class modules.

First let us give a description of faces of affine $P[x]$-semimodules using supporting hyperplanes. Let $S$ be an affine $P[x]$-semimodule and $M=S^{md}$. Let $N=\Hom_{\Z[x]}(M,\Z[x])$. Given $\varphi\in N, \varphi\neq0$, let
\[H_{\varphi}:=\{\bu\in M\mid \varphi(\bu)=0\}\subseteq M\]
which is called the {\em hyperplane} defined by $\varphi$ and
\[H_{\varphi}^+:=\{\bu\in M\mid \varphi(\bu)\geqslant0\}\subseteq M\]
which is called the {\em closed half-space} defined by $\varphi$. If $S\subseteq H_{\varphi}^+$, then $H_{\varphi}$ is called a {\em supporting hyperplane} of $S$ and $H_{\varphi}^+$ is called a {\em supporting half-space}.
\begin{prop}
Let $S$ be an affine $P[x]$-semimodule and $M=S^{md}$. Then $F$ is a proper face of $S$ if and only if there exists a $\varphi\in N=\Hom_{\Z[x]}(M,\Z[x])$ such that $H_{\varphi}^+$ is a supporting half-space of $S$ and $F=H_{\varphi}\cap S$.
\end{prop}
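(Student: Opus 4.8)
I would prove both implications; the ``only if'' direction is the substantive one.

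\emph{The ``if'' direction.} Suppose $\varphi\in N$ (so $\varphi\neq 0$, as $H_\varphi$ is only defined for such $\varphi$) with $S\subseteq H_\varphi^+$ and $F=H_\varphi\cap S$. First I would check $F$ is a $P[x]$-subsemimodule: for $\bu_1,\bu_2\in F$ one has $\bu_1+\bu_2\in S$ and $\varphi(\bu_1+\bu_2)=\varphi(\bu_1)+\varphi(\bu_2)=0$, and for $g\in P[x]$ one has $g\bu\in S$ and $\varphi(g\bu)=g\varphi(\bu)=0$. For face condition (1): if $\bu_1,\bu_2\in S$ with $\bu_1+\bu_2\in F$, then $\varphi(\bu_1)+\varphi(\bu_2)=0$ with both $\varphi(\bu_i)\ge 0$; since the order on $\Z[x]$ is total and translation invariant, a sum of two nonnegative elements vanishes only if each does, so $\bu_1,\bu_2\in H_\varphi\cap S=F$. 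For condition (2): if $g\bu\in F$ with $g\in P[x]^*$, $\bu\in S$, then $g\,\varphi(\bu)=0$, and since $\Z[x]$ is a domain, $\varphi(\bu)=0$, i.e.\ $\bu\in F$. Finally $F$ is proper, for if $F=S$ then $\varphi$ vanishes on $S$, hence on $S^{md}=M$, forcing $\varphi=0$.

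\emph{Reduction for the ``only if'' direction.} Let $F\preceq S$ be a proper face. By the remark following the definition of a face I may take $S=P[x](\{\bu_1,\dots,\bu_m\})$ and $F=P[x](\{\bu_1,\dots,\bu_r\})$ with $\bu_j\notin F$ for $j>r$, and properness gives $r<m$. It then suffices to produce $\varphi\in N$ with $\varphi(\bu_i)=0$ for $i\le r$ and $\varphi(\bu_j)>0$ for $j>r$: such a $\varphi$ is nonzero, and for $\bu=\sum_i g_i\bu_i\in S$ ($g_i\in P[x]$) one gets $\varphi(\bu)=\sum_{j>r}g_j\varphi(\bu_j)\ge 0$ since $P[x]$ is closed under addition and multiplication, so $S\subseteq H_\varphi^+$; moreover if $\varphi(\bu)=0$ then each summand $g_j\varphi(\bu_j)$ vanishes, whence $g_j=0$ for $j>r$ (domain) and $\bu\in F$, giving $H_\varphi\cap S=F$.

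\emph{Construction of $\varphi$.} Here I would use the isomorphism $N\simeq L^C$ from Lemma \ref{pi-le}, where $L=\Syz(\{\bu_1,\dots,\bu_m\})\subseteq\Z[x]^m$: a functional with prescribed values $\varphi(\bu_i)=v_i$ exists exactly when $\bv=(v_1,\dots,v_m)\in L^C$. Taking $\bv=(0,\dots,0,v_{r+1},\dots,v_m)$, membership $\bv\in L^C$ amounts to $(v_{r+1},\dots,v_m)$ being orthogonal to $L'$, the image of $L$ under the projection $\Z[x]^m\to\Z[x]^{m-r}$ onto the last $m-r$ coordinates; so I must find a vector in $(L')^C$ with all entries $>0$. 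The face hypothesis enters through the sublemma $L'\cap P[x]^{m-r}=\{\mathbf 0\}$: if some $\bw=(w_1,\dots,w_m)\in L$ projects to a nonzero nonnegative vector, then from $\sum_i w_i\bu_i=0$, moving the terms with $w_i<0$ (necessarily $i\le r$) to the right yields
\[\textstyle\sum_{j>r}w_j\bu_j+\sum_{i\le r,\,w_i<0}(-w_i)\bu_i=\sum_{i\le r,\,w_i\ge 0}w_i\bu_i\in F,\]
an identity in which every summand on the left lies in $S$; iterating face condition (1) forces each $w_j\bu_j\in F$, and then condition (2) forces $\bu_j\in F$ for every $j>r$ with $w_j\neq 0$, contradicting $\bu_j\notin F$. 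Granting the sublemma, tensoring with $\Q(x)$ gives $L'_{\Q(x)}\cap\Q(x)^{m-r}_{\ge 0}=\{\mathbf 0\}$ (clear denominators), and a Stiemke-type alternative produces a vector in $(L'_{\Q(x)})^{\perp}$ with all coordinates strictly positive; multiplying by a suitable positive element of $\Z[x]$ yields the required $\bv'\in(L')^C$.

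I expect the main obstacle to be that last step: the Stiemke (or Gordan) alternative is a convex-separation statement, so one must justify it over the non-Archimedean ordered field $\Q(x)$. The cleanest justification is that Fourier--Motzkin elimination — hence Farkas' lemma and its relatives — goes through over any ordered field; alternatively one can specialize $x$ to a large enough real number so that the finitely many sign conditions in play are preserved, invoke the classical statement over $\Q$, and lift the rational solution back. The remainder is bookkeeping with the order on $\Z[x]$ and the face axioms.
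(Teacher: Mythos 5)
Your ``if'' direction is essentially the paper's. For ``only if'' you take a genuinely different route. The paper works with $V=\{\varphi\in N\mid\varphi(\bu_i)=0,\ i\le r\}$, computes $\rank(V)=\rank(S)-\rank(F)>0$ (using that a proper face of an affine $P[x]$-semimodule has strictly smaller rank), extracts by a genericity/trapezoidal-form argument a $\varphi\in V$ with $\varphi(\bu_j)\neq 0$ for all $j>r$, and then argues that the $\varphi(\bu_j)$ share a sign by producing, from two values of opposite sign, an element of $H_\varphi\cap S$ which it then places in $F$ --- but that placement already presupposes $H_\varphi\cap S=F$, so the paper's sign-uniformity step is somewhat circular as written. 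Your proof instead translates the existence of the required $\varphi$ into finding a strictly positive element of $(L')^C$, where $L'$ is the projection of $L=\Syz(U)$ onto the last $m-r$ coordinates, isolates the correct combinatorial hypothesis $L'\cap P[x]^{m-r}=\{\mathbf{0}\}$, derives it from the face axioms, and invokes a Stiemke alternative over the ordered field $\Q(x)$. This cleanly sidesteps the sign-uniformity issue; the price is that one must justify the separation theorem over a non-Archimedean ordered field, which you correctly flag and which Fourier--Motzkin elimination supplies. One small transcription error: the displayed identity in your sublemma has the two index sets interchanged; it should read $\sum_{j>r}w_j\bu_j+\sum_{i\le r,\,w_i\ge 0}w_i\bu_i=\sum_{i\le r,\,w_i<0}(-w_i)\bu_i$, whose right-hand side is the one visibly in $F$; with that fix, iterating face condition (1) on the left-hand summands and then applying (2) yields the contradiction exactly as you intend.
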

\begin{proof}
``$\Rightarrow$''. Suppose $S=P[x](U)=P[x](\{\bu_1,\ldots,\bu_m\})\in\Z[x]^n$ and $\rank(S)=t$. Let $L=\Syz(U)$ and $\rank(L)=m-t$ by Lemma \ref{pzl-lemma2}. Then the map $\be_i\mapsto\bu_i$ gives the following exact sequence:
\[0\longrightarrow L\longrightarrow \Z[x]^m\longrightarrow M\longrightarrow0.\]
Without loss of generality, assume that $F=P[x](\{\bu_1,\ldots,\bu_r\})$ and $\rank(F)=s<t$. Let $V=\{\varphi\in N\mid F=H_{\varphi}\cap S\}$. If $A=\{\ba_1,\ldots,\ba_{m-t}\}$ is a basis of $L$ and regard $A$ as a matrix with columns $\ba_i$, then
$$N\simeq\{\varphi=(\varphi_1,\ldots,\varphi_m)^{\tau}\in \Z[x]^m\mid A\varphi=\mathbf{0}\}$$
and
$$V\simeq\{\varphi=(0,\ldots,0,\varphi_{r+1},\ldots,\varphi_m)^{\tau}\in \Z[x]^m\mid A\varphi=\mathbf{0}\}.$$
So $V$ is a free $\Z[x]$-module and $\rank(V)=(m-r)-((m-t)-(r-s))=t-s>0$ by Lemma \ref{pzl-lemma2}. We can assume that $A$ is a trapezoidal matrix. Because $F$ is a face, we can choose $\varphi\in V$ such that $\varphi(\bu_i)\neq 0, i=r+1,\ldots,m$. Then $F=H_{\varphi}\cap S$. We claim that $\varphi(\bu_i)$ have the same sign for $i=r+1,\ldots,m$. Otherwise, suppose that $\varphi(\bu_i)>0$ and $\varphi(\bu_j)<0$, $i,j\geqslant r+1$. Then $\varphi(\varphi(\bu_i)\bu_j-\varphi(\bu_j)\bu_i)=0$, thus $\varphi(\bu_i)\bu_j-\varphi(\bu_j)\bu_i\in F$ and it follows $\bu_i,\bu_j\in F$ which is a contradiction. Therefore, without loss of generality, we can assume that $\varphi(\bu_i)>0, i=r+1,\ldots,m$ and hence $H_{\varphi}^+$ is a supporting half-space of $S$.

``$\Leftarrow$''. Suppose $H_{\varphi}^+$ is a supporting half-space of $S$ and $F=H_{\varphi}\cap S$, where $\varphi\in N, \varphi\neq 0$. It is clear that $F$ is a $P[x]$-semimodule. Let $\bu_1,\bu_2\in S, g_1,g_2\in P[x]^*$ such that $g_1\bu_1+g_2\bu_2\in F$. Then $\varphi(g_1\bu_1+g_2\bu_2)=g_1\varphi(\bu_1)+g_2\varphi(\bu_2)=0$. Since $\bu_1,\bu_2\in S$, $\varphi(\bu_1)\ge0$ and $\varphi(\bu_2)\ge 0$. We must have $\varphi(\bu_1)=\varphi(\bu_2)=0$. It follows $\bu_1,\bu_2\in F$ and hence $F$ is a face as desired.
\end{proof}

Suppose $S$ is an affine $P[x]$-semimodule. If $F$ is a proper face of $S$, denote $V(F):=\{\varphi\in N\mid F=H_{\varphi}\cap S\}$. From the proof of the above proposition, we know that $V(F)$ is a free $\Z[x]$-module and $\rank(V(F))=\rank(S)-\rank(F)$. In particular, if $F$ is a facet of $S$, then $V(F)$ is a free $\Z[x]$-module of rank one which has a basis $\{\varphi\}$ with $S\subseteq H_{\varphi}^+$. In this case, we call $\varphi$ the {\em standard normal vector} of $F$.

Let $\Sigma=\{S_i\}_i$ be a fan and $M=\Sigma^{md}$. We will define a valuation on $k(M)$ for every facet of $S_i$. First consider the affine case. Let $S$ be an affine $P[x]$-semimodule and $F$ a facet of $S$. Assume $\varphi_F$ is the standard normal vector of $F$. Define a {\em valuation} $\nu_F$ on $k[S]$ as follows:
\[\nu_F\colon k[S]\to \Z[x], f=\sum_{\bu}\alpha_{\bu}\chi^{\bu}\mapsto \min\limits_{\bu}(\varphi_F(\bu)), f\in k[S].\]
Extend the valuation to $k(S):=\Frac(k[S])$ by defining $\nu_F(\frac{f}{g})=\nu_F(f)-\nu_F(g)$, for $\frac{f}{g}\in k(S)$. Note that for $\bu\in S, \nu_F(\chi^{\bu})=\varphi_F(\bu)$ and if $\bu\in F, \nu_F(\chi^{\bu})=0$, if $\bu\in S\backslash F, \nu_F(\chi^{\bu})>0$.

Now let $\Sigma=\{S_i\}_i$ be a fan in $\Z[x]^n$. For any $L\in L_{\Sigma}(1)$, if $L$ has only one element $F$, then $F$ is a facet of some $S_i$, and we define $\nu_L:=\nu_F$; if $L$ has more than one element, choose one for example $F_i$ and we define $\nu_L:=\nu_{F_i}$. In the latter case, for $F_i\sim F_j\in L$, suppose $F_i\preceq S_i, F_j\preceq S_j$ and $S_i+\Z[x](-\bu)=S_j+\Z[x](\bu)$. By the definition of the equivalence relationship $\sim$, there exists a facet $F$ of $S_i+\Z[x](-\bu)$ such that $F_i^{md}=F_j^{md}=F^{md}$. It follows that $F_i$, $F_j$ and $F$ have the same standard normal vector and hence $\nu_{F_i}=\nu_{F_j}=\nu_F$. So $\nu_L$ is independent of the choice of $F_i$.

Let $X_{\Sigma}$ be the toric $P[\sigma]$-variety associated with the fan $\Sigma$ and assume $T_{\Sigma}$ is the $\D$-torus of $X_{\Sigma}$. By Theorem \ref{tvf-2}, each $L\in L_{\Sigma}(1)$ corresponds to a $\sigma$-codimension one irreducible $T_{\Sigma}$-invariant $\p$-subvariety $D(L)$ of $X_{\Sigma}$, which is called a {\em prime divisor} of $X_{\Sigma}$.
\begin{definition}
$\Div(X_{\Sigma})$ is the free $\Z[x]$-module generated by the prime divisors of $X_{\Sigma}$ as a basis. A {\em Weil divisor} is an element of $\Div(X_{\Sigma})$ which is of the form $\sum_{L\in L_{\Sigma}(1)}a_LD_L$.
\end{definition}

Let $D=\sum_{L\in L_{\Sigma}(1)}a_LD_L$, then $D$ is said to be {\em effective}, written as $D\ge 0$, if $a_L\ge 0$ for all $L$.
\begin{definition}
Let $\Sigma=\{S_i\}_i$ be a fan and $X_{\Sigma}$ the toric $P[\sigma]$-variety associated with $\Sigma$. Assume $M=\Sigma^{md}$.
\begin{enumerate}
\item The {\em divisor} of $f\in k(M):=\Frac(k[M])$ is defined to be $\divsor(f)=\sum_L\nu_L(f)D_L$ where $L\in L_{\Sigma}(1)$.
\item A divisor of the form $\divsor(f)$ for some $f\in k(M)$ is called a {\em principal divisor}, and the set of all principal divisors is denoted by $\Div_0(X_{\Sigma})$.
\item For $\bu\in M$, $\divsor(\chi^{\bu})=\sum_{L\in L_{\Sigma}(1)}\varphi_L(\bu)D_L$ is called a {\em characteristic divisor}, and the set of all characteristic divisors is denoted by $\Div_c(X_{\Sigma})$.
\item Divisors $D$ and $E$ are said to be {\em linearly equivalent}, written $D\sim E$, if their difference is a principal divisor, i.e.\ $D-E=\divsor(f)\in \Div_0(X_{\Sigma})$ for some $f\in k(M)$.
\end{enumerate}
\end{definition}

If $f,g\in k(M)$, then $\divsor(fg)=\divsor(f)+\divsor(g)$ and $\divsor(f^a)=a\,\divsor(f)$, for $a\in \Z[x]$. Thus $\Div_0(X_{\Sigma})$ is a $\Z[x]$-submodule of $\Div(X_{\Sigma})$. Similarly, $\Div_c(X_{\Sigma})$ is also a $\Z[x]$-submodule of $\Div(X_{\Sigma})$.

If $D=\sum_ia_iD_i$ is a Weil divisor on $X_{\Sigma}$ and $U\subseteq X_{\Sigma}$ is a nonempty open subset, then $D|_U=\sum_{D_i\cap U\neq \emptyset}a_iD_i\cap U$ is called the {\em restriction} of $D$ on $U$.
\begin{definition}
A Weil divisor $D$ on a toric $P[\sigma]$-variety $X_{\Sigma}$ is {\em Cartier} if it is {\em locally characteristic}, meaning that $X_{\Sigma}$ has an open cover $\{U_i\}_{i\in I}$ such that $D|_{U_i}$ is characteristic on $U_i$ for every $i\in I$, namely there exists $\bu_i\in M$, such that $D|_{U_i}=\divsor(\chi^{\bu_i})|_{U_i}$ for $i\in I$, and we call $\{(U_i,\bu_i)\}_{i\in I}$ the {\em local data} for $D$.
\end{definition}

We can check that all of Cartier divisors on $X_{\Sigma}$ form a $\Z[x]$-module $\CDiv(X_{\Sigma})$ satisfying $\Div_c(X_{\Sigma})\subseteq \CDiv(X_{\Sigma})\subseteq \Div(X_{\Sigma})$.
\begin{definition}
Let $X_{\Sigma}$ be the toric $P[\sigma]$-variety of a fan $\Sigma=\{S_i\}_i$. Its {\em class module} is
\[\Cl(X_{\Sigma}):=\Div(X_{\Sigma})/\Div_c(X_{\Sigma})\]
and its {\em Picard module} is
\[\Pic(X_{\Sigma}):=\CDiv(X_{\Sigma})/\Div_c(X_{\Sigma}).\]
\end{definition}

Obviously, $\Pic(X_{\Sigma})\hookrightarrow\Cl(X_{\Sigma})$ as $\Z[x]$-modules.

\begin{example}
Let $U=\{\be_1,\ldots,\be_n\}$ be the standard basis of $\Z[x]^n$. Obviously, the toric $\p$-variety $X_U$ defined by $U$ is just the $\sigma$-affine space $\A^n$. The affine $P[x]$-semimodule $S=P[x](U)=P[x]^n$ has $n$ facets whose standard normal vectors are $\{\varphi_1,\ldots,\varphi_n\}$ with $\varphi_i(\be_i)=1, \varphi_i(\be_j)=0, j\neq i$, $1\le i,j\le n$. Suppose the corresponding prime divisors are $\{D_1,\ldots,D_n\}$ respectively. Then for each $i$, $\divsor(\chi^{\be_i})=D_i$. Thus $$\Cl(\A^n)=\Z[x]D_1\oplus\cdots\oplus\Z[x]D_n/(D_1,\ldots,D_n)=0.$$
\end{example}

\begin{example}
Suppose $U=\{\bu_1=(x,1),\bu_2=(x,2),\bu_3=(x,3)\}$ and $X_U$ is the affine toric $\p$-variety defined by $U$. $S=P[x](U)$ has two facets $F_1=P[x](\{\bu_1\})$ and $F_2=P[x](\{\bu_3\})$, and we denote their corresponding prime divisors by $D_1$ and $D_2$ respectively. The standard normal vector of $F_1$ is $\varphi_1$ with $\varphi_1(\bu_1)=0,\varphi_1(\bu_2)=1,\varphi_1(\bu_3)=2$. The standard normal vector of $F_2$ is $\varphi_2$ with $\varphi_2(\bu_1)=2,\varphi_1(\bu_2)=1,\varphi_1(\bu_3)=0$. So $\divsor(\chi^{\bu_1})=2D_2$, $\divsor(\chi^{\bu_2})=D_1+D_2$ and $\divsor(\chi^{\bu_3})=2D_1$. Thus
\[\Cl(X_U)=\Z[x]D_1\oplus\Z[x]D_2\oplus\Z[x]D_3/(2D_2,D_1+D_2,2D_1)\simeq \Z[x]/(2).\]
\end{example}
\begin{example}
Suppose $U=\{\bu_1=(x,1,1),\bu_2=(1,x,1),\bu_3=(1,1,x),\bu_4=(1,1,1)\}$ and $X_U$ is the affine toric $\p$-variety defined by $U$. $S=P[x](U)$ has three facets $F_1=P[x](\{\bu_2,\bu_3\})$ and $F_2=P[x](\{\bu_1,\bu_3\})$, $F_3=P[x](\{\bu_1,\bu_2\})$, and we denote their corresponding prime divisors by $D_1$, $D_2$ and $D_3$ respectively. The standard normal vector of $F_1$ is $\varphi_1$ with $\varphi_1(\bu_1)=x+2,\varphi_1(\bu_2)=\varphi(\bu_3)=0,\varphi_1(\bu_4)=1$. The standard normal vector of $F_2$ is $\varphi_2$ with $\varphi_2(\bu_1)=\varphi(\bu_3)=0,\varphi_1(\bu_2)=x+2,\varphi_1(\bu_4)=1$. The standard normal vector of $F_3$ is $\varphi_3$ with $\varphi_2(\bu_1)=\varphi(\bu_2)=0,\varphi_1(\bu_3)=x+2,\varphi_1(\bu_4)=1$. So $\divsor(\chi^{\bu_1})=(x+2)D_1$, $\divsor(\chi^{\bu_2})=(x+2)D_2$, $\divsor(\chi^{\bu_3})=(x+2)D_3$ and $\divsor(\chi^{\bu_4})=D_1+D_2+D_3$. Thus
\begin{align*}
\Cl(X_U)&=\bigoplus_{i=1}^4\Z[x]D_i/((x+2)D_1,(x+2)D_2,(x+2)D_3,D_1+D_2+D_3)\\
&\simeq \Z[x]/(x+2)\oplus\Z[x]/(x+2).
\end{align*}
\end{example}

\begin{example}
The $\D$-projective space $\PP^2$ is defined by $U=\{\mathbf{0},\be_1,\be_2\}$, where $\{\be_1,\be_2\}$ is the standard basis of $\Z[x]^2$. The affine $\p$-semimodules associated with $\PP^2$ are $S_1=P[x](\{\be_1,\be_2\})$, $S_2=P[x](\{-\be_1,\be_2-\be_1\})$ and $S_3=P[x](\{-\be_2,\be_1-\be_2\})$. $S_1$ has facets $F_1=P[x](\be_1)$ and $F_2=P[x](\be_2)$. $S_2$ has facets $F_3=P[x](-\be_1)$ and $F_4=P[x](\be_2-\be_1)$. $S_3$ has facets $F_5=P[x](-\be_2)$ and $F_6=P[x](\be_1-\be_2)$. $F_1\sim F_3$ have the standard normal vector $\varphi_1$ with $\varphi_1(\be_1)=0,\varphi_1(\be_2)=1$. $F_2\sim F_5$ have the standard normal vector $\varphi_2$ with $\varphi_2(\be_1)=1,\varphi_2(\be_2)=0$. $F_4\sim F_6$ have the standard normal vector $\varphi_3$ with $\varphi_3(\be_1)=-1,\varphi_3(\be_2)=-1$. Denote the corresponding prime divisors by $D_1,D_2,D_3$ respectively. Then $\divsor(\chi^{\be_1})=D_2-D_3$, $\divsor(\chi^{\be_2})=D_1-D_3$. Thus $$\Cl(\PP^2)=\Z[x]D_1\oplus\Z[x]D_2\oplus\Z[x]D_3/(D_2-D_3,D_1-D_3)\simeq\Z[x].$$
In the same way, we can show that $\Cl(\PP^n)\simeq \Z[x], n\geq 1$.
\end{example}

\begin{prop}\label{dtpv-prop}
Let $X=\Spec^{P[\sigma]}(k[S])$ be an affine toric $P[\sigma]$-variety. Then:
\begin{enumerate}
\item[(a)] Every Cartier divisor on $X$ is a characteristic divisor;
\item[(b)] $\Pic(X)=0$.
\end{enumerate}
\end{prop}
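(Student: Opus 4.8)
The plan is to establish (a) directly and to obtain (b) for free. Regard $X=\Spec^{\p}(k[S])$ as the toric $\p$-variety $X_\Sigma$ of the one-element fan $\Sigma=\{S\}$ (conditions (b), (c) in Definition~\ref{abs-1} are then vacuous); in the notation of Theorem~\ref{oapv-thm1}, the prime divisors of $X$ are exactly the $D(F)$ with $F$ a facet of $S$, and $\divsor(\chi^{\bu})=\sum_{F}\varphi_F(\bu)D(F)$, where $\varphi_F$ is the standard normal vector of $F$. Since $\Div_c(X)\subseteq\CDiv(X)$ always holds, statement (a) is precisely the assertion $\CDiv(X)=\Div_c(X)$, and then $\Pic(X)=\CDiv(X)/\Div_c(X)=0$ is (b). So it suffices to prove (a).

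The geometric heart of (a) is that all prime divisors of $X$ meet in one common nonempty invariant subvariety. The affine $P[x]$-semimodule $S$ has only finitely many faces and the intersection of two faces is a face, so $S$ has a unique smallest face $F_0$ (the intersection of all its faces); since $0\in F_0$ we have $k[F_0]\neq 0$ and hence $D(F_0)\neq\emptyset$. By the inclusion-preserving bijection of Theorem~\ref{oapv-thm1}, $F_0\subseteq F$ for every facet $F$ gives $D(F_0)\subseteq D(F)$. Consequently, any nonempty open $U\subseteq X$ with $U\cap D(F_0)\neq\emptyset$ automatically meets every prime divisor $D(F)$ of $X$.

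Now let $D=\sum_{F}a_F\,D(F)\in\CDiv(X)$ with local data $\{(U_i,\bu_i)\}_{i\in I}$, so $D|_{U_i}=\divsor(\chi^{\bu_i})|_{U_i}$. Choose $i_0$ with $U_{i_0}\cap D(F_0)\neq\emptyset$; by the previous paragraph $U_{i_0}$ meets every facet-divisor. As $X$ is irreducible, $U_{i_0}$ is dense, so the $D(F)\cap U_{i_0}$ are pairwise distinct prime divisors of $U_{i_0}$, and, the index running over all facets of $S$,
\[
\sum_{F}a_F\bigl(D(F)\cap U_{i_0}\bigr)=D|_{U_{i_0}}=\divsor(\chi^{\bu_{i_0}})|_{U_{i_0}}=\sum_{F}\varphi_F(\bu_{i_0})\bigl(D(F)\cap U_{i_0}\bigr).
\]
Comparing coefficients yields $a_F=\varphi_F(\bu_{i_0})$ for every facet $F$, hence $D=\sum_F\varphi_F(\bu_{i_0})D(F)=\divsor(\chi^{\bu_{i_0}})\in\Div_c(X)$. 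This proves (a), and (b) follows as above.

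The step I expect to require the most care is the ``common subvariety'' input: verifying that $S$ has a well-defined smallest face, that $D(F_0)$ is a nonempty closed subset (which reduces to $\Spec^{\p}$ of a nonzero affine $k$-$\p$-algebra being nonempty, using that $k[F_0]$ is a $\sigma$-domain on which $\sigma$ acts injectively, so its zero ideal is $\p$-prime), and that restriction to $U_{i_0}$ behaves as claimed, i.e.\ the $D(F)\cap U_{i_0}$ are genuinely distinct prime divisors of $U_{i_0}$ by density of $U_{i_0}$ in the irreducible space $X$. Granting these, the coefficient comparison and the passage to (b) are routine.
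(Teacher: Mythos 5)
Your proposal is correct and follows essentially the same route as the paper's proof: the paper takes $H$ to be the intersection of all faces of $S$ (your $F_0$), observes $D(H)\subseteq\bigcap_{F\in F_S(1)}D_F$, picks a point $p\in D(H)$, and uses the neighbourhood of $p$ where $D$ is characteristic to conclude $D=\divsor(\chi^{\bu})$. You supply two details the paper leaves implicit (nonemptiness of $D(F_0)$, and the coefficient comparison via density and distinctness of the $D(F)\cap U_{i_0}$), but the key idea — a common nonempty invariant subvariety lying in every prime divisor — is identical.
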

\begin{proof}
(a) Let $H$ be the intersection of all faces of $S$ which is still a face of $S$. Then $D(H)\subseteq \bigcap_{F\in F_S(1)}D_F$.
Fix a point $p\in D(H)$. Since $D$ is Cartier, it is locally characteristic, and in particular is characteristic in a neighbourhood $U$ of $p$, i.e.\ $D|_U=\divsor(\chi^{\bu})|_U$ for some $\bu\in S^{md}$. Since $p\in U\cap D_F$ for all $F\in F_S(1)$, $D=\divsor(\chi^{\bu})$.

(b) It follows from (a).
\end{proof}

An affine $P[x]$-semimodule $S$ is said to be {\em compact} if $\bigcap_{F\in F_S(1)}F^{md}=\{\mathbf{0}\}$.
\begin{prop}
Let $X_{\Sigma}$ be the toric $P[\sigma]$-variety of a fan $\Sigma$ and $M=\Sigma^{md}$. If $\Sigma$ contains a compact affine $P[x]$-semimodule, then $\Pic(X_{\Sigma})$ is a $\Z[x]$-lattice.
\end{prop}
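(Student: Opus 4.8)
The plan is to show that $\Pic(X_{\Sigma})$ is a finitely generated, torsion-free $\Z[x]$-module, so that it is a $\Z[x]$-lattice by Lemma \ref{pzl-lemma}. Since $\Sigma$ has finitely many affine $P[x]$-semimodules, each with finitely many faces, the set $L_{\Sigma}(1)$ is finite, so $\Div(X_{\Sigma})$ is a finitely generated free $\Z[x]$-module; as $\Z[x]$ is Noetherian, the submodule $\CDiv(X_{\Sigma})$ is finitely generated, and hence so is the quotient $\Pic(X_{\Sigma})=\CDiv(X_{\Sigma})/\Div_c(X_{\Sigma})$. Thus the whole content is torsion-freeness: I must show that if $D\in\CDiv(X_{\Sigma})$ and $gD=\divsor(\chi^{\bw})$ for some nonzero $g\in\Z[x]$ and some $\bw\in M$, then $D$ is itself a characteristic divisor.

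Let $S_{i_0}\in\Sigma$ be the compact affine $P[x]$-semimodule and $X_{i_0}=\Spec^{P[\sigma]}(k[S_{i_0}])$ the corresponding affine open piece of $X_{\Sigma}$; recall $S_{i_0}^{md}=M$. The restriction $D|_{X_{i_0}}$ is a Cartier divisor on the affine toric $P[\sigma]$-variety $X_{i_0}$, so by Proposition \ref{dtpv-prop}(a) it is characteristic, say $D|_{X_{i_0}}=\divsor(\chi^{\bu_0})|_{X_{i_0}}$ for some $\bu_0\in M$. Restricting $gD=\divsor(\chi^{\bw})$ to $X_{i_0}$ and using $\divsor(\chi^{g\bu_0})=g\,\divsor(\chi^{\bu_0})$, one obtains $\divsor(\chi^{g\bu_0-\bw})|_{X_{i_0}}=0$. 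Since the prime divisors of $X_{i_0}$ are exactly the $D_F$ for $F$ a facet of $S_{i_0}$ (Theorem \ref{oapv-thm1}) and $\divsor(\chi^{\bv})|_{X_{i_0}}=\sum_{F\in F_{S_{i_0}}(1)}\varphi_F(\bv)D_F$, where $\varphi_F$ is the standard normal vector of $F$, this says $\varphi_F(g\bu_0-\bw)=0$ for every facet $F$ of $S_{i_0}$; equivalently, $g\bu_0-\bw\in\bigcap_{F\in F_{S_{i_0}}(1)}\ker\varphi_F$.

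The key step, which I expect to be the main obstacle, is to deduce $\bigcap_{F\in F_{S_{i_0}}(1)}\ker\varphi_F=\{\mathbf{0}\}$ from the compactness hypothesis $\bigcap_{F\in F_{S_{i_0}}(1)}F^{md}=\{\mathbf{0}\}$. For each facet $F$ one has $F^{md}\subseteq\ker\varphi_F$ (since $F=H_{\varphi_F}\cap S_{i_0}$ and $\ker\varphi_F=H_{\varphi_F}$ is a $\Z[x]$-submodule), and moreover $\rank(F^{md})=\rank(F)=\rank(S_{i_0})-1=\rank(M)-1=\rank(\ker\varphi_F)$. Hence for any $\bv$ lying in the intersection of these kernels there is, for each facet $F$, a nonzero $g_F\in\Z[x]$ with $g_F\bv\in F^{md}$, so $(\prod_F g_F)\bv\in\bigcap_F F^{md}=\{\mathbf{0}\}$, and torsion-freeness of $M$ forces $\bv=\mathbf{0}$.

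Applying this to $\bv=g\bu_0-\bw$ gives $\bw=g\bu_0$, whence $g\big(D-\divsor(\chi^{\bu_0})\big)=gD-\divsor(\chi^{g\bu_0})=\divsor(\chi^{\bw})-\divsor(\chi^{\bw})=0$ in the free $\Z[x]$-module $\Div(X_{\Sigma})$; therefore $D=\divsor(\chi^{\bu_0})\in\Div_c(X_{\Sigma})$, as required. This shows $\Pic(X_{\Sigma})$ is torsion-free, and combined with finite generation and Lemma \ref{pzl-lemma} it follows that $\Pic(X_{\Sigma})$ is a $\Z[x]$-lattice.
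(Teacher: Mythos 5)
Your proof is correct and follows essentially the same route as the paper's: restrict the Cartier divisor $D$ to the affine piece $X_S$ of the compact $S\in\Sigma$, invoke Proposition \ref{dtpv-prop} to write $D|_{X_S}=\divsor(\chi^{\bu_0})|_{X_S}$, derive $\varphi_F(g\bu_0-\bw)=0$ for all facets $F$ of $S$, and then use compactness to force $g\bu_0=\bw$. Two points where you are slightly more careful than the paper: you explicitly verify that $\Pic(X_\Sigma)$ is finitely generated (needed to apply Lemma \ref{pzl-lemma}, which the paper uses but does not spell out), and you supply the rank argument ($F^{md}\subseteq\ker\varphi_F$ with equal rank $\rank(M)-1$, hence $\ker\varphi_F/F^{md}$ is torsion) to justify the existence of the scalars $g_F$ with $g_F(g\bu_0-\bw)\in F^{md}$, a step the paper asserts without comment.
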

\begin{proof}
Because of Lemma \ref{pzl-lemma}, it suffices to show that if $D$ is a Cartier divisor and $gD$ is the divisor of a character for some $g\in P[x]^*$, then the same is true for $D$. Write $D=\sum_La_LD_L$ and assume that $gD=\divsor(\chi^{\bu}), \bu\in M$. Let $S$ is a compact affine $P[x]$-semimodule in $\Sigma$. Since $D$ is Cartier, its restriction to $X_S:=\Spec^{\p}(k[S])$ is also Cartier. Assume that $D|_{X_S}=\sum_{F\in F_S(1)}a_FD_F$. This is characteristic on $X_S$ by the Proposition \ref{dtpv-prop}, so there is a $\bu'\in M$ such that $D|_{X_S}=\divsor(\chi^{\bu'})|_{X_S}$. This implies that $a_F=\varphi_F(\bu')$, for all $F\in F_S(1)$. On the other hand, $gD=\divsor(\chi^{\bu})$ implies that $ga_L=\varphi_L(\bu)$, for all $L\in L_{\Sigma}(1)$. It follows that $\varphi_F(g\bu')=ga_F=\varphi_F(\bu)$, for all $F\in F_S(1)$, i.e.\ $\varphi_F(g\bu'-\bu)=0$ for all $F\in F_S(1)$. So there exists $g_F\in P[x]^*$ such that $g_F(g\bu'-\bu)\in F^{md}$, for all $F\in F_S(1)$. Thus $(\prod_{F\in F_S(1)}g_F)(g\bu'-\bu)\in \bigcap_{F\in F_S(1)}F^{md}$. Since $S$ is compact, $\bigcap_{F\in F_S(1)}F^{md}=\{\mathbf{0}\}$ and hence $g\bu'-\bu=\mathbf{0}$. It follows that $D=\divsor(\chi^{\bu'})$.
\end{proof}

\begin{definition}
A toric $\p$-variety $X$ is said to be {\em smooth} if $\Cl(X)=\Pic(X)$.
\end{definition}
\begin{definition}
Suppose $S$ is an affine $P[x]$-semimodule and $M=S^{md}$. For a facet $F$ of $S$, assume $\varphi_F$ is the standard normal vector of $F$. If $\{\varphi_F\mid F\in F_S(1)\}$ forms a basis of the free module $N=\Hom_{\Z[x]}(M,\Z[x])$, we say $S$ is {\em smooth}. Let $\Sigma$ be a fan. If for every $S\in\Sigma$, $S$ is smooth, then we say $\Sigma$ is {\em smooth}.
\end{definition}

In the algebraic case, the smoothness of a toric variety is equivalent to the smoothness of the corresponding fan. We will generalize this result to the difference case. Firstly, let us prove some lemmas.
\begin{lemma}\label{dtpv-lemma}
Let $X_{\Sigma}$ be the toric $P[\sigma]$-variety of a fan $\Sigma$ and $Z=Spec^{\p}(k[S])$ for some $S\in \Sigma$. Let $D_1,\ldots,D_s$ be the irreducible components of $X\backslash Z$ that are prime divisors. Then the sequence
\[\sum_{j=1}^s\Z[x]D_j\longrightarrow\Cl(X)\longrightarrow\Cl(Z)\longrightarrow 0\]
is exact, where the first map sends $\sum_{j=1}^sa_jD_j$ to its divisor class in $\Cl(X)$ and the second is induced by restriction to $Z$.
\end{lemma}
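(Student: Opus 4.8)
The plan is to mimic the classical argument for the exact sequence of class groups attached to an open subvariety, replacing ``principal divisor'' by ``characteristic divisor'' throughout, and to deduce the statement from a short diagram chase once the behaviour of $\Div$ and $\Div_c$ under restriction to $Z$ is understood. Write $X=X_{\Sigma}$ and note that $Z=\Spec^{\p}(k[S])$ is one of the affine charts used to build $X_{\Sigma}$ from $\Sigma$, hence an open subvariety of $X$; restriction of Weil divisors gives a $\Z[x]$-module homomorphism $r\colon\Div(X)\to\Div(Z)$, and the assertion is that $r$ descends to a map $\bar r\colon\Cl(X)\to\Cl(Z)$ which is surjective with kernel the image of $K:=\bigoplus_{j=1}^{s}\Z[x]D_j$.

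First I would describe $r$ at the level of $\Div$. By Theorem \ref{tvf-2} the prime divisors of $X$ are the $D_L$ with $L\in L_{\Sigma}(1)$, and by Theorem \ref{oapv-thm1} the prime divisors of $Z$ are the $D_F$ with $F$ a facet of $S$. From the gluing construction, for $L\in L_{\Sigma}(1)$ one has $D_L\cap X_S=\Spec^{\p}(k[F])$ whenever $L$ has a representative face $F\preceq S$, and then $\rank F=\rank\Sigma-1=\rank S-1$, so $F$ is a facet of $S$ and $D_L\cap Z$ is a prime divisor of $Z$; moreover $L\mapsto D_L\cap Z$ is a bijection from $\{L\in L_{\Sigma}(1)\mid D_L\cap Z\neq\varnothing\}$ onto the prime divisors of $Z$, with inverse sending a facet $F$ to the class $[F]$. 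Consequently $r$ is surjective. A prime divisor $D_L$ lies in the kernel of $r$ exactly when $D_L\cap Z=\varnothing$, i.e.\ when $D_L\subseteq X\setminus Z$; since $T_{\Sigma}$ is irreducible and acts on the invariant closed set $X\setminus Z$, the irreducible components of $X\setminus Z$ are invariant, so the prime divisors among them are precisely these $D_L$, i.e.\ $D_1,\dots,D_s$. Hence $\ker r=K$ and $0\to K\to\Div(X)\stackrel{r}{\to}\Div(Z)\to0$ is exact.

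Next I would check that $r$ carries $\Div_c(X)$ onto $\Div_c(Z)$. This is where condition (a) of Definition \ref{abs-1} enters: $M:=\Sigma^{md}=S^{md}$, so $T_{\Sigma}=\Spec^{\p}(k[M])$ is the $\D$-torus of both $X$ and $Z$ and $k(M)$ is a common ambient field for the characters $\chi^{\bu}$, $\bu\in M$. For $L\in L_{\Sigma}(1)$ with representative facet $F\preceq S$, the valuation $\nu_L$ was defined as $\nu_F$, and $F$ shares its standard normal vector $\varphi_F$ with every other representative of $L$; hence the coefficient of $D_L$ in $\divsor(\chi^{\bu})$ is $\varphi_F(\bu)$, which is also the coefficient of $D_L\cap Z=D_F$ in the divisor of $\chi^{\bu}$ computed on $Z$. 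Therefore $\divsor(\chi^{\bu})|_Z=\sum_{F\in F_S(1)}\varphi_F(\bu)D_F$ is the characteristic divisor of $\chi^{\bu}$ on $Z$, for every $\bu\in M$; this shows at once that $r(\Div_c(X))\subseteq\Div_c(Z)$ and, since every generator of $\Div_c(Z)$ has this form, that equality holds.

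Finally I would run the diagram chase. Composing $r$ with the projection $\Div(Z)\twoheadrightarrow\Cl(Z)$ annihilates $\Div_c(X)$ because $r(\Div_c(X))=\Div_c(Z)$, so it factors through a surjection $\bar r\colon\Cl(X)\to\Cl(Z)$; this is exactness at $\Cl(Z)$ and also shows $\bar r$ is well defined. For exactness at $\Cl(X)$: the image of $K$ in $\Cl(X)$ is contained in $\ker\bar r$ since $r(K)=0$; conversely, given $D\in\Div(X)$ with $\bar r([D])=0$ we have $r(D)\in\Div_c(Z)$, so by the previous step $r(D)=r(E)$ for some $E\in\Div_c(X)$, whence $D-E\in\ker r=K$ and $[D]=[D-E]$ lies in the image of $K$. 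This establishes the exact sequence. I expect the only genuine work to be in the first two steps: matching prime divisors of $X$ with those of $Z$ via the gluing data, and verifying through condition (a) of the fan axioms that restriction preserves standard normal vectors, so that characteristic divisors restrict to characteristic divisors; granting that compatibility, exactness is purely formal.
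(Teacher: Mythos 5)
Your proposal is correct and follows essentially the same route as the paper's proof: both establish surjectivity by lifting prime divisors of $Z$ to $X$, use the short exact sequence $0\to\bigoplus_j\Z[x]D_j\to\Div(X)\xrightarrow{r}\Div(Z)\to0$, and prove exactness at $\Cl(X)$ by observing that $D|_Z=\divsor(\chi^{\bu})|_Z$ (with $\bu\in M=\Sigma^{md}=S^{md}$) forces $D-\divsor(\chi^{\bu})$ to be supported on $X\setminus Z$. You simply spell out the intermediate steps—that $r$ is surjective with kernel $K$, that the same $\varphi_F$ governs $\nu_L$ and $\nu_F$ so $r(\Div_c(X))=\Div_c(Z)$, and the resulting diagram chase—which the paper leaves more implicit.
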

\begin{proof}
Let $D'=\sum_ia_iD_i'\in \Cl(Z)$ with $D_i'$ a prime divisor in $Z$. Then the closure $\overline{D_i'}$ of $D_i'$ in $X$ is a prime divisor of $X$ and $D=\sum_ia_i\overline{D_i'}$ satisfies $D|_Z=D'$. Hence $\Cl(X)\rightarrow\Cl(Z)$ is surjective.

Since each $D_j$ restricts to $0$ in $\Div(Z)$, the composition of two maps is trivial. To prove the exactness, suppose that $[D]\in \Cl(X)$ restricts to $0$ in $\Cl(Z)$. This means that $D|_Z$ is the divisor of some $\chi^{\bu}\in k[S^{md}]=k[\Sigma^{md}]$, i.e.\ $D|_Z=\divsor(\chi^{\bu})|_Z$. This implies that $D-\divsor(\chi^{\bu})$ is supported on $X\backslash Z$, i.e.\ $D-\divsor(\chi^{\bu})\in \sum_{j=1}^s\Z[x]D_j$, So $[D]\in [\sum_{j=1}^s\Z[x]D_j]$ as desired.
\end{proof}

\begin{lemma}\label{dtpv-lm}
Let $M$ be a free $\Z[x]$-lattice and $N=\Hom_{\Z[x]}(M,\Z[x])$ its dual $\Z[x]$-lattice. For a subset $\{\varphi_1,\ldots,\varphi_s\}\subseteq N$, define a map
\[\Phi\colon \Z[x]^s\longrightarrow N,(a_1,\ldots,a_s)\mapsto\sum_{i=1}^sa_i\varphi_i.\]
The dual map of $\Phi$ is
\[\Phi^*\colon\Hom_{\Z[x]}(N,\Z[x])\simeq M\longrightarrow\Hom_{\Z[x]}(\Z[x]^s,\Z[x])\simeq \Z[x]^s\]
with $\Phi^*(\bm)\colon \Z[x]^s\rightarrow \Z[x],\ba\mapsto \Phi(\ba)(\bm)$, for every $\bm\in M$. Then $\Phi$ is an isomorphism if and only if $\Phi^*$ is an isomorphism.
\end{lemma}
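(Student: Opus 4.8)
The plan is to recognize the lemma as an instance of the general fact that, for finitely generated free modules over a commutative ring, a homomorphism is an isomorphism precisely when its transpose is, and to obtain it by pushing everything through the contravariant dualization functor $(-)^{\vee}:=\Hom_{\Z[x]}(-,\Z[x])$. The three modules in play are all free of finite rank: $\Z[x]^s$ trivially, $M$ by hypothesis, and $N=M^{\vee}$ by Lemma \ref{pi-le}. Consequently each of them is reflexive, i.e.\ the canonical evaluation map $\mathrm{ev}_P\colon P\to P^{\vee\vee}$, $p\mapsto(\psi\mapsto\psi(p))$, is an isomorphism. The first step is to match the maps in the statement with these functorial duals: under the canonical isomorphisms $\Hom_{\Z[x]}(\Z[x]^s,\Z[x])\simeq\Z[x]^s$ (dual basis) and $\Hom_{\Z[x]}(N,\Z[x])=M^{\vee\vee}\simeq M$ (via $\mathrm{ev}_M^{-1}$), the functorial dual $\Phi^{\vee}$ becomes exactly $\Phi^*$, because $\Phi^{\vee}(\mathrm{ev}_M(\bm))=\mathrm{ev}_M(\bm)\circ\Phi$ sends $\ba\in\Z[x]^s$ to $\Phi(\ba)(\bm)$, which is the defining formula for $\Phi^*(\bm)$; and naturality of $\mathrm{ev}$ identifies $\Phi^{\vee\vee}$ with $\Phi$ itself.

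With these identifications in hand, the equivalence is pure functoriality. If $\Phi$ is an isomorphism with inverse $\Psi$, then applying $(-)^{\vee}$ to $\Psi\circ\Phi=\mathrm{id}$ and $\Phi\circ\Psi=\mathrm{id}$ yields $\Phi^{\vee}\circ\Psi^{\vee}=\mathrm{id}$ and $\Psi^{\vee}\circ\Phi^{\vee}=\mathrm{id}$, so $\Phi^{\vee}$, hence $\Phi^*$, is an isomorphism. Conversely, if $\Phi^*$ (equivalently $\Phi^{\vee}$) is an isomorphism, then applying the same argument to the map $\Phi^{\vee}$ shows that $\Phi^{\vee\vee}$ is an isomorphism; since $\Phi^{\vee\vee}$ is identified with $\Phi$ by reflexivity, $\Phi$ is an isomorphism.

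I expect the only real work to be the bookkeeping in the first step: checking that the two ``canonical identification'' squares genuinely commute, i.e.\ that $\Phi^*$ is $\Phi^{\vee}$ transported along $\mathrm{ev}_M$, and that $\Phi^{\vee\vee}$ is $\Phi$ transported along $\mathrm{ev}_{\Z[x]^s}$ and $\mathrm{ev}_M$; both reduce to unwinding the definition of the evaluation map together with its naturality, with no substantive difficulty. If one prefers to avoid this abstract route, there is an equivalent computational argument: choose a $\Z[x]$-basis of $N$ (which exists by Lemma \ref{pi-le}) and the standard basis of $\Z[x]^s$, represent $\Phi$ by a matrix $A$ over $\Z[x]$ and $\Phi^*$ by the transpose $A^{\tau}$; then $\Phi$ is an isomorphism iff $A$ admits a two-sided inverse $B$ over $\Z[x]$ iff $B^{\tau}$ is a two-sided inverse of $A^{\tau}$ iff $\Phi^*$ is an isomorphism, where the implicit equality $\rank(N)=s$ needed in the ``only if'' direction follows by tensoring with $\Q(x)$.
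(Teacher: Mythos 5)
Your proposal is correct, but it takes a genuinely different route from the paper's proof, and in fact a cleaner one. The paper proves the implication ``$\Phi^*$ iso $\Rightarrow$ $\Phi$ iso'' directly: injectivity of $\Phi$ is extracted from surjectivity of $\Phi^*$ by the pairing $\Phi^*(\bm)(\ba)=\Phi(\ba)(\bm)$; surjectivity of $\Phi$ then requires a rank count ($\rank N=\rank M=s$) together with an explicit argument that the image $\Phi(\Z[x]^s)$ is $\Z[x]$-saturated in $N$, using the surjectivity of $\Phi^*$ to produce elements $\bm_i$ that ``invert'' a given relation $g\varphi=\Phi(\ba)$. Only then does the paper dispose of the converse in one line via $\Phi^{**}=\Phi$. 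You invert this structure: you first observe that the direction ``$\Phi$ iso $\Rightarrow$ $\Phi^*$ iso'' is trivial by contravariant functoriality of $\Hom_{\Z[x]}(-,\Z[x])$ (the dual of an isomorphism is an isomorphism, no hypotheses needed), and then obtain the nontrivial direction by applying this trivial fact to $\Phi^{\vee}$ and identifying $\Phi^{\vee\vee}$ with $\Phi$ via reflexivity of the finite-rank free modules $\Z[x]^s$ and $N=M^{\vee}$. This eliminates the saturation argument entirely; the cost, which you acknowledge, is the bookkeeping needed to verify that $\Phi^*$ really is the transport of $\Phi^{\vee}$ along $\mathrm{ev}_M$ and the dual-basis isomorphism, and that naturality of $\mathrm{ev}$ identifies $\Phi^{\vee\vee}$ with $\Phi$ --- both of which are routine. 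Your alternative matrix argument is a third valid route and is closest in spirit to the saturation-free abstract version, with the one caveat you already flag (one must first use a rank comparison over $\Q(x)$ to know the matrix is square before speaking of a two-sided inverse). All three approaches are sound; yours is the most economical.
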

\begin{proof}
Suppose $\Phi^*$ is an isomorphism. For any $\ba=(a_1,\ldots,a_s)\in \ker(\Phi)$, and $\bm\in M$, $\Phi^*(\bm)(\ba)=\Phi(\ba)(\bm)=0$, i.e., $\Phi^*(M)(\ba)=0$. Since $\Phi^*$ is surjective, we have $\Phi^*(M)=\Hom_{\Z[x]}(\Z[x]^s,\Z[x])$. So $\ba=\mathbf{0}$. Thus $\ker(\Phi)=\mathbf{0}$ and $\Phi$ is injective. Since $\Phi^*$ is an isomorphism, $\rank(N)=\rank(M)=s$. Therefore, $\{\varphi_1,\ldots,\varphi_s\}$ is linearly independent and generates $\Span_{\Q(x)}(N)$ as a basis. To prove $\Phi$ is also surjective, we only need to show $\Phi(\Z[x]^s)$ is $\Z[x]$-saturated, i.e., for any $g\in\Z[x]^*$ and $\varphi\in N$, if $g\varphi\in \Phi(\Z[x]^s)$, then $\varphi\in \Phi(\Z[x]^s)$. Assume $\ba=(a_1,\ldots,a_s)$ and $\Phi(\ba)=g\varphi$. Let $\{\be_i\}_{i=1}^s$ be the standard basis of $\Z[x]^s$. Since $\Phi^*$ is surjective, for every $i$, there exists an $\bm_i\in M$ such that $\Phi^*(\bm_i)(\be_i)=1, \Phi^*(\bm_i)(\be_j)=0, j\neq i$. Then $g\varphi(\bm_i)=\Phi(\ba)(\bm_i)=\sum_{i=1}^s\ba_i\varphi(\bm_i)=\ba_i$. Denote $\ba'=(\varphi(\bm_1),\ldots,\varphi(\bm_s))$. Then $g\Phi(\ba')=\Phi(\ba)=g\varphi$ and $\Phi(\ba')=\varphi$. Hence $\Phi$ is an isomorphism.

Since $\Phi^{**}=\Phi$, the converse follows easily.
\end{proof}

\begin{theorem}\label{dtpv-thm}
Let $X_{\Sigma}$ be the toric $P[\sigma]$-variety of a fan $\Sigma$. Assume $M=\Sigma^{md}$ is a free $\Z[x]$-module. Then $X_{\Sigma}$ is smooth if and only if $\Sigma$ is smooth.
\end{theorem}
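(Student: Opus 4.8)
The plan is to reduce the global statement for $X_\Sigma$ to the affine pieces $X_i = \Spec^{\p}(k[S_i])$, $S_i\in\Sigma$, and then compute $\Cl$ and $\Pic$ of those pieces. First I would recall the exact sequence of Lemma~\ref{dtpv-lemma}: for each $S\in\Sigma$ with $Z=\Spec^{\p}(k[S])$, the prime divisors $D_1,\dots,D_s$ of $X_\Sigma$ not meeting $Z$ generate a submodule of $\Cl(X_\Sigma)$ with $\Cl(X_\Sigma)/\langle D_1,\dots,D_s\rangle \cong \Cl(Z)$. By Proposition~\ref{dtpv-prop}, $\Pic(Z)=0$ and every Cartier divisor on $Z$ is characteristic; combining these over the affine cover will let me compare $\Cl$ and $\Pic$ of $X_\Sigma$ locally, and the equality $\Cl(X_\Sigma)=\Pic(X_\Sigma)$ is exactly the condition that every Weil divisor is Cartier, i.e.\ locally characteristic on each $X_i$.

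Next I would pin down what ``locally characteristic on $X_i$'' means in terms of the standard normal vectors. For $S=S_i$ with facets $F\in F_S(1)$ and standard normal vectors $\varphi_F\in N=\Hom_{\Z[x]}(M,\Z[x])$, the characteristic divisors on $X_i$ are the $\divsor(\chi^{\bu})|_{X_i}=\sum_{F\in F_S(1)}\varphi_F(\bu)D_F$, $\bu\in M$. So the restriction map $\Div(X_\Sigma)\to \Div(X_i)$ composed with $M\xrightarrow{\ \bu\mapsto \divsor(\chi^\bu)\ }\Div(X_i)$ is governed by the map $\Phi\colon \Z[x]^{F_S(1)}\to N$, $\Phi^*\colon M\to \Z[x]^{F_S(1)}$ dual to it in the sense of Lemma~\ref{dtpv-lm}. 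Smoothness of $S$ is precisely the statement that $\{\varphi_F\}_{F\in F_S(1)}$ is a $\Z[x]$-basis of $N$, i.e.\ that $\Phi$ is an isomorphism, which by Lemma~\ref{dtpv-lm} is equivalent to $\Phi^*$ being an isomorphism. Thus when $S$ is smooth, every Weil divisor on $X_i$ is characteristic (because $\Phi$ surjective forces every $\sum a_F D_F$ to be $\divsor(\chi^\bu)$ for a unique $\bu=(\Phi^*)^{-1}$-preimage), so that $\Div(X_i)=\Div_c(X_i)$, hence $\Cl(X_i)=0$; conversely, if some Weil divisor on $X_i$ is not characteristic, $\Phi$ fails to be surjective and $S$ is not smooth.

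The ``if'' direction then goes: assume every $S_i\in\Sigma$ is smooth. Take any Weil divisor $D$ on $X_\Sigma$; its restriction $D|_{X_i}$ is, by the previous paragraph, characteristic, say $D|_{X_i}=\divsor(\chi^{\bu_i})|_{X_i}$ for some $\bu_i\in M$. Hence $\{(X_i,\bu_i)\}$ is local data exhibiting $D$ as Cartier, so $\CDiv(X_\Sigma)=\Div(X_\Sigma)$ and therefore $\Cl(X_\Sigma)=\Pic(X_\Sigma)$; i.e.\ $X_\Sigma$ is smooth. For the ``only if'' direction: suppose some $S=S_i$ is not smooth, so $\{\varphi_F\}_{F\in F_S(1)}$ is not a basis of $N$; by Lemma~\ref{dtpv-lm} the map $\Phi$ is not an isomorphism. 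I would argue that then $\Cl(X_i)\neq 0$, using Lemma~\ref{dtpv-lemma} applied to $Z=X_i\subseteq X_\Sigma$ to produce a Weil divisor on $X_\Sigma$ whose restriction to $X_i$ is a nonzero, non-characteristic class, hence not Cartier; this gives $\Pic(X_\Sigma)\subsetneq \Cl(X_\Sigma)$, so $X_\Sigma$ is not smooth. The main obstacle I anticipate is the ``only if'' direction — specifically, making rigorous that non-smoothness of $S$ forces a genuine non-Cartier Weil divisor on the glued variety rather than something that could be repaired on overlaps. Concretely, one must show that $\Cl(X_i)\neq 0$ when $\Phi$ is not an isomorphism (distinguishing the cases $\Phi$ not surjective versus $\Phi$ not injective, the latter affecting $\rank$), and that a nonzero class in $\Cl(X_i)$ lifts to $\Cl(X_\Sigma)$ in a way that stays non-characteristic on $X_i$ and hence non-Cartier on $X_\Sigma$; the freeness hypothesis on $M$ is what guarantees $N$ is free of the right rank so that ``basis'' is the correct notion and Lemma~\ref{dtpv-lm} applies.
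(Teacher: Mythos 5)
Your proof uses exactly the paper's ingredients --- Lemma~\ref{dtpv-lemma}, Proposition~\ref{dtpv-prop}, and Lemma~\ref{dtpv-lm} --- and the same reduction to the affine pieces $X_i=\Spec^{\p}(k[S_i])$; the only organizational difference is that you prove ``$\Sigma$ smooth $\Rightarrow X_\Sigma$ smooth'' directly and the other direction by contrapositive, whereas the paper runs the direct argument the other way and notes that the steps reverse. Your ``if'' direction is fine: $S_i$ smooth makes $\Phi_i$ an isomorphism, hence $\Phi_i^*$ an isomorphism by Lemma~\ref{dtpv-lm} (note it is surjectivity of $\Phi_i^*$, not of $\Phi_i$, that produces a $\bu_i$ with $D|_{X_i}=\divsor(\chi^{\bu_i})|_{X_i}$), and then $\{(X_i,\bu_i)\}_i$ is local data, so every Weil divisor on $X_\Sigma$ is Cartier.

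The worry you flag in the ``only if'' direction is a non-issue. If $D$ is a Weil divisor on $X_\Sigma$ whose restriction to some $X_i$ is not characteristic, then $D$ cannot be Cartier: local data $\{(U_j,\bu_j)\}_j$ for $D$ on $X_\Sigma$ restricts to local data $\{(U_j\cap X_i,\bu_j)\}_j$ for $D|_{X_i}$ on $X_i$, so $D$ Cartier would force $D|_{X_i}$ Cartier, and then Proposition~\ref{dtpv-prop}(a) would force $D|_{X_i}$ characteristic --- a contradiction. There is nothing to ``repair on overlaps.'' What you should instead make explicit --- and what the paper does supply --- is the exactness on the left of
\[
0\longrightarrow M\stackrel{\theta}{\longrightarrow}\Div(X_i)\longrightarrow\Cl(X_i)\longrightarrow 0,
\]
i.e.\ that $\theta=\Phi_i^*$ is injective. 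This is what upgrades ``$\Phi_i$ not an isomorphism'' to ``$\Cl(X_i)\neq 0$'': by Lemma~\ref{dtpv-lm}, $\Phi_i$ not an isomorphism is equivalent to $\Phi_i^*$ not an isomorphism, and with injectivity given this is equivalent to $\Phi_i^*$ not surjective, i.e.\ $\Cl(X_i)\neq 0$. Without that injectivity the hypothetical case ``$\Phi_i^*$ surjective but not injective'' would have $\Cl(X_i)=0$ while $S_i$ is not smooth, and your contrapositive would stall. Once $\Cl(X_i)\neq 0$ is secured, the surjection $\Cl(X_\Sigma)\twoheadrightarrow\Cl(X_i)$ from Lemma~\ref{dtpv-lemma} lifts a non-characteristic class on $X_i$ to some $[D]\in\Cl(X_\Sigma)$, and the observation above shows $D\notin\CDiv(X_\Sigma)$, hence $\Pic(X_\Sigma)\subsetneq\Cl(X_\Sigma)$.
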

\begin{proof}
By definition, $X_{\Sigma}$ is smooth if and only if $\Cl(X)=\Pic(X)$ which is equivalent to the fact that every Weil divisor on $X_{\Sigma}$ is Cartier. For the necessity, assume every Weil divisor on $X_{\Sigma}$ is Cartier. By Lemma \ref{dtpv-lemma}, for any $S\in \Sigma$, every Weil divisor on $X_S:=\Spec^{\p}(k[S])$ is Cartier. Then by Proposition \ref{dtpv-prop}, $\Cl(X_S)=\Pic(X_S)=0$. Since we have the following exact sequence:
\[0\longrightarrow M\stackrel{\theta}\longrightarrow \Div(X_S)\longrightarrow \Cl(X_S)\longrightarrow 0,\]
where $\theta$ maps $\bu\in M$ to the divisor of $\chi^{\bu}$, we know that $\theta$ is an isomorphism. If $\{\varphi_F\mid F\in F_S(1)\}=\{\varphi_1,\ldots,\varphi_s\}$, this map becomes
\begin{equation}\label{dtpv-equ}
\theta\colon M\longrightarrow \Z[x]^s, \bm\mapsto (\varphi_1(\bm),\ldots,\varphi_s(\bm)), \forall \bm\in M.
\end{equation}
Now define $\Phi\colon \Z[x]^s\rightarrow N$ by $\Phi(a_1,\ldots,a_s)=\sum_{i=1}^sa_i\varphi_i$. The dual map
\[\Phi^*\colon M=\Hom_{\Z[x]}(N,Z[x])\longrightarrow\Hom_{\Z[x]}(\Z[x]^s,\Z[x])\simeq \Z[x]^s\]
is easily seen to be (\ref{dtpv-equ}). Since $\Phi^*$ is an isomorphism, $\Phi$ is an isomorphism by Lemma \ref{dtpv-lm}. The injectivity of $\Phi$ implies that $\{\varphi_1,\ldots,\varphi_s\}$ is linearly independent. The surjectivity of $\Phi$ implies that $\{\varphi_1,\ldots,\varphi_s\}$ generates $N$ as a $\Z[x]$-module. So $\{\varphi_1,\ldots,\varphi_s\}$ is a basis of $N$ and $S$ is smooth. Thus $\Sigma$ is smooth.

Every step in the above proof is invertible, so the sufficiency follows.
\end{proof}
\begin{example}
The $\D$-projective space $\PP^n$ is defined by $U=\{\mathbf{0},\be_1,\ldots,\be_n\}$, where $\{\be_i\}_{i=1}^n$ is the standard basis of $\Z[x]^n$. Let $S_0=P[x](\{\be_1,\ldots,\be_n\})$ and $S_i=P[x](U-\be_i),i=1,\ldots,n$. The fan associated with $\PP^n$ is $\{S_i\}_{i=0}^n$ by Example \ref{atv-ex1}. It is easy to check that $S_0=P[x](\{\be_1,\ldots,\be_n\})$ and $S_i=P[x](U-\be_i),i=1,\ldots,n$ are smooth. So $\PP^n$ is smooth by Theorem \ref{dtpv-thm} and $\Pic(\PP^n)=\Cl(\PP^n)\simeq \Z[x]$.
\end{example}

\section{Conclusions}
In this paper, we first introduce the concept of $\p$-varieties and initiate the study of toric $\p$-varieties. We define affine toric $\p$-varieties and establish connections between affine toric $p$-varieties and affine $P[x]$-semimodules. We show that the category of affine toric $\p$-varieties with toric morphisms is antiequivalent to the category of affine $P[x]$-semimodules with $P[x]$-semimodule morphisms. Moreover, we show that there is a one-to-one correspondence between irreducible $T$-invariant $\p$-subvarieties of an affine toric $\p$-variety $X$ and faces of the corresponding affine $P[x]$-semimodule, where $T$ is the $\sigma$-torus of $X$. Besides, there is also a one-to-one correspondence between $T$-orbits of the affine toric $\sigma$-variety $X$ and faces of the corresponding affine $P[x]$-semimodule.

We also define projective toric $\p$-varieties in a $\D$-projective space and define abstract toric $\p$-varieties associated with a fan by gluing affine toric $\p$-varieties. It turns out that both affine toric $\p$-varieties and projective toric $\p$-varieties are abstract toric $\p$-varieties. The irreducible invariant $\p$-subvarieties-faces correspondence is generalized to abstract toric $\p$-varieties. By virtue of the correspondence theorem, we can develop a divisor theory on abstract toric $\p$-varieties and establish connections between the properties of toric $P[\sigma]$-varieties and divisor class groups.

The divisor theory for toric $P[\sigma]$-varieties developed in this paper is not complete. In algebraic geometry, many applications of the divisor theory on algebraic varieties, in particular on toric varieties, are revealed. We hope that we can give more applications of the divisor theory on toric $P[\sigma]$-varieties in the future work.

\end{document}